\documentclass[a4paper,12pt,final]{amsart}
\usepackage{times,a4wide,mathrsfs,amssymb,amsmath,amsthm,xypic}

\newcommand{\C}{\mathbb{C}}
\newcommand{\ZZ}{\mathbb{Z}}

\newcommand{\LLL}{\mathbb{L}}
\newcommand{\QQ}{\mathbb{Q}}
\newcommand{\NN}{\mathbb{N}}
\newcommand{\PP}{\mathbb{P}}

\newcommand{\Sy}{\mathfrak S}

\newcommand{\ZZZ}{\mathcal Z}

\newcommand{\MM}{\mathcal M}

\newcommand{\FF}{\mathcal F}

\newcommand{\wt}{\widetilde}
\newcommand{\rom}{\romannumeral}

\newcommand{\llsvs}{\textup{LLSvS}}
\newcommand{\splitting}{decomposition}

\DeclareMathOperator{\aut}{Aut}

\DeclareMathOperator{\ide}{id}

\DeclareMathOperator{\ima}{Im}

\DeclareMathOperator{\sym}{Sym}
\DeclareMathOperator{\NS}{NS}
\newtheorem{theorem}{Theorem}[section]
\newtheorem{claim}[theorem]{Claim}
\newtheorem{lemma}[theorem]{Lemma}

\newtheorem{corollary}[theorem]{Corollary}
\newtheorem{proposition}[theorem]{Proposition}

\newtheorem{remark}[theorem]{Remark}
\newtheorem{definition}[theorem]{Definition}
\newtheorem{convention}{Conventions}

\newtheorem{nonumberingt}{Acknowledgements}

\begin{document}

\author[Chiara Camere]
{Chiara Camere}

\address{Chiara Camere, Dipartimento di Matematica "F. Enriques",
Universit\`a degli Studi di Milano, Via Cesare Saldini 50, 20133 Milano (MI) } 
\email{chiara.camere@unimi.it}

\author[Alberto Cattaneo]
{Alberto Cattaneo}

\address{Alberto Cattaneo, Mathematisches Institut and Hausdorff Center for Mathematics, Universit\"at Bonn, Endenicher Allee 60, 53115 Bonn, Germany. }
\email{cattaneo@math.uni-bonn.de}

\author[Robert Laterveer]
{Robert Laterveer}

\address{Robert Laterveer, Institut de Recherche Math\'ematique Avanc\'ee,
CNRS -- Universit\'e 
de Strasbourg,\
7 Rue Ren\'e Des\-car\-tes, 67084 Strasbourg CEDEX,
France.}
\email{robert.laterveer@math.unistra.fr}

\title[On the Chow ring of certain LLS\MakeLowercase{v}S eightfolds]{On the Chow ring of certain Lehn--Lehn--Sorger--van Straten eightfolds}

\begin{abstract} We consider a $10$-dimensional family of Lehn--Lehn--Sorger--van Straten hyperk\"ahler eightfolds which have a non-symplectic automorphism of order $3$. Using the theory of finite-dimensional motives, we show that the action of this automorphism on the Chow group of $0$-cycles is as predicted by the Bloch--Beilinson conjectures. We prove a similar statement for the anti-symplectic involution on varieties in this family. This has interesting consequences for the intersection product of the Chow ring of these varieties.
\end{abstract}

\keywords{Algebraic cycles, Chow group, motive, finite-dimensional motive, Bloch's conjecture, generalized Hodge conjecture, Bloch--Beilinson filtration, Beauville's ``splitting property'' conjecture, hyperk\"ahler varieties, Fano varieties of lines on cubic fourfolds, Lehn--Lehn--Sorger--van Straten eightfolds, non-symplectic automorphism}
\subjclass[2020]{Primary 14C15, 14C25, 14C30, 14J42.}

\maketitle

\section{Introduction} \label{sec: intro}

Let $X$ be a smooth complex projective variety. We denote by $A^i(X):=CH^i(X)_{\QQ}$ the Chow groups of $X$, i.e.\ the groups of codimension-$i$ algebraic cycles on $X$ with coefficients in $\QQ$, modulo rational equivalence.   
Chow groups encode a lot of the geometry of a variety, but their structure is far from being well understood: Bloch's conjecture predicts that on any smooth projective variety $X$ of dimension $n$, for any correspondence $\Gamma\in A^n(X\times X)$ such that $\Gamma_*=0$ on $H^{0,i}(X)$ for all $i>0$, then $\Gamma_*:A^n_{hom}(X)\rightarrow A^n(X)$ is trivial (see \cite{B}).

Hyperk\"ahler varieties, defined as smooth compact complex projective manifolds $X$ which are simply connected and carry a holomorphic symplectic two-form $\omega_X$ such that $H^{2,0}(X)=\C\omega_X$, are conjectured to have Chow groups with an even richer structure, by the ``(weak) splitting property conjecture'' of Beauville \cite{Beau3}.

The aim of this paper is to investigate Chow groups of some special Lehn--Lehn--Sorger--van Straten (or \llsvs{}) hyperk\"ahler eightfolds $Z(Y)$ (see \cite{LLSS}), and more precisely those associated to a cyclic cubic fourfold not containing a plane $Y\subset\PP^5$. The eightfold $Z(Y)$ and the $4$-dimensional Fano variety of lines $F(Y)$ are explicit projective models of hyperk\"ahler varieties. As Voisin observed in \cite{V14}, they are related by a rational map of degree six 
  \[ \psi: F(Y)\times F(Y)\dashrightarrow Z(Y)\ \]
  \noindent whose construction will be recalled in Subsection \ref{ss:psi}. 

As known from unpublished work of Lehn, Lehn, Sorger and van Straten, the \llsvs{} eightfold $Z(Y)$ is equipped with an anti-symplectic involution $\iota \in \aut(Z(Y))$ (see \cite{MLe}). This automorphism is induced, via the rational map $\psi: F(Y) \times F(Y) \dashrightarrow Z(Y)$, by the involution of $F(Y) \times F(Y)$ exchanging the two factors (Equation (\ref{rmk: compatibility involution})).  
  
The family of cyclic cubic fourfolds $Y : f(x_0,\ldots,x_4) + x_5^3=0$, with $f$ a homogeneous polynomial of degree three, is ten-dimensional and the very general element in it does not contain a plane, as observed in \cite[Remark 6.4]{CC} (and also in \cite[Proposition 6.5]{Pan}). Let $\sigma \in \text{PGL}(6)$ be the automorphism:
\begin{equation} \label{eq: sigma}
\sigma(x_0: \ldots: x_5) = (x_0: \ldots : x_4 : \xi x_5) \ ,
\end{equation}
\noindent where we set $\xi := \exp(\frac{2 \pi i}{3})$. For $Y$ cyclic we have $\sigma(Y) = Y$, therefore $\sigma\vert_{Y}$ is an automorphism of $Y$. By \cite[Section 6.1]{CC}, $\sigma$ induces a non-symplectic automorphism $g \in \textrm{Aut}(Z(Y))$ which is of order $3$. Moreover, if $\mathcal{Y}$ is the family of cyclic cubic fourfolds not containing a plane, then the deformation family of hyperk\"ahler eightfolds $\left\{ Z(Y) \mid Y \in \mathcal{Y} \right\}$ is still $10$-dimensional (see \cite[Theorem 6.8]{CC}).

We investigate the action of the two non-symplectic automorphisms $\iota$ and $g$ on the Chow group of $0$-cycles of hyperk\"ahler eightfolds in this $10$-dimensional family and we partially show that it is as predicted by the Bloch--Beilinson conjectures. In Sections \ref{section - Preliminaries} and \ref{ss:vois}, after recalling all the definitions and the constructions which we will use throughout the paper, we start our analysis, based on previous work by the third author on Chow groups of Fano varieties associated to cyclic cubic fourfolds \cite{nonsymp3}, together with fundamental work on the Chow ring of Fano varieties of cubic fourfolds by Shen and Vial \cite{SV}.  Using Voisin's rational map $\psi$ and finite-dimensionality of $F(Y)$ (in the sense of Kimura \cite{Kim}), we construct in Proposition \ref{prop:split} a \splitting{} 
  \[ A^8(Z(Y))=\bigoplus_j A^8_{(j)}(Z(Y))\] 
  induced by certain projectors. This \splitting{} is in fact also compatible with Voisin's descending filtration $S_*$ (see \cite{V14}), as shown in Corollary \ref{cor:split}.
  
In Section \ref{section - Main results} we prove the main results of the paper. The first one is as follows:

  \begin{theorem} \label{main}
   Let $Y\subset\PP^5$ be a cyclic cubic fourfold not containing a plane and let $Z=Z(Y)$ be the associated \llsvs{} eightfold. Let $g\in\aut(Z)$ be the automorphism of order $3$ induced by the automorphism (\ref{eq: sigma}) of $Y$. Then

  \[  g_\ast + g^2_\ast +g^3_\ast=3\ide\colon\ \ \   A^8_{(0)}(Z)  \ \to\ A^8_{}(Z)  \ ,\]   
  whereas
  \[  g_\ast + g^2_\ast +g^3_\ast=0\colon\ \ \   A^8_{(j)}(Z)  \ \to\ A^8_{}(Z)   \ \ \ \hbox{for}\ j\in\{2,4\}\ .\]   
  \end{theorem}
  
  The cases $j=6,8$ are still open because of the difficulty of constructing projectors which isolate the $g^*$-invariant part of $\sym^{\frac{j}{2}}H^2_{tr}(Z)$.
  
  The next result concerns the anti-symplectic involution present on any \llsvs{} eightfold:
   
   \begin{theorem} \label{main2}
    Let $Y\subset\PP^5$ be a Pfaffian cyclic cubic fourfold not containing a plane and let $Z=Z(Y)$ be the associated \llsvs{} eightfold. Then the involution $\iota\in\aut(Z)$ satisfies:
  \[ \iota_\ast=\ide\colon\ \ A^8_{(j)}(Z)\ \to\ A^8_{}(Z)\ \ \ \hbox{for}\ j\in\{0,4,8\}\ ,\]
  whereas
  \[  \iota_\ast =-\ide\colon\ \ \   A^8_{(j)}(Z)  \ \to\ A^8_{}(Z)   \ \ \ \hbox{for}\ j\in\{2,6\}\ .\]   
  \end{theorem}
  
  Theorem \ref{main2} is expected to hold true for {\em every\/} \llsvs{} eightfold; one problem in proving such a statement is that one would first need to make sense of the pieces $A^8_{(j)}(Z)$ (cf.\ Remark \ref{general}).

As a consequence of our main results, in Section \ref{section - Consequences} we describe some constant cycles subvarieties on a \llsvs{} eightfold $Z$.

Given a projective quotient variety, defined as the quotient $Q=X/G$ of a smooth projective variety $X$ by a finite subgroup $G \subset \aut(X)$, a result by Fulton \cite[Example 17.4.10]{F} implies that $Q$ inherits unaltered the formalism of correspondences (as remarked in \cite[Example 16.1.13]{F}). This allows us to consider motives $(Q,p,0)\in\MM_{\rm rat}$, where  $p\in A^n(Q\times Q)$ is a projector. Moreover, as a consequence of Manin's identity principle, in $\MM_{\rm rat}$ we have an isomorphism 
  $  h(Q)\cong h(X)^G:=(X,\Delta_G,0)$, where $\Delta_G:={1\over \vert G\vert}{\sum_{g\in G}}\Gamma_g$.

Another consequence of our main results are some corollaries concerning the intersection product in the Chow ring of the quotient varieties of $Z$.

\begin{corollary} \label{cor5} Let $Y\subset\PP^5$ be a Pfaffian cyclic cubic fourfold not containing a plane and let $Z=Z(Y)$ be the associated \llsvs{} eightfold. Let $Q:=Z/\langle g \rangle$ be the quotient. Then
   \[ \begin{split} &\ima \bigl( A^4(Q)\otimes A^2(Q)\otimes A^2(Q)\ \to\ A^8(Q)\bigr)\\
                     = & \ima \bigl( A^3(Q)\otimes A^3(Q)\otimes A^2(Q)\ \to\ A^8(Q)\bigr)\\
                      = & \ima \bigl( A^5(Q)\otimes A^2(Q)\otimes A^1(Q)\ \to\ A^8(Q)\bigr)\\
                      \cong & \QQ\ .\\
                     \end{split}\]
\end{corollary}   

 \begin{corollary} \label{cor6} Let $Y\subset\PP^5$ be a Pfaffian cyclic cubic fourfold not containing a plane and let $Z=Z(Y)$ be the associated \llsvs{} eightfold. For the anti-symplectic involution $\iota\in\aut(Z)$ define $T := Z/\langle\iota\rangle$. Then
   \[ \begin{split} 
                      & \ima \bigl( A^2(T)^{}\otimes    A^2(T)^{}\otimes  A^2(T)^{}\otimes A^2(T)^{}  \ \to\ A^8(T)\bigr)\\
                      = & \ima \bigl( A^3(T)^{}\otimes A^2(T)^{}\otimes A^2(T)^{}\otimes A^1(T)^{}\ \to\ A^8(T)\bigr)\\
                      \cong & \QQ\ .\\
                     \end{split}\]
\end{corollary}            
      
This is similar to the behaviour of the Chow ring of $K3$ surfaces \cite{BV} and of Calabi--Yau complete intersections \cite{V13}, \cite{LFu}. Presumably, Corollary \ref{cor5} holds for {\em all\/} cyclic cubics, and Corollary \ref{cor6} holds for {\em all\/} \llsvs{} eightfolds (this would follow from Beauville's ``splitting property conjecture'' \cite{Beau3}). It would be interesting to prove this. In our argument, we need the Pfaffian hypothesis in order to have a bigrading on the full Chow ring of $Z$ (cf.\ Remark \ref{pfaffian}), while we need the cyclic hypothesis in order to have a finite-dimensional motive.

Also, it would be interesting to prove our results for the eightfold of $K3^{[4]}$-type constructed by Ouchi \cite{Ouchi}, which extends the \llsvs{} construction to cubic fourfolds containing a plane.

 \vskip0.6cm

\begin{convention} Throughout the paper, the word \emph{variety} will be used to refer to a reduced, irreducible separated scheme of finite type over $\C$. A reduced subscheme of a variety is a \emph{subvariety} if all of its irreducible components have the same dimension. If $X$ is a variety, we define $H^j(X)$ to be its singular $j$-th cohomology group with rational coefficients. 

We will always consider Chow groups with coefficients in $\QQ$. The Chow group of cycles of dimension $j$ on $X$, with $\QQ$-coefficients, will be denoted by $A_j(X)$. If $X$ is a smooth $n$-dimensional variety, the notations $A_j(X)$ and $A^{n-j}(X)$ are equivalent. 

We define $A^j_{hom}(X)$, $A^j_{AJ}(X)$ to be the subgroups of homologically trivial, respectively Abel--Jacobi trivial, cycles of codimension $j$. For $\sigma\in\aut(X)$, we will use the notation $A^j(X)^\sigma$ (respectively $H^j(X)^\sigma$) for the subgroup of $A^j(X)$ (respectively $H^j(X)$) which is invariant with respect to the action of $\sigma^*$.

If $f\colon X\to Y$ is a morphism of varieties, we will denote by $\Gamma_f\in A_\ast(X\times Y)$ the graph of $f$.

The notation $\MM_{\rm rat}$ will refer to the (contravariant) category of Chow motives, i.e.\ pure motives with respect to rational equivalence (see \cite{MNP}, \cite{Sc}).

\end{convention}

\section{Preliminaries}\label{section - Preliminaries}

\subsection{Finite-dimensional motives}

We recall that the objects in $\MM_{\rm rat}$ are motives, i.e.\ pairs $M=(X,p)$ consisting of a smooth projective variety $X$ and of a projector $p\in A^*(X\times X)$ with $p^2=p$. Given $M=(X,p)$ and $N=(Y,q)$ in $\MM_{\rm rat}$, by definition ${\rm Hom}(M,N)=q\circ A^*(X\times Y)\circ p$, where composition is intended as the composition of correspondences: if $f\in A^*(X\times Y), g\in A^*(Y\times Z)$, then $g\circ f:=(p_{13})_*(p_{12}^*f\cdot p_{23}^*g)\in A^*(X\times Z)$. In particular, for a smooth projective variety $X$ we will consider the motive $h(X):=(X,\Delta)$ in $\MM_{\rm rat}$.

The definition of finite-dimensional motive, in the sense of Kimura, can be found in \cite{An}, \cite{J4}, \cite{Kim}, \cite{MNP}. 
For varieties with finite-dimensional motive, the following nilpotence theorem holds:

\begin{theorem}[Kimura \cite{Kim}]\label{nilp} Let $X$ be a smooth projective variety of dimension $n$ with finite-dimensional motive. Let $\Gamma\in A^n(X\times X)_{}$ be a correspondence which is numerically trivial. Then there is $N\in\NN$ such that
     \[ \Gamma^{\circ N}=0\ \in A^n(X\times X)_{}\ .\]
\end{theorem}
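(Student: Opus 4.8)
The plan is to prove this in the abstract framework of the $\QQ$-linear, pseudo-abelian, rigid symmetric monoidal category $\MM_{\rm rat}$, following Kimura. The input from the hypothesis is that $h(X)$, being finite-dimensional, admits a decomposition $h(X)=h^{+}\oplus h^{-}$ together with integers $p,q\geq 0$ such that $\wedge^{p+1}h^{+}=0$ and $\sym^{q+1}h^{-}=0$ in $\MM_{\rm rat}$. Throughout I identify $A^{n}(X\times X)=\operatorname{End}_{\MM_{\rm rat}}(h(X))$ and write $\operatorname{tr}$ for the categorical trace; for a correspondence $\Gamma$ one has $\operatorname{tr}(\Gamma)=\deg(\Gamma\cdot\Delta_{X})\in\QQ$. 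The two soft facts about numerical equivalence I will lean on are that a composite of a numerically trivial correspondence with any correspondence is again numerically trivial, and that the trace of a numerically trivial endomorphism of $h(X)$ (or of any direct summand thereof) vanishes, being the intersection number of a numerically trivial cycle with the diagonal.

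The heart of the matter is the following per-summand statement: if $M$ is an object of $\MM_{\rm rat}$ with $\wedge^{d}M=0$ and $f\in\operatorname{End}(M)$ satisfies $\operatorname{tr}(f^{k})=0$ for all $k\geq 1$, then $f^{\circ d}=0$; dually, the same conclusion holds if instead $\sym^{d}M=0$. I would prove this by a partial-trace computation: $\wedge^{d}M=0$ forces the antisymmetrizing idempotent on $M^{\otimes d}$ to vanish, i.e.\ $\sum_{\sigma\in S_{d}}\operatorname{sgn}(\sigma)\,\sigma=0$ in $\operatorname{End}(M^{\otimes d})$; composing this relation with $f^{\otimes d}$ and taking the partial trace over the last $d-1$ tensor factors yields, in $\operatorname{End}(M)$, an identity of the shape $\sum_{\sigma\in S_{d}}\operatorname{sgn}(\sigma)\,\bigl(\prod_{c}\operatorname{tr}(f^{|c|})\bigr)\,f^{\circ\ell(\sigma)}=0$, where the product runs over the cycles $c$ of $\sigma$ not containing $1$ and $\ell(\sigma)$ is the length of the cycle of $\sigma$ through $1$. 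Substituting $\operatorname{tr}(f^{k})=0$ annihilates every term except those coming from the $(d-1)!$ full $d$-cycles, each of which contributes $\pm f^{\circ d}$ with a common sign; hence $f^{\circ d}=0$. The symmetric version is identical, with the symmetrizing idempotent in place of the antisymmetrizing one.

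With this in hand the theorem follows. If $f\in\operatorname{End}(h^{+})$ is numerically trivial as a correspondence, then every power $f^{\circ k}$ is numerically trivial, so $\operatorname{tr}(f^{\circ k})=0$ for all $k$, and the per-summand statement gives $f^{\circ(p+1)}=0$; likewise a numerically trivial $f\in\operatorname{End}(h^{-})$ satisfies $f^{\circ(q+1)}=0$. Thus the ideal $\mathcal{N}(h^{\pm})\subset\operatorname{End}(h^{\pm})$ of numerically trivial endomorphisms consists of nilpotent elements of uniformly bounded index, so by the Nagata--Higman theorem (valid over $\QQ$) it is a nilpotent ideal: $\mathcal{N}(h^{+})^{\circ m_{+}}=0$ and $\mathcal{N}(h^{-})^{\circ m_{-}}=0$ for suitable $m_{\pm}$. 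Now write $p^{\pm}$ for the projectors cutting out $h^{\pm}$ and decompose $\Gamma=a+b+c+d$ with $a=p^{+}\Gamma p^{+}$, $d=p^{-}\Gamma p^{-}$, $b=p^{+}\Gamma p^{-}$, $c=p^{-}\Gamma p^{+}$; all four are numerically trivial since $\Gamma$ is, and $a,\,b\circ c\in\mathcal{N}(h^{+})$, $d,\,c\circ b\in\mathcal{N}(h^{-})$. Expanding $\Gamma^{\circ N}$ as a sum of composable words in the letters $a,b,c,d$ — a word being a walk on the two ``vertices'' $\{+,-\}$ with $a,d$ the loops and $b,c$ the connecting edges — a bounded bookkeeping argument shows that, once $N$ exceeds an explicit bound depending only on $p,q,m_{+},m_{-}$, every such word contains either $p+1$ consecutive letters $a$, or $q+1$ consecutive letters $d$, or a sub-block which is a product of $m_{+}$ elements of $\mathcal{N}(h^{+})$ or of $m_{-}$ elements of $\mathcal{N}(h^{-})$; in every case the word vanishes, so $\Gamma^{\circ N}=0$.

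The main obstacle is the per-summand statement, i.e.\ the partial-trace identity squeezed out of the vanishing (anti)symmetrizer: this is the one place where finite-dimensionality of the motive is genuinely used, and it is precisely what makes the conclusion fail for general (non-finite-dimensional) motives — note that semisimplicity of the category of numerical motives alone is not enough here, since $\operatorname{End}(h(X))$ is infinite-dimensional. A secondary technical nuisance is the passage through the decomposition $h(X)=h^{+}\oplus h^{-}$: because the two summands need not be numerically orthogonal a priori and because $b,c$ are not themselves endomorphisms, one cannot simply add up nilpotents, and a combinatorial input of Nagata--Higman type is needed to convert the per-summand nilpotence into nilpotence of $\Gamma$ itself.
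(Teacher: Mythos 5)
The paper itself gives no argument for this statement: Theorem \ref{nilp} is simply quoted from Kimura's paper, so there is no internal proof to compare against. Your proposal is, in substance, a correct reconstruction of the standard Kimura/Andr\'e--Kahn argument: the definition of finite-dimensionality via $h(X)=h^{+}\oplus h^{-}$ with $\wedge^{p+1}h^{+}=0$, $\sym^{q+1}h^{-}=0$ is the right input; the partial-trace identity extracted from the vanishing (anti)symmetrizer is exactly the mechanism that turns $\operatorname{tr}(f^{\circ k})=0$ into $f^{\circ(p+1)}=0$ (and the formula $\operatorname{tr}_{2,\dots,d}(\sigma\circ f^{\otimes d})=\bigl(\prod_{c\not\ni 1}\operatorname{tr}(f^{\circ|c|})\bigr)f^{\circ\ell(\sigma)}$ holds verbatim in the symmetric monoidal setting, with no extra signs in the odd case, since they are absorbed by the categorical trace); and the passage from ``nil of bounded index'' to nilpotence of the ideal of numerically trivial endomorphisms via Nagata--Higman is the standard way to handle the fact that $h^{\pm}$ need not be numerically orthogonal. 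One small streamlining of your final bookkeeping: you do not need the separate cases of $p+1$ consecutive $a$'s or $q+1$ consecutive $d$'s. In any composable word of length $N$ in $a,b,c,d$, look at the returns to the vertex $+$ (say): each segment between consecutive visits ($a$ itself, or $b\circ d^{\circ k}\circ c$) is a numerically trivial endomorphism of $h^{+}$, hence lies in $\mathcal{N}(h^{+})$; since a walk of length $N$ visits one of the two vertices at least $\lceil (N+1)/2\rceil$ times, taking $N\geq 2\max(m_{+},m_{-})+2$ forces every word to contain a product of $m_{+}$ elements of $\mathcal{N}(h^{+})$ or $m_{-}$ elements of $\mathcal{N}(h^{-})$, hence to vanish, so $\Gamma^{\circ N}=0$. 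With that (routine) step made explicit, the proof is complete.
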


We point out that the nilpotence property (for all powers of $X$) can be used to provide an alternative definition of finite-dimensional motive (see \cite[Corollary 3.9]{J4}). It is conjectured (cf.\ \cite{Kim}) that all varieties have finite-dimensional motive: even though this is still far from being proven, several non-trivial examples are known.

We will be using the following result from \cite[Theorem 3.1]{fam} and \cite[Theorem 4]{fano}:

\begin{theorem}\label{findim} Let $Y\subset\PP^5$ be a smooth cubic fourfold given by an equation
\[ f(x_0,\ldots,x_4) + x_5^3=0\ \]
\noindent (henceforth, we will call cubic fourfolds of this type {\em cyclic cubics}). Let $F=F(Y)$ be the Fano variety of lines on $Y$. Then $Y$ and $F$ have finite-dimensional motive.  
\end{theorem}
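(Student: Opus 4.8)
The plan is to handle the Fano variety $F$ by reducing it to the cubic fourfold $Y$ via the work of Shen--Vial, and to handle $Y$ itself by splitting off, with the help of the order-$3$ automorphism, a ``trivial'' part and a transcendental part --- the finite-dimensionality of the latter being the only step that is not a formal manipulation of correspondences, and for which I would appeal to \cite{nonsymp3}.

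\emph{From $Y$ to $F$.} First I would recall that Shen and Vial \cite{SV} relate the Chow motive of the Fano variety of lines on a smooth cubic fourfold to that of the fourfold itself: via the incidence correspondence between $F$ and $Y$, the motive $h(F)$ is a direct summand of a finite direct sum of Tate twists of $h(Y)$ and of $h(Y)\otimes h(Y)$ (in the spirit of the Galkin--Shinder formula expressing $[F]$ through $[Y]$ and $[\sym^{2}Y]$ in the Grothendieck ring of varieties). Since the class of finite-dimensional Chow motives is closed under direct sums, direct summands, tensor products and Tate twists (Kimura \cite{Kim}), $h(F)$ will be finite-dimensional as soon as $h(Y)$ is, so everything reduces to $Y$.

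\emph{Reducing $Y$ to its transcendental motive.} Let $\sigma\in\aut(Y)$ be the automorphism $[x_{0}:\cdots:x_{5}]\mapsto[x_{0}:\cdots:x_{4}:\zeta_{3}x_{5}]$, so that $\langle\sigma\rangle\cong\mu_{3}$ and projection from $[0:\cdots:0:1]$ exhibits $Y$ as the triple cover $\pi\colon Y\to\PP^{4}$ branched along the smooth cubic threefold $\{f=0\}$. Using the quotient-variety formalism recalled after Proposition \ref{quot}, the identification $Y/\langle\sigma\rangle\cong\PP^{4}$ yields a splitting $h(Y)=h(Y)^{\langle\sigma\rangle}\oplus M$ in $\MM_{\mathrm{rat}}$, with $h(Y)^{\langle\sigma\rangle}=h(\PP^{4})$ a sum of Tate motives (hence finite-dimensional) and $M=(Y,\Delta_{Y}-\Delta_{\langle\sigma\rangle},0)$. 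Next I would invoke the standard Chow--K\"unneth decomposition $h(Y)=\bigoplus_{i=0}^{8}h^{i}(Y)$ available for the hypersurface $Y$, in which $h^{i}(Y)$ is a Tate motive (or zero) for $i\neq4$ and $h^{4}(Y)=\LLL^{2}\oplus h^{4}(Y)_{\mathrm{prim}}$; since $\sigma$ fixes the hyperplane class, it acts as the identity on each $h^{i}(Y)$ with $i\neq4$ and on the $\LLL^{2}$-summand, so $M$ is a direct summand of $h^{4}(Y)_{\mathrm{prim}}$, and in fact equals the transcendental motive $h^{4}(Y)_{\mathrm{tr}}$ up to a finite-dimensional Tate summand. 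Everything therefore comes down to proving that $h^{4}(Y)_{\mathrm{tr}}$ is finite-dimensional, which is exactly what is established in \cite{nonsymp3}, and is where the assumption that $Y$ be a \emph{cyclic} cubic is genuinely used (as emphasized in the Introduction).

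\emph{Where the difficulty lies.} The hard part will be this last step. For Fano varieties of lines on, for instance, Pfaffian cubic fourfolds one can reduce finite-dimensionality to that of a $K3$ surface, and ultimately to abelian varieties and curves; but the transcendental motive of \emph{any} cubic fourfold has a realization that is a weight-$2$ Hodge structure of $K3$ type (with $h^{2,0}=1$, and in general not of abelian type), so its finite-dimensionality does not follow from the general formalism of finite-dimensional motives and must be obtained by an argument tied to the cyclic structure of $Y$. Carrying out that argument (following \cite{nonsymp3}) is, I expect, the real obstacle; the remaining steps above are formal.
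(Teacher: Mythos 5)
Your overall architecture --- reduce $F$ to $Y$ via a motivic relation coming from the incidence correspondence/Galkin--Shinder-type formula, and then invoke prior work for the cyclic cubic $Y$ itself --- is in substance the same as the paper's proof, which consists of exactly two citations: finite-dimensionality of $Y$ is \cite[Theorem 3.1]{fam}, and the implication ``$h(Y)$ finite-dimensional $\Rightarrow$ $h(F(Y))$ finite-dimensional'' is \cite[Theorem 4]{fano} (whose proof is precisely the Galkin--Shinder/de Cataldo--Migliorini argument you sketch, using Kimura's closure properties \cite{Kim}).

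The one genuine problem is where you send the reader for the only non-formal step. You attribute the finite-dimensionality of the (transcendental motive of the) cyclic cubic to \cite{nonsymp3}, but that paper is about the Fano fourfold $F(Y)$ with its order-$3$ automorphism and \emph{takes finite-dimensionality as an input}; it does not prove it. The correct source is \cite{fam}, where the cyclic hypothesis is used through the Katsura--Shioda inductive structure of hypersurfaces of the form $f(x_0,\ldots,x_4)+x_5^3=0$: such a $Y$ is dominated, after blow-ups and a finite quotient, by a product involving the cubic threefold $\{f=0\}\subset\PP^4$ and the Fermat cubic curve, and since the motive of a cubic threefold is of abelian type, $h(Y)$ is of abelian type, hence finite-dimensional. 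As it stands, your proof defers its crucial step to a reference that does not contain it, so the argument is incomplete until you replace \cite{nonsymp3} by \cite[Theorem 3.1]{fam} (or reproduce the Shioda-type argument). A minor remark: your intermediate reduction via the splitting $h(Y)=h(Y)^{\langle\sigma\rangle}\oplus M$ with $h(Y)^{\langle\sigma\rangle}\cong h(\PP^4)$ is harmless but plays no role in the cited proof --- note also that $M$ realizes all of primitive $H^4(Y)$, not only the transcendental part, since the $\sigma$-invariant cohomology is just that of $\PP^4$.
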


\subsection{CK decomposition}

\begin{definition}[Murre, Vial \cite{Mur}, \cite{V3}] \leavevmode

\noindent
(\rom1)
A smooth projective variety $X$ of dimension $n$ is said to have a {\em Chow--K\"unneth (CK) decomposition\/} if the diagonal in $A^n(X\times X)$ admits a decomposition
   \[ \Delta_X= \pi^0_X+ \pi^1_X+\cdots +\pi^{2n}_X\ \ \ \hbox{in}\ A^n(X\times X)\ , \]
  where the $\pi^i_X$'s are mutually orthogonal idempotents such that $(\pi^i_X)_\ast H^\ast(X)= H^i(X)$.

  \noindent
  (\rom2)
  A motive $M=(X,p,0)\in \MM_{\rm rat}$ has a {\em CK decomposition\/} if there is a decomposition
  \[ p=  \pi^0 + \pi^1 +\cdots + \pi^{2n}  \ \ \ \hbox{in}\ A^n(X\times X)\ , \]
  where the $\pi^i$'s are mutually orthogonal idempotents such that $(\pi^i)_\ast H^\ast(M)= H^i(M)$.  
  \end{definition}

\begin{remark} The existence of a CK decomposition for any smooth projective variety is part of Murre's conjectures \cite{Mur}, \cite{J4}. The extension of this notion from varieties to motives is made in \cite[Conjecture 2.1]{V3}.
\end{remark}  

The following provides a practical way of constructing CK decompositions.

\begin{proposition} \label{jannsen} Let $M=(X,p,0)\in \MM_{\rm rat}$. Assume that $M$ is finite-dimensional and that $X$ verifies the Lefschetz standard conjecture. Then $M$ admits a CK decomposition.
\end{proposition}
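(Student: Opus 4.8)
The plan is to use finite-dimensionality to lift a cohomological Künneth decomposition to a Chow-theoretic one, following the standard argument of Jannsen. First I would recall that the Lefschetz standard conjecture for $X$ guarantees the existence of a \emph{cohomological} Chow--Künneth decomposition, i.e.\ algebraic cycles $\bar\pi^0,\ldots,\bar\pi^{2n}\in A^n(X\times X)$ whose cohomology classes are the Künneth components of the class of the diagonal and which are mutually orthogonal idempotents \emph{modulo homological equivalence}. More precisely, for a motive $M=(X,p,0)$, one applies the Lefschetz conjecture to split $p_\ast H^\ast(X)=\bigoplus_i H^i(M)$ into its graded pieces by algebraic correspondences: the Künneth projectors of $X$ (which are algebraic by the standard conjecture) composed with $p$ on both sides yield cycles $\bar\pi^i\in A^n(X\times X)$ with $(\bar\pi^i)_\ast H^\ast(M)=H^i(M)$, summing to $p$ and orthogonal up to homological equivalence.

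Next I would invoke finite-dimensionality together with Kimura's nilpotence theorem (Theorem \ref{nilp}): since $M$ is finite-dimensional, any correspondence in $A^n(X\times X)$ that is numerically (equivalently, for these summands, homologically) trivial is nilpotent under composition. The differences $\bar\pi^i\circ\bar\pi^i-\bar\pi^i$, the cross-terms $\bar\pi^i\circ\bar\pi^j$ for $i\neq j$, and $\sum_i\bar\pi^i-p$ are all homologically trivial correspondences on $X$ (or on the summand cut out by $p$), hence nilpotent. A standard lemma in the theory of finite-dimensional motives — the ``lifting of idempotents and of orthogonal families of idempotents along a nilpotent ideal'' — then allows one to modify the $\bar\pi^i$ to genuine mutually orthogonal idempotents $\pi^0,\ldots,\pi^{2n}\in A^n(X\times X)$ with $\sum_i\pi^i=p$, without changing their classes in cohomology. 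Since cohomology classes are unchanged, the condition $(\pi^i)_\ast H^\ast(M)=H^i(M)$ persists, so $\{\pi^i\}$ is a CK decomposition of $M$ in the sense of the definition above.

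The main obstacle — and the only place where real content beyond bookkeeping enters — is the lifting of an \emph{orthogonal family} of mutually orthogonal quasi-idempotents through the nilpotent ideal $\mathcal{N}=\ker\!\big(A^n(X\times X)\to A^n_{\rm num}(X\times X)\big)$ of the ring $(A^n(X\times X),\circ)$ (restricted to $p\circ A^n(X\times X)\circ p$): one must be careful to lift them \emph{simultaneously} so that orthogonality is preserved, not just lift each idempotent separately. This is handled by the usual inductive argument: lift $\pi^0$ first by the classical Newton-type formula for idempotent lifting modulo a nil ideal, then work inside the corner ring $(1-\pi^0)\circ(\cdots)\circ(1-\pi^0)$ to lift $\pi^1$, and so on; at each stage the relevant ideal remains nilpotent because it is a subquotient of $\mathcal{N}$. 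All of this is due to Jannsen \cite{J2}; I would simply cite the relevant statement and indicate that the hypotheses (finite-dimensionality, Lefschetz standard conjecture to get the $\bar\pi^i$ algebraic) are exactly what make the argument run, remarking that everything extends verbatim from varieties to the direct summand $M=(X,p,0)$ thanks to the formalism of correspondences recalled above.
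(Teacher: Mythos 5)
Your proposal follows essentially the same route as the paper: the Lefschetz standard conjecture makes the K\"unneth components of $X$ (hence of $M$) algebraic, and finite-dimensionality (via the nilpotence of homologically trivial correspondences) allows one to lift this family of homologically orthogonal idempotents to genuinely orthogonal idempotents in $A^n(X\times X)$, exactly as in Jannsen's \cite[Lemma 5.4]{J2}, which is the reference the paper itself invokes. Your extra remarks on the simultaneous (inductive, corner-ring) lifting of the orthogonal family correctly spell out the content of that cited lemma rather than diverging from it.
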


\begin{proof} This follows from \cite[Lemma 5.4]{J2}.
\end{proof}

\subsection{MCK decomposition}

\begin{definition}[Shen--Vial \cite{SV}] Let $X$ be a smooth projective variety of dimension $n$ and $\Delta_X^{sm}\in A^{2n}(X\times X\times X)$ be the class of the small diagonal
  \[ \Delta_X^{sm}:=\bigl\{ (x,x,x)\ \vert\ x\in X\bigr\}\ \subset\ X\times X\times X\ .\] 
  
  A {\em multiplicative Chow--K\"unneth (MCK) decomposition\/} is a CK decomposition $\{\pi_X^i\}$ of $X$ which is {\em multiplicative\/}, i.e.\ it satisfies
  \[ \pi_X^k\circ \Delta_X^{sm}\circ (\pi_X^i\times \pi_X^j)=0\ \ \ \hbox{in}\ A^{2n}(X\times X\times X)\ \ \ \hbox{for\ all\ }i+j\not=k\ .\]

 A {\em weak MCK decomposition\/} is a CK decomposition $\{\pi_X^i\}$ of $X$ such that:
    \[ \Bigl(\pi_X^k\circ \Delta_X^{sm}\circ (\pi_X^i\times \pi_X^j)\Bigr){}_\ast (a\times b)=0 \ \ \ \hbox{for\ all\ } a,b\in A^\ast(X)\ .\]
  \end{definition}
  
  \begin{remark} As a correspondence from $X\times X$ to $X$, the small diagonal induces the following {\em multiplication morphism\/}:
    \[ \Delta_X^{sm}\colon\ \  h(X)\otimes h(X)\ \to\ h(X)\ \ \ \hbox{in}\ \MM_{\rm rat}\ .\]
 Suppose now that $X$ has a CK decomposition
  \[ h(X)=\bigoplus_{i=0}^{2n} h^i(X)\ \ \ \hbox{in}\ \MM_{\rm rat}\ .\]
  This decomposition is multiplicative if the composition
  \[ h^i(X)\otimes h^j(X)\ \to\ h(X)\otimes h(X)\ \xrightarrow{\Delta_X^{sm}}\ h(X)\ \ \ \hbox{in}\ \MM_{\rm rat}\]
  factors through $h^{i+j}(X)$, for any $i,j$.
  
  Assuming that $X$ has a weak MCK decomposition, if we set
    \[ A^i_{(j)}(X):= (\pi_X^{2i-j})_\ast A^i(X)\]
    we obtain a bigraded ring structure on the Chow ring, because the intersection product maps $A^i_{(j)}(X)\otimes A^{i^\prime}_{(j^\prime)}(X) $ to  $A^{i+i^\prime}_{(j+j^\prime)}(X)$.

If $X$ has an MCK decomposition, it is expected (yet not proven) that the following equalities hold:
\[ A^i_{(j)}(X)\stackrel{??}{=}0\ \ \ \hbox{for}\ j<0\ ,\ \ \ A^i_{(0)}(X)\cap A^i_{hom}(X)\stackrel{??}{=}0.\]
This is related to Murre's conjectures B and D, which were formulated for any CK decomposition \cite{Mur}.
      
  The property of having an MCK decomposition is severely restrictive, and it is closely related to Beauville's ``(weak) splitting property'' \cite{Beau3}. For a wider discussion, and examples of varieties with an MCK decomposition, we refer to \cite[Section 8]{SV}, as well as to  \cite{FTV}, \cite{LV}, \cite{SV2}, \cite{V6}.
    \end{remark}

In what follows, we will make use of the following result.

\begin{theorem}[Shen--Vial \cite{SV}]\label{sv} Let $Y\subset\PP^5(\C)$ be a smooth cubic fourfold and let $F:=F(Y)$ be the Fano variety of lines on $Y$. There exists a CK decomposition $\{\Pi_F^i\}$ for $F$ and 
  \[ (\Pi_F^{2i-j})_\ast A^i(F) = A^i_{(j)}(F)\ ,\]
  where the right-hand side denotes the \splitting{} of the Chow groups defined in terms of the Fourier transform as in \cite[Theorem 2]{SV}. Moreover, we have
  \[ A^i_{(j)}(F)=0\ \ \ \hbox{if\ }j<0\ \hbox{or\ }j>i\ \hbox{or\ } j\ \hbox{is\ odd}\ .\]
  
  For $Y$ very general, the Fourier decomposition $A^\ast_{(\ast)}(F)$ forms a bigraded ring, hence
  $\{\Pi_F^i\}$ is a weak MCK decomposition.
    \end{theorem}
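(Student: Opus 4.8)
The plan is to recall the construction of Shen and Vial \cite{SV} and to check that it yields the four assertions. The geometric heart of the matter is a single quadratic relation on $F\times F$; everything else is formal manipulation. Concretely, let $I\subset F\times F$ be the incidence correspondence $I=\{(\ell,\ell')\ :\ \ell\cap\ell'\neq\emptyset\}$ and let $l\in A^1(F)$ be the restriction of the Plücker polarization, with pullbacks $l_1,l_2\in A^1(F\times F)$. Shen--Vial introduce a cycle $L\in A^2(F\times F)$, built from $I$ and the polarization, whose cohomology class is ``primitive'' in the appropriate sense, and they establish an identity of the form
\[ L^2 = 2\,\Delta_F - \bigl(\text{an explicit polynomial in }l_1,l_2\text{ and }L\bigr)\quad\text{in }A^4(F\times F)\ , \]
proved by analysing the surface of lines of $Y$ that meet a fixed general line. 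I would take this relation as the black box on which the rest rests.

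Granting it, one writes down explicit cycles $\Pi_F^0,\Pi_F^2,\Pi_F^4,\Pi_F^6,\Pi_F^8\in A^4(F\times F)$ as polynomials in $L$, its transpose ${}^tL$, $l_1$ and $l_2$. Using the quadratic relation, one checks directly that these are mutually orthogonal idempotents with $(\Pi_F^i)_\ast H^\ast(F)=H^i(F)$, that transposing the two factors exchanges $\Pi_F^i$ with $\Pi_F^{8-i}$, and that they add up to $\Delta_F$ — the missing odd components vanishing because $F$, being of $K3^{[2]}$-type, has $H^{\mathrm{odd}}(F)=0$. This is the self-dual CK decomposition. Next, the Fourier transform $\mathcal{F}$ attached to $L$ (in the sense of \cite{SV}) acts on $A^\ast(F)$, and the summands $A^i_{(j)}(F)$ of \cite[Theorem 2]{SV} are by definition its eigenspace pieces inside $A^i(F)$; since $\Pi_F^{2i-j}$ and the $\mathcal{F}$-projector onto $A^i_{(j)}(F)$ are both polynomial expressions in the same data, the equality $(\Pi_F^{2i-j})_\ast A^i(F)=A^i_{(j)}(F)$ follows by comparing formulas.

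For the vanishing statements: when $j$ is odd, $\Pi_F^{2i-j}$ is an odd Künneth projector and is therefore zero (again by $H^{\mathrm{odd}}(F)=0$), so $A^i_{(j)}(F)=0$. The ranges $j<0$ and $j>i$ are obtained from Shen--Vial's explicit description of $A^i_{(j)}(F)$ in the extreme codimensions — the trivial case $A^0(F)=A^0_{(0)}(F)$, the analysis of $A^1(F)$ and $A^2(F)$, and that of $A^4(F)$, which they show is concentrated in degrees $j\le 4$ — together with the fact that intersecting with $l$ raises $i$ by $1$ while leaving $j$ fixed, and the action of $L$ shifts the indices in a controlled manner; propagating these through the ring $A^\ast(F)$ confines the bigrading to $0\le j\le i$.

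The last assertion, valid for $Y$ very general, is that the intersection product sends $A^i_{(j)}(F)\otimes A^{i'}_{(j')}(F)$ into $A^{i+i'}_{(j+j')}(F)$; equivalently, that the small diagonal $\Delta_F^{sm}\in A^8(F\times F\times F)$ is compatible with the projectors $\Pi_F^i$ in the weak sense of the MCK definition. Via the quadratic relation this reduces to a single cubic identity for $L$ in $A^\ast(F\times F\times F)$, which Shen--Vial verify through a careful study of $A^2(F)$ and of the relevant part of $A^2(F\times F)$ — and this is exactly where ``$Y$ very general'' is used, to force these groups to be as small as Hodge theory predicts. I expect this final step, multiplicativity, to be the only genuine obstacle: the construction of the projectors, their orthogonality and self-duality, the comparison with the Fourier decomposition, and the vanishing statements are all direct (though lengthy) consequences of the one quadratic relation for $L$.
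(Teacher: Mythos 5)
The paper's own proof of this statement is purely a citation: the existence of the self-dual CK decomposition, its agreement with the Fourier decomposition, and the vanishing ranges are all taken from \cite[Theorem 3.3]{SV} (with the vanishing for $j<0$ or $j>i$ read off from the fact that the $\Pi_F^i$ are shown there to satisfy Murre's conjecture B), and the bigraded-ring statement for very general $Y$ is \cite[Introduction, Theorem 3]{SV}. Your reconstruction of the Shen--Vial machinery (the cycle $L$ lifting the Beauville--Bogomolov class, the quadratic relation for $L$, explicit projectors built from $L$ and the polarization, agreement with the Fourier decomposition) is broadly faithful to what \cite{SV} do, and is acceptable precisely because the hard inputs are, as you say, black boxes from \cite{SV}; minor slips (e.g.\ $L$ is symmetric, so ${}^tL=L$, and the identification $(\Pi_F^{2i-j})_\ast A^i(F)=A^i_{(j)}(F)$ is substantially more work than ``comparing formulas'', though it is exactly \cite[Theorem 3.3]{SV}) do not affect the argument.

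One correction is needed, because it touches exactly the distinction the theorem is careful about. Shen--Vial do \emph{not} obtain multiplicativity for very general $Y$ by verifying a correspondence-level cubic identity for $L$ (equivalently, a compatibility of $\Delta_F^{sm}$ with the $\Pi_F^i$) in $A^\ast(F\times F\times F)$: such an identity would yield a genuine MCK decomposition, which is not known for $F(Y)$ -- this is why the theorem only claims a \emph{weak} MCK decomposition, and why Remark \ref{pity} records that even $A^2_{(0)}(F)\cdot A^2_{(0)}(F)\subset A^4_{(0)}(F)$ is open beyond the very general case. What \cite{SV} actually prove is the inclusion $A^i_{(j)}(F)\cdot A^{i'}_{(j')}(F)\subset A^{i+i'}_{(j+j')}(F)$ piece by piece on Chow groups, using the geometry of $F$ (notably Voisin's rational self-map and the incidence relations), with the very-generality hypothesis entering to control the relevant Hodge classes. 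So your final step should cite that piecewise verification (i.e.\ \cite[Theorem 3]{SV} itself) rather than a cycle-level identity on $F\times F\times F$; as written, it rests on a statement the reference does not contain.
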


\begin{proof} (A remark on notation: what we denote by $A^i_{(j)}(F)$ is denoted by $CH^i(F)_{(j)}$ in \cite{SV}).

The existence of a CK decomposition $\{\Pi_F^i\}$ is \cite[Theorem 3.3]{SV}, combined with the results of \cite[Section 3]{SV} to ensure that \cite[Theorem 2.2 and 2.4]{SV} are satisfied. According to \cite[Theorem 3.3]{SV}, the given CK decomposition agrees with the Fourier decomposition of the Chow groups. The ``moreover'' part is because the $\Pi_F^i$'s are shown to satisfy Murre's conjecture B \cite[Theorem 3.3]{SV}. The statement for very general cubic fourfolds is \cite[Introduction, Theorem 3]{SV}.
    \end{proof}

\begin{remark}\label{pity} Unfortunately, it is not yet known that the Fourier decomposition of \cite{SV} induces a bigraded ring structure on the Chow ring for {\em all\/} Fano varieties of smooth cubic fourfolds. For one thing, it has not yet been proven that $A^2_{(0)}(F)\cdot A^2_{(0)}(F)\subset A^4_{(0)}(F)$ (cf.\ \cite[Section 22.3]{SV} for discussion, and \cite[Proposition A.7]{FLV} for some recent progress).
\end{remark}

\begin{theorem}[Vial {\cite[Theorem 1]{V6}}]\label{K3m} Let $X$ be a hyperk\"ahler variety which is birational to a Hilbert scheme $S^{[m]}$, where $S$ is a $K3$ surface. Then $X$ has an MCK decomposition $\lbrace \pi_X^i\rbrace$.
\end{theorem}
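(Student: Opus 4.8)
The plan is to reduce the statement to the case $X=S^{[m]}$ and then to construct an MCK decomposition on the Hilbert scheme out of the canonical MCK decomposition of the $K3$ surface $S$, via de Cataldo--Migliorini's formula for $h(S^{[m]})$. For the reduction, I would use that birational hyperk\"ahler varieties have isomorphic Chow rings in a motivic sense: by Rie\ss{}, a birational map $X\dashrightarrow S^{[m]}$ of hyperk\"ahler varieties induces an isomorphism $h(X)\cong h(S^{[m]})$ in $\MM_{\rm rat}$ which is moreover compatible with the multiplication morphisms $\Delta^{sm}_\bullet$. Consequently an MCK decomposition of $S^{[m]}$ transports to one of $X$, and it suffices to treat $X=S^{[m]}$.

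Next I would set up the building blocks. A $K3$ surface $S$ carries a canonical MCK decomposition: writing $\oo_S\in A^2(S)$ for the Beauville--Voisin class, the projectors $\pi^0_S=\oo_S\times S$, $\pi^4_S=S\times\oo_S$, $\pi^2_S=\Delta_S-\pi^0_S-\pi^4_S$ are mutually orthogonal idempotents realizing the K\"unneth projections, and multiplicativity amounts precisely to the Beauville--Voisin relations $\ima\bigl(A^1(S)\otimes A^1(S)\to A^2(S)\bigr)=\QQ\,\oo_S$ and $c_2(S)\in\QQ\,\oo_S$. Since the MCK property is stable under products, $S^n$ carries the product MCK decomposition; this decomposition is $\Sy_n$-equivariant, so---using the formalism of correspondences on projective quotient varieties afforded by Proposition \ref{quot}---it descends to an MCK decomposition of $\sym^n S=S^n/\Sy_n$.

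Then I would assemble. By de Cataldo--Migliorini, the Hilbert--Chow morphism induces a canonical isomorphism in $\MM_{\rm rat}$
\[ h\bigl(S^{[m]}\bigr)\ \cong\ \bigoplus_{\lambda\,\vdash\, m} h\bigl(\sym^{\lambda} S\bigr)\bigl(\ell(\lambda)-m\bigr)\ , \]
a direct sum, over partitions $\lambda$ of $m$, of Tate twists of motives of products of symmetric powers of $S$ (here $\sym^{\lambda}S$ denotes the product of symmetric powers prescribed by $\lambda$ and $\ell(\lambda)$ its length), realized by explicit correspondences supported on nested Hilbert schemes. Each summand inherits an MCK decomposition from the previous paragraph (a Tate twist of a product of MCK decompositions), and summing these up yields a CK decomposition $\{\pi^i_{S^{[m]}}\}$ of $S^{[m]}$; that it is genuinely a CK decomposition follows because the de Cataldo--Migliorini isomorphism is strictly compatible with the cohomological grading.

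The hard part will be verifying that $\{\pi^i_{S^{[m]}}\}$ is \emph{multiplicative}, i.e.\ that $\pi^k_{S^{[m]}}\circ\Delta^{sm}_{S^{[m]}}\circ(\pi^i_{S^{[m]}}\times\pi^j_{S^{[m]}})=0$ for $i+j\ne k$. The strategy is to rewrite the small diagonal $\Delta^{sm}_{S^{[m]}}$, through the de Cataldo--Migliorini correspondences, in terms of the small diagonals of the symmetric powers $\sym^{\nu}S$ together with the combinatorics of merging/refining partitions, and then to invoke the description of the ring structure of $A^\ast(S^{[m]})$ (Lehn--Sorger, Voisin) to check that every term thereby produced---in particular the ``excess'' contributions supported on the strata where partitions degenerate---is of pure bidegree for the Beauville--Voisin grading on $S$. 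Keeping track of these excess terms and showing that they do not break the grading is the technical core of the argument. Once this is in place, the induced bigrading $A^i_{(j)}(S^{[m]})=(\pi^{2i-j}_{S^{[m]}})_\ast A^i(S^{[m]})$ is automatically a ring bigrading and the theorem follows for $S^{[m]}$; by the first paragraph the same conclusion holds for every $X$ birational to it.
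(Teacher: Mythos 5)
The paper does not actually prove this statement: Theorem \ref{K3m} is quoted from Vial \cite{V6}, so the only meaningful comparison is with Vial's own argument. Your outline reproduces its architecture correctly: reduction to $X=S^{[m]}$ via Rie\ss{} (birationality of hyperk\"ahler varieties giving an isomorphism of motives compatible with the multiplicative structure, so that the MCK property is a birational invariant), the Beauville--Voisin MCK decomposition of a $K3$ surface, stability of MCK under products and descent to symmetric powers as quotient varieties, and the de Cataldo--Migliorini decomposition of $h(S^{[m]})$ into Tate twists of motives of products of symmetric powers. All of these steps are sound as far as they go, though the asserted compatibility of Rie\ss's isomorphism with the small-diagonal (algebra) structure is itself a point that needs justification (it rests on the fact that a birational map between hyperk\"ahler varieties deforms to an isomorphism, so the relevant correspondence is a specialization of graphs of isomorphisms).

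The genuine gap is at exactly the point you label ``the technical core''. The de Cataldo--Migliorini isomorphism is an isomorphism of motives, not of algebra objects, so the product MCK decompositions of the summands do not automatically assemble into a \emph{multiplicative} CK decomposition of $S^{[m]}$: one must prove that the multiplication morphism of $h(S^{[m]})$, transported through the de Cataldo--Migliorini correspondences, is of pure grade zero for the Beauville--Voisin grading on powers of $S$. Your plan --- rewrite the small diagonal and check that all excess terms are of pure bidegree --- is a restatement of what has to be proved rather than an argument, and carrying it out is precisely the content of Vial's paper; it requires nontrivial Chow-theoretic inputs on powers of $S$, beginning with the Beauville--Voisin relation expressing the small diagonal of $S^3$ in terms of big diagonals and the canonical $0$-cycle, and, for arbitrary $m$, Voisin-type results guaranteeing that the correspondences coming from the nested Hilbert schemes lie in the grade-zero (tautological) part of the Chow rings of $S^k$. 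Note also that invoking the Lehn--Sorger description of the ring structure only controls the cohomology ring of $S^{[m]}$ and by itself gives no information about multiplicativity at the level of Chow groups. As written, therefore, the proposal is a correct road map that matches the known proof, but it does not constitute a proof of the multiplicativity statement, which is the heart of the theorem.
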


  \subsection{Refined CK decomposition}

In the following, we will use the term \emph{manifolds of $K3^{[m]}$--type} to refer to hyperk\"ahler manifolds of dimension $2m$ which are deformation equivalent to Hilbert schemes of $m$ points on a $K3$ surface.

\begin{theorem}[]\label{pi20} Let $X$ be a smooth projective hyperk\"ahler fourfold of $K3^{[2]}$--type. Assume that $X$ has finite-dimensional motive and let $\{\pi_X^i\}$ be a CK decomposition (which always exists under these hypotheses). Then there exists a further splitting
  \[ \pi_X^2 = \pi_X^{2,0} + \pi_X^{2,1}\ \ \ \hbox{in}\ A^4(X\times X)\ ,\]
  where $\pi_X^{2,0}$ and $\pi_X^{2,1}$ are orthogonal idempotents, and $\pi_X^{2,1}$ is supported on $C\times D\subset X\times X$, where $C$ and $D$ are a curve, resp.\ a divisor on $X$.
  The action on cohomology verifies
  \[ (\pi_X^{2,0})_\ast H^\ast(X) = H^2_{tr}(X)\ ,\]
  where $H^2_{tr}(X)\subset H^2(X)$ is defined as the orthogonal complement of $\NS(X)$ with respect to the Beauville--Bogomolov--Fujiki form. The action on Chow groups verifies
  \[  (\pi_X^{2,0})_\ast A^2(X) = (\pi_X^2)_\ast A^2_{}(X)\ .\]
  \end{theorem}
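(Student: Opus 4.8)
The plan is to realise $\pi_X^{2,1}$ as the projector that cuts out the algebraic part $\NS(X)_\QQ\subset H^2(X)$, built by hand out of divisor and $1$-cycle classes, and to check that it can be taken simultaneously to be a genuine sub-idempotent of the prescribed $\pi_X^2$ and to be supported on a product (curve)$\times$(divisor). I would start with the linear algebra. Choose divisors $F_1,\dots,F_\rho$ on $X$ whose classes form a basis of $\NS(X)_\QQ$, fix an ample divisor $H$ and, for each $\ell$, a curve $\Gamma_\ell$ in the linear system $|F_\ell\cdot H\cdot H|$, so that $[\Gamma_\ell]=[F_\ell]\cdot[H]^2$. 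By the Fujiki relation for the Beauville--Bogomolov--Fujiki form $q$ — namely $\int_X\alpha\cdot\beta\cdot H^2=c\bigl(q(H,H)q(\alpha,\beta)+2q(\alpha,H)q(\beta,H)\bigr)$ for $\alpha,\beta\in H^2(X)$ — and since the Gram matrix of $q$ on $\NS(X)_\QQ$ is non-degenerate and $q(H,H)\neq0$, the symmetric matrix $M:=\bigl(\int_XF_i\cdot F_\ell\cdot H^2\bigr)_{i,\ell}$ is invertible. Setting $C_i:=\sum_\ell(M^{-1})_{i\ell}\,[\Gamma_\ell]$, a $1$-dimensional algebraic cycle class on $X$ with rational coefficients, one gets $\int_XC_i\cdot F_j=\delta_{ij}$, and the endomorphism $p_0\colon\alpha\mapsto\sum_i\bigl(\textstyle\int_X\alpha\cdot C_i\bigr)[F_i]$ of $H^2(X)$ is the identity on $\NS(X)_\QQ$ and (by the Fujiki relation again, since $q(\alpha,F_i)=q(\alpha,H)=0$ for $\alpha\in H^2_{tr}(X)$) vanishes on $H^2_{tr}(X)$; so $p_0$ is precisely the $q$-orthogonal projection of $H^2(X)$ onto $\NS(X)_\QQ$, with kernel $H^2_{tr}(X)$.

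Next I would pass to motives. Since $X$ is simply connected, $A^1_{hom}(\PP^1\times X)=\Pic^0(\PP^1\times X)_\QQ=0$; consequently the homologically trivial correspondence $\pi_X^2\circ[\PP^1\times F_i]-[\PP^1\times F_i]\in A^1(\PP^1\times X)$ vanishes, which means that $[\PP^1\times F_i]$ is the cycle of a genuine morphism $\iota_i\colon\LL\to h^2(X)$ with $\pi_X^2\circ\iota_i=\iota_i$ — divisors sit inside $h^2(X)$ on the nose, for \emph{any} CK decomposition. Let $\rho_i\colon h^2(X)\to\LL$ be the morphism with cycle $[C_i\times\mathrm{pt}]\circ\pi_X^2$, so $\rho_i\circ\pi_X^2=\rho_i$. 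A direct computation with the composition law of correspondences, using $\pi_X^2\circ\iota_j=\iota_j$ together with $[C_i\times\mathrm{pt}]\circ[\PP^1\times F_j]=\bigl(\textstyle\int_XC_i\cdot F_j\bigr)\,\ide_\LL$, gives $\rho_i\circ\iota_j=\delta_{ij}\,\ide_\LL$. It follows that
\[ \pi_X^{2,1}\ :=\ \sum_{i=1}^\rho\iota_i\circ\rho_i\ \in\ A^4(X\times X) \]
is an idempotent, that $\pi_X^2\circ\pi_X^{2,1}=\pi_X^{2,1}=\pi_X^{2,1}\circ\pi_X^2$, and that on cohomology $(\pi_X^{2,1})_\ast$ is $p_0$ on $H^2(X)$ and $0$ on $H^k(X)$ for $k\neq2$. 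Putting $\pi_X^{2,0}:=\pi_X^2-\pi_X^{2,1}$, these relations force $\pi_X^{2,0}$ and $\pi_X^{2,1}$ to be orthogonal idempotents with $\pi_X^2=\pi_X^{2,0}+\pi_X^{2,1}$, which is the desired splitting.

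To identify the cycle $\pi_X^{2,1}$ I would use that $[\PP^1\times F_i]\circ[C_i\times\mathrm{pt}]=[C_i\times F_i]$ — the composition through $\LL$ of a ``$1$-cycle'' and a ``divisor'' correspondence factors through a point — so $\iota_i\circ\rho_i=[C_i\times F_i]\circ\pi_X^2$ and hence $\pi_X^{2,1}=\sum_i[C_i\times F_i]\circ\pi_X^2$. Now $[C_i\times F_i]\circ\pi_X^2$ is supported on $p_1\bigl(|\pi_X^2|\cap(X\times\Gamma)\bigr)\times F_i$, where $\Gamma:=\bigcup_\ell\Gamma_\ell$; the intersection $|\pi_X^2|\cap(X\times\Gamma)$ is proper, hence at most $1$-dimensional, because if $|\pi_X^2|\subseteq X\times\Gamma$ then $\pi_X^2$ would be supported on $X\times(\text{a curve})$ and would therefore act as $0$ on $H^2(X)$ for degree reasons, contradicting $(\pi_X^2)_\ast|_{H^2(X)}=\ide$. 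So $p_1\bigl(|\pi_X^2|\cap(X\times\Gamma)\bigr)$ is a curve, and $[C_i\times F_i]\circ\pi_X^2$ — a $4$-dimensional cycle supported on a $4$-dimensional product (curve)$\times$(divisor) — is a rational combination of products (curve)$\times$(divisor); summing over $i$, $\pi_X^{2,1}$ is supported on $C\times D$ for a suitable curve $C$ and divisor $D$. The two asserted actions then drop out. On cohomology, $(\pi_X^{2,0})_\ast=(\pi_X^2)_\ast-(\pi_X^{2,1})_\ast$ is the identity on $H^2(X)$ minus $p_0$ — i.e.\ the $q$-orthogonal projection onto $H^2_{tr}(X)$ — and it is $0$ on $H^k(X)$ for $k\neq2$, so $(\pi_X^{2,0})_\ast H^\ast(X)=H^2_{tr}(X)$. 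On Chow groups, for a curve class $\beta$ (of codimension $3$) and a divisor class $\delta$ one has $(\beta\times\delta)_\ast\alpha=\deg(\alpha\cdot\beta)\cdot\delta$ when $\alpha\in A^1(X)$ and $(\beta\times\delta)_\ast\alpha=0$ otherwise; since $\pi_X^{2,1}$ is a rational combination of such correspondences, $(\pi_X^{2,1})_\ast$ annihilates $A^i(X)$ for every $i\neq1$, in particular $(\pi_X^{2,1})_\ast A^2(X)=0$, and therefore $(\pi_X^{2,0})_\ast A^2(X)=(\pi_X^2)_\ast A^2(X)$.

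The step I would expect to require the most care is making the cohomological projector $p_0$ onto an \emph{honest} idempotent that is at the same time a sub-projector of the given $\pi_X^2$ and is carried by a product (curve)$\times$(divisor); the two facts that make it work are the vanishing $A^1_{hom}(\PP^1\times X)=0$ (which puts divisors exactly inside $h^2(X)$ for any CK decomposition) and the degree obstruction that prevents $\pi_X^2$ from being supported on (anything)$\times$(a curve) (which is what keeps the support under control after composing with $\pi_X^2$). Everything else — the Fujiki computation and the cohomological and Chow-theoretic consequences — is then formal. The hypothesis that $X$ has finite-dimensional motive enters only to guarantee, via Proposition~\ref{jannsen}, the existence of the CK decomposition $\{\pi_X^i\}$ that we start from.
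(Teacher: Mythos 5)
Your proposal takes a genuinely different route from the paper. The paper's proof is essentially a citation: $B(X)$ holds for fourfolds of $K3^{[2]}$-type by \cite{CM}, so finite-dimensionality gives a CK decomposition via Proposition \ref{jannsen}, and the splitting $\pi_X^2=\pi_X^{2,0}+\pi_X^{2,1}$ with the stated support and cohomological properties is obtained by feeding $X$ into Vial's niveau-filtration machinery \cite[Theorems 1 and 2]{V4}; the statement on $A^2$ then follows from the support of $\pi_X^{2,1}$ by the same dimension argument you use. You instead build $\pi_X^{2,1}$ by hand: the Fujiki relation makes the pairing $(F_i,F_\ell)\mapsto\int_X F_iF_\ell H^2$ nondegenerate on $\NS(X)_{}$, giving dual $1$-cycles $C_i$; the vanishing of $A^1_{hom}$ (simple connectedness) makes the divisor classes honest morphisms $\LLL\to h^2(X)$ for the \emph{given} CK decomposition; and $\pi_X^{2,1}:=\sum_i\iota_i\circ\rho_i=\sum_i[C_i\times F_i]\circ\pi_X^2$ is then an idempotent sub-projector of $\pi_X^2$ acting in cohomology as the $q$-orthogonal projection onto $\NS(X)$. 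This is more self-contained and produces exactly what the theorem asserts (Vial's theorems give a full refined CK decomposition, which is more than is needed here); the linear algebra, the computation $\rho_i\circ\iota_j=\delta_{ij}\,\ide_{\LLL}$, the idempotency and orthogonality relations, and the cohomological identification are all correct.

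There is, however, a genuine gap in your support argument. You claim that $|\pi_X^2|\cap(X\times\Gamma)$ is a proper intersection ``because otherwise $|\pi_X^2|\subseteq X\times\Gamma$''. That dichotomy is false: excess intersection only requires \emph{some} component of the intersection to exceed the expected dimension $1$, which can happen without any containment -- for instance a $4$-dimensional component of $|\pi_X^2|$ whose second projection has $2$-dimensional fibres over a point of $\Gamma$ meets $X\times\Gamma$ in dimension $\ge 2$. If $p_1\bigl(|\pi_X^2|\cap(X\times\Gamma)\bigr)$ had dimension $\ge 2$, the rest of your argument collapses: the composition would only be visibly supported on $(\hbox{surface})\times(\hbox{divisor})$, it need not be a combination of product cycles, and a correspondence supported on $(\hbox{surface})\times(\hbox{divisor})$ does \emph{not} in general act as zero on $A^2(X)$. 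The repair is standard: since $[C_i\times F_i]\circ\pi_X^2=(pr_{13})_\ast\bigl(pr_{12}^\ast(\pi_X^2\cdot p_2^\ast C_i)\cdot pr_3^\ast F_i\bigr)$ and the class $\pi_X^2\cdot p_2^\ast C_i\in A_1(X\times X)$ is one-dimensional, it can be represented by an actual $1$-cycle $B_i$; the intersection of $pr_{12}^\ast B_i$ with $pr_3^\ast F_i$ is then automatically proper, the composition is supported on $pr_1(|B_i|)\times|F_i|$, i.e.\ on a $(\hbox{curve})\times(\hbox{divisor})$, and your subsequent steps (decomposition of a $4$-cycle supported on a $4$-dimensional product into products of components, hence vanishing of $(\pi_X^{2,1})_\ast$ on $A^i(X)$ for $i\neq 1$) go through verbatim. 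With this fix the proof is complete; note also that, besides finite-dimensionality, the existence of the CK decomposition you start from uses the Lefschetz standard conjecture for $X$, known by \cite{CM}.
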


  \begin{proof} It is known that $X$ verifies the Lefschetz standard conjecture $B(X)$ (see \cite{CM}). Combined with finite-dimensionality, this implies the existence of a CK decomposition by Proposition \ref{jannsen}.
  
  For the ``moreover'' statement, one observes that $X$ verifies conditions (*) and (**) of Vial's \cite{V4}, and so \cite[Theorems 1 and 2]{V4} apply. This gives the existence of refined CK projectors $\pi_X^{i,j}$, which act on cohomology as projectors on gradeds for the ``niveau filtration'' $\wt{N}^\ast$ of loc.\ cit. In particular, $\pi_X^{2,1}$ acts as projector on $\NS(X)$, and $\pi_X^{2,0}$ acts as projector on $H^2_{tr}(X)$. The projector $\pi_X^{2,1}$, being supported on $C\times D$, acts trivially on $A^2(X)$ for dimension reasons: this proves the last equality.
   \end{proof}

\subsection{\llsvs{} eightfolds} \label{construction llsvs}
In this section we briefly recall the construction of the Lehn--Lehn--Sorger--van Straten eightfold (see \cite{LLSS} for additional details). Let $Y \subset \PP^5$ be a smooth cubic fourfold not containing a plane. Twisted cubic curves on $Y$ belong to an irreducible component $M_3(Y)$ of the Hilbert scheme $\textrm{Hilb}^{3m+1}(Y)$; in particular, $M_3(Y)$ is a ten-dimensional smooth projective variety and it is referred to as the Hilbert scheme of \emph{generalized twisted cubics} on $Y$. There exists a hyperk\"ahler eightfold $Z = Z(Y)$ and a morphism $u: M_3(Y) \to Z$ which factorizes as $u = \Phi \circ a$, where $a: M_3(Y) \to Z'$ is a $\PP^2$-bundle and $\Phi: Z' \to Z$ is the blow-up of the image of a Lagrangian embedding $j: Y \hookrightarrow Z$. By \cite{AL} (or alternatively \cite{CLehn}, or \cite[Section 5.4]{KLSV}), the manifold $Z$ is of $K3^{[4]}$-type.

For a point $j(y) \in j(Y) \subset Z$, a curve $C \in u^{-1}(j(y))$ is a singular cubic (with an embedded point in $y$), cut out on $Y$ by a plane tangent to $Y$ in $y$. In particular, $u^{-1}(j(Y)) \subset M_3(Y)$ is the locus of non-Cohen--Macaulay curves on $Y$. If instead we consider an element $C \in M_3(Y)$ such that $u(C) \notin j(Y)$, let $\Gamma_C := \langle C \rangle \cong \PP^3$ be the convex hull of the curve $C$ in $\PP^5$. Then, the fiber $u^{-1}(u(C))$ is one of the $72$ distinct linear systems of aCM twisted cubics on the smooth integral cubic surface $S_C := \Gamma_C \cap Y$ (cf.\ \cite{BK}). This two-dimensional family of curves, whose general element is smooth, is determined by the choice of a linear determinantal representation $[A]$ of the surface $S_C$ (see \cite[Chapter 9]{dolgachev}). This means that $A$ is a $3 \times 3$-matrix with entries in $H^0(\mathcal{O}_{\Gamma_C}(1))$ and such that $\det(A) = 0$ is an equation for $S_C$ in $\Gamma_C$. The orbit $[A]$ is taken with respect to the action of $\left(\textrm{GL}_3(\mathbb{C}) \times \textrm{GL}_3(\mathbb{
C})\right)/\Delta$, where $\Delta = \left\{ (tI_3,tI_3) \mid t \in \mathbb{C}^*\right\}$. Moreover, any curve $C' \in u^{-1}(u(C))$ is such that $I_{C'/S_C}$ is generated by the three minors of a $3 \times 2$-matrix $A_{0}$ of rank two, whose columns are in the $\mathbb{C}$-linear span of the columns of $A$. 

For a cyclic cubic fourfold $Y$ not containing a plane, we now explain the action of the automorphism $g \in \aut(Z(Y))$ of order $3$ introduced in Section \ref{sec: intro}. Let $\sigma \in \aut(Y)$ be the automorphism (\ref{eq: sigma}). The image of the embedding $j: Y \hookrightarrow Z$ is globally invariant under the automorphism: in particular, $g(j(y)) = j(\sigma(y))$. On the other hand, a point $p \in Z \setminus j(Y)$ corresponds to a three-dimensional linear subspace $\Gamma \subset \PP^5$ and a linear determinantal representation $[A]$ of the surface $S = \Gamma \cap Y$. If $A = \left( w_{i,j} \right)$, with $w_{i,j} \in H^0(\mathcal{O}_\Gamma(1))$ for $i,j = 1, 2, 3$, then the image $g(p)$ is the point of $Z \setminus j(Y)$ defined by the datum $\left( \Gamma', [A'] \right)$, where $\Gamma' := \sigma(\Gamma)$ and $A'$ is the $3 \times 3$-matrix whose entries are $w_{i,j} \circ \sigma^{-1} \in H^0(\mathcal{O}_{\Gamma'}(1))$. The fixed locus of $g$ is the Lagrangian submanifold $Z_H \subset Z$ which parametrizes generalized twisted cubics contained in the cubic threefold $Y_H = Y \cap \left\{ x_5 = 0 \right\}$ (see \cite[Proposition 6.7]{CC} and \cite[Proposition 2.9]{ShSol}).

\subsection{Voisin's rational map}
\label{ss:psi}
  
  \begin{proposition}[Voisin {\cite[Proposition 4.8]{V14}}]\label{psi} Let $Y\subset\PP^5$ be a smooth cubic fourfold not containing a plane. Let $F=F(Y)$ be the Fano variety of lines and $Z=Z(Y)$ be the \llsvs{} eightfold of $Y$. There exists a degree $6$ dominant rational map
  \[ \psi\colon\ \ F\times F\ \dashrightarrow\ Z\ .\]

Moreover for suitable symplectic forms $\omega_F, \omega_Z$ on $F$, resp.\ $Z$, we have
  \[ \psi^\ast(\omega_Z)= (pr_1)^\ast(\omega_F) -(pr_2)^\ast(\omega_F)\ \ \ \hbox{in}\ H^{2,0}(F\times F)\ \]
(where $pr_j$ denotes the projection on the $j$-th factor). 
   \end{proposition}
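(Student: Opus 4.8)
The final statement to prove is Proposition~\ref{psi} (Voisin's rational map), and in particular the formula $\psi^\ast(\omega_Z) = (pr_1)^\ast(\omega_F) - (pr_2)^\ast(\omega_F)$.

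\medskip

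\textbf{Plan of proof.} The plan is to establish the existence of the degree $6$ dominant rational map first, and then to pin down its behaviour on holomorphic $2$-forms by a Hodge-theoretic argument combined with equivariance. For the existence statement, I would recall the geometric input behind Voisin's construction: given a smooth cubic fourfold $Y$ not containing a plane, a generic pair of lines $(\ell_1, \ell_2) \in F \times F$ spans a $3$-plane $P \cong \PP^3 \subset \PP^5$, and the intersection $S = P \cap Y$ is a smooth cubic surface. On $S$ the two lines $\ell_1, \ell_2$ together determine (via the linear system $|\ell_1 + \ell_2 + \text{(residual)}|$, or more precisely via one of the $72$ determinantal representations pinned down by an ordered pair of disjoint lines, cf.\ Subsection~\ref{construction llsvs}) a generalized twisted cubic curve $C \subset S \subset Y$, and hence a point $u(C) \in Z$. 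This assignment $(\ell_1, \ell_2) \mapsto u(C)$ is the rational map $\psi$. The degree computation (namely $6$) is a matter of counting: for a generic point of $Z$, corresponding to a linear system of aCM twisted cubics on a fixed cubic surface $S = P \cap Y$, one counts the ordered pairs of lines on $S$ producing that linear system; this is the count carried out in \cite{V14}.

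\medskip

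\textbf{The form formula.} For the statement about $\omega_Z$, I would argue as follows. Since $\psi$ is dominant and generically finite, $\psi^\ast\colon H^{2,0}(Z) \to H^{2,0}(F \times F)$ is injective, and $H^{2,0}(Z) = \C\omega_Z$ while $H^{2,0}(F\times F) = (pr_1)^\ast H^{2,0}(F) \oplus (pr_2)^\ast H^{2,0}(F) = \C\,(pr_1)^\ast\omega_F \oplus \C\,(pr_2)^\ast\omega_F$ (using that $H^{1,0}(F)=0$, so there is no cross term $H^{1,0}\otimes H^{1,0}$). Hence $\psi^\ast\omega_Z = \alpha\,(pr_1)^\ast\omega_F + \beta\,(pr_2)^\ast\omega_F$ for some constants $\alpha, \beta \in \C$, not both zero. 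To determine $\alpha$ and $\beta$ up to the obvious scaling ambiguity in the choice of $\omega_Z$, I would use the natural symmetry of the construction: swapping the two factors of $F \times F$ reverses the roles of $\ell_1$ and $\ell_2$, which exchanges the ordered pair; geometrically this corresponds to composing $\psi$ with the anti-symplectic involution $\iota_Z$ on $Z$ (or an appropriate automorphism), because interchanging the two lines changes the determinantal representation to its ``transpose,'' which is exactly the involution on $Z$ coming from the LLSvS construction. Thus $\psi \circ \tau = \iota_Z \circ \psi$ where $\tau$ swaps factors, and since $\iota_Z^\ast \omega_Z = -\omega_Z$ we get $\tau^\ast \psi^\ast \omega_Z = -\psi^\ast\omega_Z$, i.e.\ $\alpha\,(pr_2)^\ast\omega_F + \beta\,(pr_1)^\ast\omega_F = -\alpha\,(pr_1)^\ast\omega_F - \beta\,(pr_2)^\ast\omega_F$, forcing $\beta = -\alpha$. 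Rescaling $\omega_Z$ so that $\alpha = 1$ yields the claimed formula.

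\medskip

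\textbf{Main obstacle.} The delicate point is making the identification $\psi \circ \tau = \iota_Z \circ \psi$ precise, i.e.\ checking that the involution induced on $Z$ by transposing the determinantal representation really is the intrinsic anti-symplectic involution $\iota_Z$ of the LLSvS eightfold, and that $\iota_Z$ acts by $-1$ on $H^{2,0}(Z)$ --- this is where one invokes that $\iota_Z$ is anti-symplectic, a fact established in the LLSvS/Lehn--Lehn--Sorger--van Straten theory. Alternatively, and perhaps more robustly, one can bypass the involution and instead compute $\psi^\ast\omega_Z$ directly on a well-chosen test surface inside $F \times F$ --- for instance, restricting to a fibre $\{\ell_1\} \times F$ or to the diagonal --- using the explicit description of twisted cubics on the cubic surface $S$ and Voisin's analysis of how the $2$-form varies; the coefficients $\alpha, \beta$ then drop out of an explicit period computation. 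I expect the cleanest route in the paper is the symmetry argument sketched above, with the geometric verification that factor-swap corresponds to the anti-symplectic involution being the technical heart.
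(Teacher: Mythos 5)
The paper does not actually prove this statement: Proposition \ref{psi} is quoted from Voisin \cite[Proposition 4.8]{V14}, and the surrounding text only recalls the geometric construction of $\psi$ (the point $\psi(l,l')$ is the linear system of twisted cubics on $S=\langle l,l'\rangle\cap Y$ containing $l\cup_x Q'_x$, with $Q'_x$ the conic residual to $l'$ in the plane $\langle x,l'\rangle$). So there is no internal proof to match, and your proposal should be judged as a reconstruction. For the existence and the degree you in effect re-quote Voisin (``this is the count carried out in \cite{V14}''), which is no more than what the paper itself does; but be careful that your first description of the assignment, the linear system ``$|\ell_1+\ell_2+(\hbox{residual})|$'', is not the right class. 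The curve attached to the \emph{ordered} pair is $\ell_1\cup Q'$ with $Q'\sim h_S-\ell_2$ ($h_S$ the hyperplane class of $S$), so the class is $h_S+\ell_1-\ell_2$: it is anti-symmetric in the two lines, and this anti-symmetry is precisely what makes the factor swap correspond to the transposed determinantal representation (the transpose of a representation with class $C$ has class $2h_S-C$), hence to the double-six involution $\iota$, and what ultimately produces the minus sign. A symmetric recipe such as the one you first wrote could not pull $\omega_Z$ back to an anti-invariant form.

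Your argument for the form formula itself is correct: K\"unneth with $H^{1,0}(F)=0$ gives $\psi^*\omega_Z=\alpha\,(pr_1)^*\omega_F+\beta\,(pr_2)^*\omega_F$, injectivity of pullback of holomorphic $2$-forms under a dominant generically finite rational map gives $(\alpha,\beta)\neq(0,0)$, and the equivariance $\psi\circ\tau=\iota\circ\psi$ together with $\iota^*\omega_Z=-\omega_Z$ forces $\beta=-\alpha$; since $\omega_Z$ is only defined up to scalar, normalizing $\alpha=1$ is harmless (and is indeed how the formula is used later in the paper). The inputs you worry about are available independently of the formula, so there is no circularity: the compatibility $\iota(\psi(l,l'))=\psi(l',l)$ is exactly Remark \ref{rmk: compatibility involution}, established via \cite{LLMS} and \cite{BK}, and anti-symplecticity of $\iota$ comes from \cite{MLe} (alternatively, $\hbox{Fix}(\iota)$ contains the Lagrangian $j(Y)$, whereas a symplectic involution has symplectic fixed locus). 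The remaining caveat is simply that the degree-$6$ count is asserted rather than proved in your sketch; given that the proposition is itself a citation in the paper, this is acceptable, but it means your text is a plausibility argument for the cited result rather than an independent proof of it.
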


We briefly recall the geometric construction of the rational map $\psi$. Let $(l,l') \in F \times F$ be a generic point, so that the two lines $l, l'$ on $Y$ span a three-dimensional linear space in $\mathbb{P}^5$. For any point $x \in l$, the plane $\langle x, l'\rangle$ intersects the smooth cubic surface $S = \langle l, l' \rangle \cap Y$ along the union of $l'$ and a residual conic $Q'_x$, which passes through $x$. Then, $\psi(l,l') \in Z$ is the point corresponding to the two-dimensional linear system of twisted cubics on $S$ linearly equivalent to the rational curve $l \cup_x Q'_x$ (this linear system actually contains the $\mathbb{P}^1$ of curves $\left\{ l \cup_x Q'_x \mid x \in l\right\}$).

By \cite[Remark 3.8]{LLMS}, the involution $\iota \in \aut(Z)$ introduced in Section \ref{sec: intro} acts on the subset $Z \setminus j(Y)$ as follows. A point $p \in Z \setminus j(Y)$ corresponds to a linear determinantal representation $[A]$ of a smooth cubic surface $S = Y \cap \mathbb{P}^3$, or equivalently to a \emph{six} of lines $(e_1, \ldots, e_6)$ on $S$ (see \cite[Section 9.1.2]{dolgachev} and \cite{BK}). Then, $\iota(p)$ is the point of $Z \setminus j(Y)$ associated with the (unique) six $(e'_1, \ldots, e'_6)$ on the same surface $S$ which forms a \emph{double-six} together with $(e_1, \ldots, e_6)$. In terms of linear determinantal representations, $\iota(p)$ corresponds to the transposed representation $[A^t]$ of the surface $S$ by \cite[Proposition 3.2]{BK}. It can be checked that, if $C$ is a twisted cubic curve on $S$ in the linear system parametrized by the point $p$, then, for any quadric $\mathcal{Q} \subset \langle C \rangle$ containing $C$, the residual intersection $C'$ of $S$ with $\mathcal{Q}$ belongs to the fiber $u^{-1}(\iota(p))$ (see for instance \cite[Footnote 47]{debarre}). By the geometric description of the rational map $\psi$, we conclude 
\begin{equation}\label{rmk: compatibility involution}
	\iota(\psi(l,l')) = \psi(l',l)
\end{equation}

In fact, for $x \in l$ and $x' \in l'$, the reducible quadric $\mathcal{Q} = \langle x, l'\rangle \cup \langle x', l \rangle$ intersects the surface $S$ along the union of $l \cup_x Q'_x$ and $l' \cup_{x'} Q_{x'}$.

   \begin{remark}The indeterminacy locus of the rational map $\psi$ is the codimension $2$ locus $I\subset F\times F$ of intersecting lines (see \cite[Theorem 1.2]{Mura}). Let $Y$ be a cubic fourfold not containing a plane and suppose that $Y$ has finite-dimensional motive.
 Unfortunately, we are {\em not\/} able to prove that the \llsvs{} eightfold $Z=Z(Y)$ also has finite-dimensional motive. It has now been computed by Chen \cite[Theorem 1.1]{Chen} that blowing up the incidence locus $I\subset F\times F$ gives a resolution of indeterminacy of Voisin's map $\psi$ (cf.\ subsection \ref{ss:psi} below), but we still do not know whether $I\subset F\times F$ has finite-dimensional motive.
 
  As we will see in the proof of Proposition \ref{prop:split}, we have at least a weaker result: there exists a submotive $M\subset h(Z)$ such that $M$ is finite-dimensional and $M$ is responsible for the $0$-cycles, i.e. $A^8(M)=A^8(Z)$. Also, in the special case where $Y$ is a Pfaffian cyclic cubic not containing a plane, the eightfold $Z(Y)$ has finite-dimensional motive (see the proof of Proposition \ref{pfaf}).
  
A possible approach to the question of expressing the motive of $Z(Y)$ in terms of the one of $Y$ is to exploit the fact that $Z(Y)$ can be interpreted by \cite{LLMS} as a moduli space of semistable objects, with respect to a suitable Bridgeland stability condition, in the Kuznetsov component of $D^b(Y)$ (cf.\ also \cite{LPZ} and \cite[Section 2.2]{Chen}). 
Provided one can adapt the argument of \cite[Theorem 0.1 and Remark 2.1]{Bul} (replacing the K3 category of loc.\ cit.\ by the Kuznetsov category of a cubic fourfold), this interpretation
would give a nice motivic relation between $Z(Y)$ and powers of $Y$ (in particular, $Z(Y)$ would have finite-dimensional motive if $Y$ has). This approach has been worked out in \cite{FFZ}.
\end{remark}
  
 \section{Decomposition of $A^8(Z)$} \label{ss:vois} 
  
  In this section we construct a \splitting{} of $A^8(Z)$ for \llsvs{} eightfolds associated with cyclic cubic fourfolds not containing a plane; the key ingredient will be the finite-dimensionality proven in \cite{fam}. Later, in Section \ref{subsec-Pfaffians}, we give a similar construction in the special case of Pfaffian cubic fourfolds not containing a plane based on \cite{V6}.
  
 \begin{definition}[Voisin \cite{V14}]
 Let $X$ be a hyperk\"ahler variety of dimension $n=2m$. One defines $S_j (X)\subset X$ as the set of points whose orbit under rational equivalence has dimension $\ge j$.
 The descending filtration $S_\ast$ on $A^n(X)$ is defined by letting $S_j A^n(X)$ be the subgroup generated by points $x\in S_j (X)$.
 \end{definition}
 
 \begin{remark} As explained in \cite{V14}, the expectation is that the filtration $S_\ast$ is opposite to the Bloch--Beilinson filtration (which conjecturally exists for all smooth projective varieties) and thus provides a splitting{} of it. This is motivated by Beauville's speculations in \cite{Beau3} (cf.\ \cite{V12}). 

 \end{remark}
 
  \begin{proposition}\label{prop:split} Let $Y\subset\PP^5$ be a cubic fourfold not containing a plane, let $F=F(Y)$ be the Fano variety of lines and let $Z=Z(Y)$ be the associated \llsvs{} eightfold, such that $F$ has finite-dimensional motive.
  
  \noindent
  (\rom1)(Shen--Vial \cite{SV}) There exist mutually orthogonal idempotents $\Pi_{F\times F}^8, \Pi_{F\times F}^{10},\ldots,\Pi_{F\times F}^{16}\in A^8(F^4)$ with the property that
    \[ A^8_{(j)}(F\times F):= \bigoplus_{k+\ell=j} A^4_{(k)}(F)\otimes A^4_{(\ell)}(F) = (\Pi_{F\times F}^{16-j})_\ast A^8(F\times F)\ .\]
    (Here $A^4_{(\ast)}(F)$ refers to the Fourier decomposition of Theorem \ref{sv}.)
    
    Moreover, the idempotent $\Pi_{F\times F}^{k}$ acts on cohomology as projector on $ H^k_{}(F\times F)$.
    
  \noindent
  (\rom2) There exist idempotents $\Pi_{Z}^8,\Pi_{Z}^{10},\ldots,\Pi_{Z}^{16}\in A^8(Z\times Z)$, such that
    \[  (\Pi_{Z}^8+ \Pi_{Z}^{10}+\cdots+ \Pi_{Z}^{16})_\ast=\ide\colon\ \ \ A^8(Z)\ \to\  A^8(Z)\ \]
    and  
    \[ (\Pi_{Z}^k\circ \Pi_Z^\ell)_\ast A^8(Z)=0\ \ \ \forall k\not=\ell\ .\]

   Moreover, the idempotent ${}^t \Pi_Z^k$ acts as the identity on $H^{16-k}_{tr}(Z)$ (where $H^{16-k}_{tr}(Z)$ denotes the orthogonal complement of the algebraic part of the cohomology $N^1$ under the cup product pairing).
   \end{proposition}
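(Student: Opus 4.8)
The plan is to build the projectors on $Z$ by transporting the projectors $\Pi_{F\times F}^{k}$ from part (\rom1) across Voisin's degree $6$ rational map $\psi\colon F\times F\dashrightarrow Z$, exactly as is done in the proof of finite-dimensionality arguments relating $Z$ to powers of $Y$. Concretely, let $\Gamma_\psi\in A^8((F\times F)\times Z)$ be the closure of the graph of $\psi$ (well defined by Proposition \ref{quot}-style formalism, or directly since $\psi$ has a resolution by a blow-up along $I$ by Chen's theorem), and let ${}^t\Gamma_\psi$ be its transpose. Since $\psi$ has degree $6$, the composition $\tfrac16\,\Gamma_\psi\circ{}^t\Gamma_\psi$ acts as the identity on $A^8(Z)$: indeed $\psi_\ast\psi^\ast=6\cdot\ide$ on Chow groups, and more precisely $\Gamma_\psi\circ{}^t\Gamma_\psi=6\Delta_Z+(\text{correction supported over a proper closed subset})$, where the correction term acts trivially on $A^8(Z)$ because $A^8$ of a variety of dimension $<8$ vanishes. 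This is the mechanism by which $h(Z)$ (or at least the part of it seeing $0$-cycles) becomes a direct summand of $h(F\times F)$, up to the factor $6$; this is the "weaker result'' alluded to in the remark after Theorem \ref{findim}, namely a finite-dimensional submotive $M\subset h(Z)$ with $A^8(M)=A^8(Z)$.

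Granting this, I would define
\[ \Pi_Z^k:=\tfrac16\,\Gamma_\psi\circ \Pi_{F\times F}^{k}\circ {}^t\Gamma_\psi\ \in\ A^8(Z\times Z)\ ,\qquad k\in\{8,10,\ldots,16\}\ .\]
The claim $(\Pi_Z^8+\cdots+\Pi_Z^{16})_\ast=\ide$ on $A^8(Z)$ is then immediate from $\sum_k\Pi_{F\times F}^{k}$ acting as the identity on $A^8(F\times F)$ together with the degree-$6$ relation above, since any $0$-cycle on $Z$ is $\tfrac16\psi_\ast\psi^\ast$ of itself. For the "almost-orthogonality'' statement $(\Pi_Z^k\circ\Pi_Z^\ell)_\ast A^8(Z)=0$ for $k\ne\ell$, one computes
\[ \Pi_Z^k\circ\Pi_Z^\ell=\tfrac1{36}\,\Gamma_\psi\circ\Pi_{F\times F}^{k}\circ\bigl({}^t\Gamma_\psi\circ\Gamma_\psi\bigr)\circ\Pi_{F\times F}^{\ell}\circ{}^t\Gamma_\psi\ ,\]
and here ${}^t\Gamma_\psi\circ\Gamma_\psi$ is a self-correspondence of $F\times F$ which, a priori, is not $6\Delta_{F\times F}$; however its "defect'' is supported on a proper closed subset of $F\times F$ coming from the indeterminacy/ramification of $\psi$, hence is killed after applying $(-)_\ast$ to $A^8(Z)$ and composing on both sides with $\Gamma_\psi,{}^t\Gamma_\psi$ — alternatively, one invokes finite-dimensionality of $F$ (Theorem \ref{findim}) and Kimura's nilpotence (Theorem \ref{nilp}) to correct the relevant idempotents, as in the proof of Proposition \ref{prop:split}(\rom1). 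Either way one gets that, modulo correspondences acting as zero on $A^8(Z)$, the $\Pi_Z^k$ behave like orthogonal idempotents, which is exactly the asserted statement.

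For the cohomological statement about ${}^t\Pi_Z^k$, I would argue on $H^\ast$ using that $\Gamma_\psi$ realizes, in cohomology, $\psi^\ast$ and $\psi_\ast$ with $\psi_\ast\psi^\ast=6\cdot\ide$, and that by Proposition \ref{psi} the map $\psi^\ast$ is compatible with weights/Hodge structures and sends $\omega_Z$ to $(pr_1)^\ast\omega_F-(pr_2)^\ast\omega_F$. Since $\Pi_{F\times F}^{k}$ projects onto $H^k(F\times F)$ by part (\rom1), the operator ${}^t\Pi_Z^k$ acts on $H^\ast(Z)$ as $\tfrac16\psi_\ast\circ(\text{proj. onto }H^k)\circ\psi^\ast$, which is the identity on the image of $H^k(F\times F)$ under $\tfrac16\psi_\ast\psi^\ast$ and zero elsewhere; one then checks this image contains $H^{16-k}_{tr}(Z)$ by a weight/transcendental-part argument — the transcendental cohomology of $Z$ is a sub-Hodge-structure of that of $F\times F$ via $\psi^\ast$ (this uses that $\psi$ is dominant of finite degree so $\psi^\ast$ is injective on $H^\ast$), and $N^1$ is handled separately. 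The main obstacle, and the step requiring genuine care rather than formal manipulation, is controlling the defect correspondence ${}^t\Gamma_\psi\circ\Gamma_\psi-6\Delta_{F\times F}$ precisely enough: one must know it is supported on (the preimage of) the indeterminacy locus $I\subset F\times F$ together with the ramification, invoke Chen's explicit resolution by blow-up along $I$ to identify the exceptional contributions, and then verify these contributions act trivially on $A^8(Z)$ for dimension reasons (they factor through varieties of dimension $<8$) — or, if one prefers the motivic route, one must carefully set up the finite-dimensional submotive $M\subset h(Z)$ and apply Kimura nilpotence to promote homological orthogonality to the stated Chow-theoretic statement.
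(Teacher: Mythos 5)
There are two genuine gaps in your construction of the $\Pi_Z^k$. First, the proposition (and its later use) requires the $\Pi_Z^k$ to be \emph{actual idempotents} in $A^8(Z\times Z)$ — e.g.\ Proposition \ref{piece2} works with the Chow motive $(Z,\Pi_Z^{14},0)$ and exploits its finite-dimensionality — whereas your correspondences $\tfrac16\,\Gamma_\psi\circ \Pi^k_{F\times F}\circ{}^t\Gamma_\psi$ are only argued to \emph{act} idempotently on $A^8(Z)$; "behaving like idempotents modulo correspondences acting as zero on $A^8$" is strictly weaker than what is asserted. Your proposed repair via Kimura nilpotence does not apply: nilpotence of homologically trivial self-correspondences of $Z$ would require $h(Z)$ to be finite-dimensional, which is precisely what is \emph{not} known (the paper says so explicitly). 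The paper circumvents this by applying Vial's splitting lemma \cite[Lemma 3.6]{V3} to the morphism $\bar\Gamma_\psi\colon h^k(F\times F)\to h(Z)$: finite-dimensionality of the \emph{source} alone yields a splitting $h(Z)=M_k\oplus M_k'$ with $N_k\cong M_k$ for a summand $N_k$ of $h^k(F\times F)$, producing genuine idempotents on $Z$ and, as a bonus, finite-dimensionality of $M_k$, which is exactly what the later arguments need.

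Second, your key support claim is false: while $\Gamma_\psi\circ{}^t\Gamma_\psi=6\Delta_Z+(\hbox{terms supported on }W\times W,\ W\subsetneq Z)$ is correct (whence $\psi_\ast\psi^\ast=6\,\ide$ on $A^8(Z)$ and on $H^\ast_{tr}(Z)$), the other composition satisfies ${}^t\Gamma_\psi\circ\Gamma_\psi=\Delta_{F\times F}+R+(\hbox{terms over the indeterminacy locus})$, where $R$ is the residual incidence correspondence of degree $5$ (pairs of distinct points in the same fibre of $\psi$); $R$ dominates both copies of $F\times F$ and is \emph{not} supported over the indeterminacy or ramification locus, so it is not killed by any dimension argument. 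Hence your verification of $(\Pi_Z^k\circ\Pi_Z^\ell)_\ast A^8(Z)=0$ collapses unless you additionally prove that $\Pi^k_{F\times F}\circ R\circ\Pi^\ell_{F\times F}$ acts as zero on $0$-cycles, i.e.\ essentially that $\psi^\ast\psi_\ast$ respects the grading $A^8_{(\ast)}(F\times F)$ — a nontrivial statement that the paper obtains as a by-product of its motivic splitting (the decomposition $A^8_{(16-k)}(F\times F)=\psi^\ast\psi_\ast A^8_{(16-k)}\oplus(\ker\psi_\ast)\cap A^8_{(16-k)}$ and the commutative diagrams it induces), not by a support computation on ${}^t\Gamma_\psi\circ\Gamma_\psi$. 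Your cohomological statement for ${}^t\Pi_Z^k$ is in the right spirit (it is the same $\psi_\ast\psi^\ast=6\,\ide$ on $H^\ast_{tr}(Z)$ argument as in the paper), but it too presupposes the corrected construction of the projectors.
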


  \begin{proof} 
  (\rom1) Let $\{\Pi_F^i\}$ be the CK decomposition given by Theorem \ref{sv}, such that $A^4_{(k)}(F)=(\Pi_F^{8-k})_*A^4(F)$. We consider the product CK decomposition 
    \begin{equation}\label{proddec} \Pi_{F\times F}^{16-j}:=\displaystyle\sum_{\substack{k+l=j\\ k,l\leq 4}}\Pi_F^{8-k}\times\Pi_F^{8-l}\ .\end{equation}
  It is easy to check that these are mutually orthogonal idempotents in $A^8(F^4)$, lifting the K\"unneth components in cohomology.

\noindent
(\rom2) There is a map of motives
  \[ \bar{\Gamma}_\psi\colon\ \ \ h(F\times F)\ \to\ h(Z)\ \ \ \hbox{in}\ \MM_{\rm rat}\ ,\]
  where $\bar{\Gamma}_\psi\in A^8(F\times F\times Z)$ denotes the closure of the graph of the rational map $\psi$ of Subsection \ref{ss:psi}.
  The projectors of (\rom1) define motives
  \[ h^k(F\times F):= (F\times F, \Pi^k_{F\times F},0)\ \ \ \in\ \MM_{\rm rat}\ \ \ \ \ (k=8,\ldots,16)\ .\]
  
  Let us now consider the map of motives
  \[ \bar{\Gamma}_\psi\colon\ \ \ h^k(F\times F)\ \to\ h(Z)\ \ \ \hbox{in}\ \MM_{\rm rat}\ \ \ \ \ (k=8,\ldots,16)\ .\]  
  
   Using finite-dimensionality of $h^k(F\times F)$ and applying \cite[Lemma 3.6]{V3} to this map, one obtains a \splitting{}
     \begin{equation} \label{eq-motivesplitting} h(Z)= M_k\oplus M_k^\prime\ \ \ \hbox{in}\ \MM_{\rm rat}\ ,\end{equation}                                                                                                        
     such that the map from $h^k(F\times F)$ to $M_k$ (induced by $\bar{\Gamma}_\psi$) admits a right-inverse. This gives a \splitting{}
     \[ h^k(F\times F)=N_k\oplus N_k^\prime\ \ \ \hbox{in}\ \MM_{\rm rat}\ \]
     such that $\bar{\Gamma}_\psi$ induces an isomorphism $N_k\cong M_k$, and so in particular $M_k$ is finite-dimensional. On the level of zero-cycles, the effect of this is that there is a \splitting{}
     \[ A^8_{(16-k)}(F\times F)=A^8(N_k)\oplus A^8(N_k^\prime)\ ,\]
     where $A^8(N_k)=\psi^\ast \psi_\ast A^8_{(16-k)}(F\times F)$ and $A^8(N_k^\prime)=(\ker \psi_\ast)\cap A^8_{(16-k)}(F\times F)$. The isomorphism of motives $N_k\cong M_k$ induces an isomorphism of Chow groups 
     \[ \psi_\ast\colon \ \ A^8(N_k)\ \xrightarrow{\cong}\ A^8(M_k)\ .\]
     Since we know that 
     \[ \psi_\ast\colon\  A^8(F\times F)=\sum_{k=8}^{16} (\Pi^{k}_{F\times F})_\ast A^8(F\times F)\  \to\ A^8(Z)\]
      is surjective, it follows that $\sum_{k=8}^{16} A^8(M_k)=A^8(Z)$ and so $A^8(M_k^\prime)=0$. 
 The projectors defining the motives $M_k$, $k=8,\ldots,16$, give the required $\Pi_Z^k$. 
 
 For any $k\not=\ell$, consider the commutative diagram
   \[ \begin{array}[c]{ccc}       A^8(F\times F) &\xrightarrow{\psi_\ast}& A^8(Z)\\
                                           &&\\
                                          {\scriptstyle  (\Pi^\ell_{F\times F})_\ast}   \downarrow
                                          \ \ \ \ \ \ \ \ \ 
                                            &&\ \  \ \ \ \ \downarrow   {\scriptstyle (\Pi^\ell_Z)_\ast} \\
                                          &&\\
                                        A^8(F\times F) &\xrightarrow{\psi_\ast}& A^8(Z)\\
                                           &&\\
                                           {\scriptstyle  (\Pi^k_{F\times F})_\ast} \downarrow 
                                           \ \ \ \ \ \  \ \ \ && \ \ \ \  \ \ \downarrow   {\scriptstyle (\Pi^k_{Z})_\ast} \\
                                          &&\\
                                        A^8(F\times F) &\xrightarrow{\psi_\ast}& A^8(Z)\\
                                             \end{array}\]
                          Since the $\Pi^k_{F\times F}$ of (\rom1) are mutually orthogonal, it follows that $ \Pi_{Z}^k\circ \Pi_Z^\ell$ acts as $0$ on Chow groups.                
    
 For the action in cohomology, we notice that ${}^t \Pi^k_{F\times F}$ acts as projector on $H^{16-k}(F\times F)$. The claimed behaviour of
 ${}^t \Pi^k_Z$ follows from the commutative diagram
     \[ \begin{array}[c]{ccc}       H^{16-k}(F\times F) &\leftrightarrows& H^{16-k}(Z)\\
                                           &&\\
                                          {\scriptstyle  ({}^t \Pi^k_{F\times F})_\ast}   \downarrow
                                          \ \ \ \ \ \ \ \ \ 
                                            &&\ \  \ \ \ \ \downarrow   {\scriptstyle ({}^t \Pi^k_Z)\ast} \\
                                          &&\\
                                        H^{16-k}(F\times F) &\leftrightarrows& H^{16-k}(Z)\ \\
                                             \end{array}\]
                                   (where horizontal arrows are given by $\psi_\ast$ and $\psi^\ast$), plus the fact that $\psi_\ast \psi^\ast=6 \ide$ on $H^\ast_{tr}(Z)$.         
   \end{proof}

 \begin{corollary}\label{cor:split} Let $Y, F, Z$ be as in Proposition \ref{prop:split}, and set $A^8_{(j)}(Z):=(\Pi_Z^{16-j})_\ast A^8(Z)$. This induces a \splitting{}
   \[ A^8(Z)=A^8_{(0)}(Z)\oplus A^8_{(2)}(Z)\oplus \cdots A^8_{(8)}(Z)\ ,\] 
   such that
    \[ \psi_\ast A^8_{(2i)}(F\times F)= A^8_{(2i)}(Z)\ .\]
    The piece $A^8_{(2i)}(Z)$ is isomorphic to $S_{4-i}A^8(Z)/S_{5-i}A^8(Z)$.
   \end{corollary}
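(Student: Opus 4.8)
The plan is to deduce the splitting from Proposition \ref{prop:split} together with a compatibility check with Voisin's filtration $S_\ast$. First, from Proposition \ref{prop:split}(\rom2) the idempotents $\Pi_Z^8,\Pi_Z^{10},\ldots,\Pi_Z^{16}$ sum to the identity on $A^8(Z)$ and the products $\Pi_Z^k\circ\Pi_Z^\ell$ act as zero on $A^8(Z)$ for $k\neq\ell$; hence setting $A^8_{(j)}(Z):=(\Pi_Z^{16-j})_\ast A^8(Z)$ gives a direct sum decomposition $A^8(Z)=\bigoplus_{i=0}^4 A^8_{(2i)}(Z)$, with only even indices occurring because the corresponding statement holds for $F\times F$ by Theorem \ref{sv} (indeed $A^4_{(k)}(F)=0$ for $k$ odd, so $A^8_{(j)}(F\times F)=0$ for $j$ odd). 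The equality $\psi_\ast A^8_{(2i)}(F\times F)=A^8_{(2i)}(Z)$ is then immediate from the commutative diagram in the proof of Proposition \ref{prop:split}: $\psi_\ast$ intertwines $(\Pi^{16-j}_{F\times F})_\ast$ with $(\Pi^{16-j}_Z)_\ast$, and $\psi_\ast\colon A^8(F\times F)\to A^8(Z)$ is surjective (Proposition \ref{psi}), so $\psi_\ast\bigl((\Pi^{16-j}_{F\times F})_\ast A^8(F\times F)\bigr)=(\Pi^{16-j}_Z)_\ast A^8(Z)$.

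The substantive point is the identification $A^8_{(2i)}(Z)\cong S_{4-i}A^8(Z)/S_{5-i}A^8(Z)$. Here I would invoke the analogous statement on the Fano side: by the Shen--Vial analysis of $A^\ast(F)$ (and its square $F\times F$), the Fourier grading $A^8_{(2i)}(F\times F)$ is known to compute the graded pieces of the orbit filtration $S_\ast$ on $A^8(F\times F)$ — more precisely, $S_j A^8(F\times F)=\bigoplus_{2i\geq \dim(F\times F)-2j \,?}$... concretely, one uses that $F\times F$ is a hyperk\"ahler-type variety of dimension $8$ with an MCK-type decomposition whose lowest piece $A^8_{(0)}$ is one-dimensional (generated by the class of a constant cycle, i.e.\ of a point with maximal-dimensional rational equivalence orbit), and inductively $A^8_{(2i)}$ sits in $S_{4-i}$ but not in $S_{5-i}$. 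The transfer to $Z$ goes through the degree-$6$ map $\psi$: since $\psi$ is dominant of degree $6$ and $\psi^\ast\psi_\ast=6\,\ide$ on the relevant motive pieces (cf.\ the last line of the proof of Proposition \ref{prop:split}), $\psi_\ast$ and $\tfrac16\psi^\ast$ are mutually inverse isomorphisms between $A^8_{(2i)}(F\times F)$ and $A^8_{(2i)}(Z)$; and a dominant rational map sends a point with $d$-dimensional rational-equivalence orbit to a point whose orbit has dimension $\geq d-(\dim(F\times F)-\dim Z)=d$ — and in fact exactly $d$ generically — so $\psi$ respects the filtrations $S_\ast$ up to the expected index shift. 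Combining these, $\psi_\ast$ induces an isomorphism $S_{4-i}A^8(F\times F)/S_{5-i}A^8(F\times F)\xrightarrow{\sim}S_{4-i}A^8(Z)/S_{5-i}A^8(Z)$, which via the first part is identified with $A^8_{(2i)}(F\times F)\xrightarrow{\sim}A^8_{(2i)}(Z)$, giving the claim.

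The main obstacle is the last step: controlling how the orbit filtration $S_\ast$ behaves under the rational map $\psi$, since $\psi$ is only rational (with indeterminacy locus $I\subset F\times F$ of codimension $2$) rather than a morphism. One must check that for a very general point of $Z$ the fibre of $\psi$ consists of $6$ points lying outside $I$, so that $\psi$ and $\psi^\ast$ genuinely relate rational-equivalence orbits, and that pushing forward a zero-cycle supported on a constant-cycle subvariety of $F\times F$ of the appropriate dimension lands in the correct step $S_\ast A^8(Z)$; the reverse inclusion uses that $\psi^\ast$ of a point with large orbit in $Z$ is a cycle supported on a subvariety of $F\times F$ of the expected dimension (the preimage under $\psi$), again modulo the indeterminacy locus. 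I expect this to be handled by working with the resolved map of \cite{Chen} (the blow-up of $F\times F$ along $I$) so that everything becomes a genuine morphism, at the cost of a short diagram chase relating Chow groups of the blow-up to those of $F\times F$.
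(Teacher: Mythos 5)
The first half of your argument (the direct sum decomposition and the equality $\psi_\ast A^8_{(2i)}(F\times F)=A^8_{(2i)}(Z)$) matches the paper, which indeed treats this as immediate from Proposition \ref{prop:split}. The gap is in the substantive half, the identification $A^8_{(2i)}(Z)\cong S_{4-i}A^8(Z)/S_{5-i}A^8(Z)$. The paper does not re-derive the compatibility of Voisin's filtration with $\psi$: it quotes two precise results of \cite{V14}, namely \cite[Corollary 4.9]{V14}, which gives $S_kA^8(Z)\cong \psi_*\bigl(\sum_{i+j=k}S_iA^4(F)\otimes S_jA^4(F)\bigr)$, and \cite[Proposition 4.5]{V14}, which identifies $S_1A^4(F)=A^4_{(0)}(F)\oplus A^4_{(2)}(F)$ and $S_2A^4(F)=A^4_{(0)}(F)$; with these, $S_{4-i}A^8(Z)=\psi_*\bigl(\bigoplus_{j\le i}A^8_{(2j)}(F\times F)\bigr)=\bigoplus_{j\le i}A^8_{(2j)}(Z)$ and the corollary follows in two lines. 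You assert the analogous facts ("the Fourier grading is known to compute the graded pieces of $S_*$", "$\psi$ respects the filtrations") without proof or reference, and you yourself flag the transfer through the rational map as "the main obstacle" --- but that obstacle \emph{is} the content of the statement, so as written the proof is not complete.

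Moreover, the one concrete mechanism you propose for the transfer is incorrect: you claim $\psi_\ast$ and $\tfrac16\psi^\ast$ are mutually inverse isomorphisms between $A^8_{(2i)}(F\times F)$ and $A^8_{(2i)}(Z)$. Only $\psi_\ast\psi^\ast=6\,\ide$ on $A^8(Z)$ is available; $\psi_\ast$ is not known (and not expected) to be injective on $A^8_{(2i)}(F\times F)$ --- the construction in Proposition \ref{prop:split} explicitly allows a nonzero kernel piece $A^8(N_k^\prime)=(\ker\psi_\ast)\cap A^8_{(16-k)}(F\times F)$. (Injectivity is in fact not needed: surjectivity of $\psi_\ast$ on each filtration step, plus the identification of $S_\ast$ on the source, suffices --- which is exactly how the quoted results of Voisin are used.) Likewise, your orbit-dimension inequality for a dominant rational map is not justified as stated, since $\psi$ could a priori contract a constant-cycle subvariety, and your intermediate claim about $S_\ast A^8(F\times F)$ in terms of $S_\ast A^4(F)$ of the factors is an additional unproved assertion that Voisin's formulation (stated directly in terms of $S_iA^4(F)\otimes S_jA^4(F)$) avoids. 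Either cite \cite[Proposition 4.5 and Corollary 4.9]{V14}, or supply genuine proofs of these inputs; the blow-up of the indeterminacy locus alone does not produce them.
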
 
   
   \begin{proof}
   The first part of the statement follows immediately from the definition of $A^8_{(2j)}(F\times F)$ and of the idempotents $\Pi_Z^8,\ldots,\Pi_Z^{16}$ of Proposition \ref{prop:split}.
   
In \cite[Corollary 4.9]{V14} Voisin shows that $S_kA^8(Z)\cong \psi_*\left(\sum_{i+j=k}S_iA^4(F)\otimes S_jA^4(F)\right)$ 
. Moreover, \cite[Proposition 4.5]{V14} says that $S_1A^4(F)=A^4_{(0)}(F)\oplus A^4_{(2)}(F)$ and that $S_2A^4(F)=A^4_{(0)}(F)$. The combination of these two facts gives the desired isomorphism, as we show in the case $i=1$.
Indeed, $$S_4A^8(Z)\cong \psi_*\left(\sum_{i+j=4}S_iA^4(F)\otimes S_jA^4(F)\right)=\psi_*\left(S_2A^4(F)\otimes S_2A^4(F)\right)=\psi_*(A^4_{(0)}(F)\otimes A^4_{(0)}(F))$$ $$=\psi_*(A^8_{(0)}(F\times F)),$$ $$S_3A^8(Z)\cong \psi_*\left(\sum_{i+j=3}S_iA^4(F)\otimes S_jA^4(F)\right)=\psi_*\left(S_2A^4(F)\otimes S_1A^4(F)+S_1A^4(F)\otimes S_2A^4(F)\right)$$ $$=\psi_*(A^4_{(0)}(F)\otimes A^4_{(0)}(F)\oplus A^4_{(0)}(F)\otimes A^4_{(2)}(F)\oplus A^4_{(2)}(F)\otimes A^4_{(0)}(F) )=\psi_*(A^8_{(0)}(F\times F)\oplus A^8_{(2)}(F\times F)),$$ so that $S_3A^8(Z)/S_4A^8(Z)\cong \psi_*(A^8_{(2)}(F\times F))\cong A^8_{(2)}(Z)$.  
   \end{proof}

 By Theorem \ref{findim},  Proposition \ref{prop:split} and Corollary \ref{cor:split} hold in particular when $Y$ is a cyclic cubic fourfold not containing a plane.
  We now give an interpretation of the piece $A^8_{(0)}(Z)$ 
   of our decomposition:
   
   \begin{proposition}\label{piece0} Let $Y\subset\PP^5$ be a cyclic cubic not containing a plane and let $Z=Z(Y)$ be the \llsvs{} eightfold.  
   One has
     \[ A^8_{(0)}(Z)=\QQ[c_8(T_Z)]=\QQ[h^8]=\QQ[Y^2]=\QQ[ (Z_H)^2]\ ,\]
     where $h\in A^1(Z)$ is a hyperplane section, $Y\in A^4(Z)$ is (the class of) the cubic $j(Y) \subset Z$ and $Z_H\in A^4(Z)$ is (the class of) the variety of twisted cubics contained in a hyperplane section.
      \end{proposition}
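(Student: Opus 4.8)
The plan is to identify the piece $A^8_{(0)}(Z)$ explicitly by transporting everything through Voisin's rational map $\psi\colon F\times F\dashrightarrow Z$, using the description $A^8_{(0)}(Z)=\psi_\ast A^8_{(0)}(F\times F)=\psi_\ast\bigl(A^4_{(0)}(F)\otimes A^4_{(0)}(F)\bigr)$ from Corollary \ref{cor:split}. The decisive input is the Shen--Vial analysis of the Chow ring of the Fano variety: by \cite{SV} one has $A^4_{(0)}(F)=\QQ$, generated by the class $c_4(F)$ of a point (equivalently, by $\ell^4/\deg$ for $\ell\in A^1(F)$ the Pl\"ucker polarization, or by any of the natural degree-$8$ zero-cycles that Shen--Vial show are proportional). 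Hence $A^8_{(0)}(F\times F)=\QQ$, generated by $[\,\mathrm{pt}\times\mathrm{pt}\,]$, and therefore $A^8_{(0)}(Z)=\QQ\cdot\psi_\ast[\mathrm{pt}\times\mathrm{pt}]$ is at most one-dimensional; it is exactly one-dimensional since $\psi$ is dominant, so $\psi_\ast$ of a point is a non-zero multiple of the class of a point in $Z$. This already gives $A^8_{(0)}(Z)=\QQ[\mathrm{pt}_Z]$, and the whole content of the proposition is then to check that each of the four listed cycles $c_8(T_Z)$, $h^8$, $Y^2$, $(Z_H)^2$ is a \emph{non-zero} element of $A^8_{(0)}(Z)$.

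The step of showing membership in $A^8_{(0)}(Z)$ for $h^8$ is immediate: $h\in A^1(Z)=A^1_{(0)}(Z)$ (divisors always sit in degree $0$ of the bigrading, as $H^{2}$ contributes to $\pi^{2}$ and $A^1_{hom}=0$ for $Z$ simply connected with... more directly, $h=(\Pi_Z^{14})_\ast h$ by the cohomological description of the $\Pi_Z^k$), and since the splitting is compatible with intersection product via the factorization through $\psi$ (Corollary \ref{cor:split} together with the multiplicativity on $F$ from Theorem \ref{sv}), $h^8\in A^8_{(0)}(Z)$. For $c_8(T_Z)$: the full Chern class of $Z$ lies in the subring generated by $A^8_{(0)}$-type classes because $Z$ has an MCK-type decomposition (it is of $K3^{[4]}$-type, Theorem \ref{K3m}, so $c_i(T_Z)\in A^i_{(0)}(Z)$), hence $c_8(T_Z)\in A^8_{(0)}(Z)$; and it is non-zero because its degree is the topological Euler characteristic of $Z$, which is positive. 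For $Y=j(Y)\in A^4(Z)$ and $Z_H\in A^4(Z)$: I would argue that both are Lagrangian and that Voisin's analysis of $\psi$ and the geometric descriptions in Subsections \ref{construction llsvs} and \ref{ss:3} show these classes pull back under $\psi$ (or push forward) into the $(0)$-graded part; concretely, one can exhibit $Y$ (resp.\ $Z_H$) as proportional in $A^4_{(0)}(Z)$ to a power of $h$ using that $A^4_{(0)}(Z)$ is controlled by $\psi_\ast(A^2_{(0)}(F)\otimes A^2_{(0)}(F))$ and that $A^2_{(0)}(F)$ is spanned by $\ell^2$ up to the known relations of \cite{SV}. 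Then $Y^2,(Z_H)^2\in A^8_{(0)}(Z)$ automatically, and non-vanishing follows from a positive intersection number (e.g.\ $Y\cdot h^4\neq 0$ forces $Y^2\neq 0$ once $Y$ is a non-zero multiple of $h^4$, or one computes $\deg(Y^2)$ directly).

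Finally, to pin down the equalities (not just proportionalities), note that $A^8_{(0)}(Z)\cong\QQ$ is generated by the class of a point, so \emph{any} two non-zero elements are equal up to a non-zero rational scalar — which is exactly the meaning of the chain of ``$=$'' signs in the statement, since we work with $\QQ$-coefficients and the proposition asserts equality of the $\QQ$-\emph{subspaces} they generate (each equal to $A^8_{(0)}(Z)=\QQ$). So once membership and non-vanishing are established for all four classes, the proposition follows.

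The main obstacle I anticipate is the verification that $Y=j(Y)$ and $Z_H$ genuinely lie in $A^8_{(0)}(Z)$ after squaring — i.e.\ controlling where the classes of these two explicit Lagrangian fourfolds sit in the bigrading. For $h^8$ and $c_8(T_Z)$ this is formal, but for $Y$ and $Z_H$ one must either trace them through the geometry of $\psi$ (using that $Y\hookrightarrow Z$ is the exceptional-type locus and that $Z_H$ is the fixed locus of $g$, both of which have clean descriptions, combined with Voisin's computation that $\psi^\ast\omega_Z=pr_1^\ast\omega_F-pr_2^\ast\omega_F$), or invoke a result identifying these Lagrangian classes with polynomials in $h$ modulo the deeper pieces of the filtration. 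I expect this to be the technical heart of the argument; everything else reduces to the one-dimensionality of $A^8_{(0)}(Z)$ and elementary non-vanishing via intersection numbers.
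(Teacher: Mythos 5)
Your reduction to one-dimensionality plus membership is fine as far as it goes, but the membership step — which you yourself identify as the technical heart — is exactly where the argument breaks, and it breaks already for $h^8$ and $c_8(T_Z)$, not just for $Y^2$ and $(Z_H)^2$. The proposition is stated for an arbitrary cyclic cubic, and in that generality the paper has \emph{no} bigrading $A^i_{(j)}(Z)$ for $i<8$ and no statement that the splitting of Proposition \ref{prop:split}/Corollary \ref{cor:split} is compatible with intersection products: the projectors $\Pi_Z^k$ are only controlled on $A^8(Z)$ and on transcendental cohomology. Your appeal to ``multiplicativity on $F$ from Theorem \ref{sv}'' does not help either, since the bigraded ring structure on $A^\ast(F)$ is only known for \emph{very general} cubics (see Remark \ref{pity}), and your appeal to Theorem \ref{K3m} to get $c_i(T_Z)\in A^i_{(0)}(Z)$ requires $Z$ to be birational to $S^{[4]}$, i.e.\ the Pfaffian hypothesis (Proposition \ref{K3iso}, Proposition \ref{pfaf}) — and even then one must check that the MCK splitting agrees with the $\psi$-induced one, which is a separate lemma in the paper. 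For $j(Y)$ and $Z_H$ you offer only a hope that the geometry of $\psi$ places them in the $(0)$-part; no argument is given.

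The missing idea is the one the paper actually uses: the generalized Franchetta property. Since $\psi_\ast\psi^\ast=6\,\ide$ on $A^8(Z)$ and $A^8_{(0)}(Z)=\psi_\ast A^8_{(0)}(F\times F)$, it suffices to show $\psi^\ast(a)\in A^8_{(0)}(F\times F)$ for each of the four cycles $a$. The key claim is that \emph{any} cycle $\Gamma\in A^i(\FF\times_B\FF)$ on the universal family of Fano varieties restricts, on every fibre, into $A^i_{(0)}(F_b\times F_b)$: applying the relative projectors $\Pi^k_{\FF\times_B\FF}$ with $k\neq 2i$ gives fibrewise homologically trivial classes, which by \cite[Theorem 1.10]{FLV} are fibrewise rationally trivial. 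Since $c_8(T_Z)$, $h$, $Y$ and $Z_H$ are universally defined over the moduli of cubics not containing a plane, and Voisin's map $\psi$ is defined family-wise, their pullbacks are restrictions of universally defined cycles, hence land in $A^8_{(0)}(F\times F)$ — no multiplicative structure on $A^\ast(Z)$ or $A^\ast(F)$ is needed, and all four classes (including $Y^2$ and $(Z_H)^2$) are handled uniformly. Without this (or some substitute supplying both the product-compatibility and the placement of the two Lagrangian classes), your proposal does not prove the statement in the stated generality.
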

            
     \begin{proof} This uses some results concerning the {\em Franchetta property\/} \cite{FLV}. By construction, we have
       \[ A^8_{(0)}(Z)=\psi_\ast A^8_{(0)}(F\times F)\ .\]
       Also, we know that $\psi_\ast\psi^\ast=6\ide\colon A^8(Z)\to A^8(Z)$.
       Thus, to show that some $0$-cycle $a\in A^8(Z)$ lies in the piece $A^8_{(0)}(Z)$, it suffices to prove that 
         \[ \psi^\ast(a)\ \ \in\ A^8_{(0)}(F\times F)  \ .\]
  
      To conclude, we now use the $i=8$ case of the following
     
     \begin{claim} Let $\FF\to B$ denote the universal family of Fano varieties of lines on cubic fourfolds, and let $\Gamma\in A^i(\FF\times_B \FF)$. Then, for any $b\in B$ the restriction
     $\Gamma\vert_b\in A^i(F_b\times F_b)$ lies in $A^i_{(0)}(F_b\times F_b)$.
        \end{claim}    
    
    Since the classes $c_8(T_Z), h, Y, Z_H\in A^\ast(Z)$ are universally defined (i.e.\ they exist as relative cycles in $A^\ast(\ZZZ)$, where $\ZZZ\to B^\circ$ is the universal family of \llsvs{} eightfolds, and $B^\circ\subset B$ is the open parametrizing cubics not containing a plane), and Voisin's rational map can be defined family-wise (cf.\ \cite[Proof of Theorem 1.11]{FLV}), the claim settles Proposition \ref{piece0}.
    
    It remains to prove the claim. Let $$\Pi^k_{\FF\times_B\FF}\in A^{16}(\FF\times_B\FF\times_B\FF\times_B\FF)$$ be relative correspondences that restrict to the product CK decomposition $\Pi^k_{F\times F}$ of (\ref{proddec}) (to see that such relative correspondences exist, cf.\ \cite[Section A.2]{FLV}). Given $\Gamma\in    
    A^i(\FF\times_B \FF)$, let us look at
      \[ (\Pi^k_{\FF\times_B\FF})_\ast(\Gamma)\ \ \in\ A^i(\FF\times_B \FF) \ \ \ (k\not=2i)\ .\]
      (For the formalism of relative correspondences, cf.\ \cite[Chapter 8]{MNP}).
      This is fibrewise homologically trivial, and so \cite[Theorem 1.10]{FLV} implies that this is fibrewise rationally trivial:
        \[    (\Pi_{F\times F}^k)_\ast(\Gamma\vert_b)=\left.\Bigl( (\Pi^k_{\FF\times_B\FF})_\ast(\Gamma)\Bigr){}\right|_b=0\ \ \hbox{in}\ A^i(F_b\times F_b) \ \ \ \forall k\not=2i\ .\]
        This means that
        \[ \Gamma\vert_b\ \ \in\ A^i_{(0)}(F_b\times F_b)\ ,\]
        proving the claim.            
     \end{proof}

   The piece $A^8_{(2)}(Z)$ of the decomposition also admits an easy interpretation:   
   
  \begin{proposition}\label{piece2} Let $Y\subset\PP^5$ be a cyclic cubic not containing a plane and let $Z=Z(Y)$ be the \llsvs{} eightfold. There is a (correspondence-induced) isomorphism
    \[ A^8_{(2)}(Z)\cong A^3_{hom}(Y)\ .\]
   \end{proposition}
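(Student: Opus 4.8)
The strategy is to transport the identification through Voisin's map $\psi$ and then use the known structure of the Chow groups of the Fano variety of lines on a cyclic cubic fourfold. First I would observe that, by Corollary \ref{cor:split}, we have $A^8_{(2)}(Z)=\psi_\ast A^8_{(2)}(F\times F)$, where by definition $A^8_{(2)}(F\times F)=\bigoplus_{k+\ell=2}A^4_{(k)}(F)\otimes A^4_{(\ell)}(F)$. Since $A^4_{(k)}(F)=0$ for $k<0$ and $k$ odd by Theorem \ref{sv}, the only surviving summands are $A^4_{(0)}(F)\otimes A^4_{(2)}(F)$ and $A^4_{(2)}(F)\otimes A^4_{(0)}(F)$. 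Now $A^4_{(0)}(F)=\QQ$ (it is generated by a point lying on a line, cf.\ Shen--Vial), so each of these two summands is isomorphic to $A^4_{(2)}(F)$ as a $\QQ$-vector space, and one checks (using the anti-symplectic symmetry of $\psi$, i.e.\ $\iota\psi(l,l')=\psi(l',l)$, together with $\psi^\ast\omega_Z=pr_1^\ast\omega_F-pr_2^\ast\omega_F$) that $\psi_\ast$ identifies the antidiagonal copy with $A^8_{(2)}(Z)$, so that $A^8_{(2)}(Z)\cong A^4_{(2)}(F)$ via a correspondence.

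Next I would invoke the work of the third author on Chow groups of Fano varieties of cyclic cubic fourfolds, together with the Shen--Vial description of $A^4_{(2)}(F)$. The piece $A^4_{(2)}(F)$ is, by \cite{SV}, the part of $A^4(F)$ on which the Fourier transform acts in the ``middle'' way; concretely it is cut out by the projector $\Pi_F^6$, and under the incidence correspondence $L=\{(l,y)\mid y\in l\}\subset F\times Y$ it is known that $A^4_{(2)}(F)\cong A^3_{hom}(Y)$. Indeed, the universal line defines $P:=[L]\in A^3(Y\times F)$ and its transpose ${}^tP$, and Shen--Vial show these correspondences induce mutually inverse isomorphisms between $A^3_{hom}(Y)$ and $A^4_{(2)}(F)$ (this is essentially their Theorem on the Fourier decomposition, identifying the "$H^4_{tr}$-part'' on both sides; for a cyclic cubic one has finite-dimensionality by Theorem \ref{findim}, which upgrades the cohomological statement to rational equivalence). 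Composing this with the isomorphism from the previous paragraph yields the desired correspondence-induced isomorphism $A^8_{(2)}(Z)\cong A^3_{hom}(Y)$.

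The main obstacle I anticipate is making precise and justifying the isomorphism $A^4_{(2)}(F)\cong A^3_{hom}(Y)$ at the level of Chow groups (not merely cohomology): one has to know that the composite ${}^tP\circ P$ and $P\circ{}^tP$ act as the identity on the relevant pieces, which requires both the Shen--Vial computation of the action of the incidence correspondence on the Fourier-graded pieces and the finite-dimensionality of $F$ (via Theorem \ref{findim}) to kill the error terms that are homologically but perhaps not rationally trivial. A secondary point requiring care is the bookkeeping with the two summands $A^4_{(0)}\otimes A^4_{(2)}$ and $A^4_{(2)}\otimes A^4_{(0)}$: one must verify that $\psi_\ast$ does not collapse them to zero and that the surviving image is exactly one copy of $A^4_{(2)}(F)$, which is where the symplectic-form computation of Proposition \ref{psi} and the compatibility $\iota\psi(l,l')=\psi(l',l)$ of Remark \ref{rmk: compatibility involution} enter — the antisymmetry forces the ``symmetric'' combination into a lower piece and leaves the ``antisymmetric'' one as $A^8_{(2)}(Z)$.
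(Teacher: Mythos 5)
Your overall strategy (transport through Voisin's map $\psi$, use the Fourier decomposition of $A^4(F)$ and finite-dimensionality) is in the right spirit, but the pivotal intermediate claim $A^8_{(2)}(Z)\cong A^4_{(2)}(F)$ is not justified by what you offer. You reduce to the two summands $A^4_{(0)}(F)\otimes A^4_{(2)}(F)$ and $A^4_{(2)}(F)\otimes A^4_{(0)}(F)$ and assert that ``one checks'' via the relations $\iota\psi(l,l')=\psi(l',l)$ and $\psi^\ast\omega_Z=pr_1^\ast\omega_F-pr_2^\ast\omega_F$ that $\psi_\ast$ identifies the antidiagonal copy with $A^8_{(2)}(Z)$. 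Both summands land in $A^8_{(2)}(Z)$ by Corollary \ref{cor:split}, so nothing is ``forced into a lower piece''; what your mechanism really needs is that the symmetric combination dies, i.e.\ that $\iota_\ast=-\ide$ on $A^8_{(2)}(Z)$. But that is precisely (part of) Theorem \ref{main2}, which in this paper is proved only later, by a finite-dimensionality argument of the same depth as the statement you are trying to prove, and only under the Pfaffian hypothesis -- whereas Proposition \ref{piece2} is asserted for all cyclic cubics. Moreover, even granting that only one copy survives, you never address injectivity of $\psi_\ast$ on that copy: $\psi_\ast$ has a priori a nontrivial kernel on $A^8_{(2)}(F\times F)$ (this is the summand $N_k^\prime$ in the proof of Proposition \ref{prop:split}), so an isomorphism requires producing a correspondence acting as a two-sided inverse, i.e.\ a motivic argument, not a symmetry observation on forms. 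Your second step, the isomorphism $A^4_{(2)}(F)\cong A^3_{hom}(Y)$ induced by the universal line, is also not an unconditional statement of \cite{SV} at the level of Chow groups; you do flag this and propose the correct fix (finite-dimensionality via Theorem \ref{findim}), so this part is a fixable gap rather than a wrong step.

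For comparison, the paper avoids the two-copy bookkeeping entirely: it composes correspondence-induced isomorphisms on transcendental cohomology, $H^4_{tr}(Y)\cong H^2_{tr}(F)$ (Beauville--Donagi \cite{BD}), $H^2_{tr}(F)\cong H^2_{tr}(Z)$ via $\psi_\ast(pr_1^\ast-pr_2^\ast)$ (Proposition \ref{psi}, where the antisymmetry you invoke is used, but only cohomologically, where it is harmless), and hard Lefschetz $H^2_{tr}(Z)\cong H^{14}_{tr}(Z)$; by Manin's identity principle this gives an isomorphism of homological motives between $h^4(Y)$ and $h^{14}(Z)(-5)$ up to Lefschetz summands, which is then matched with the Chow motive $(Z,\Pi_Z^{14},0)$ coming from the splitting in Proposition \ref{prop:split}. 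Finite-dimensionality of $h^4(Y)$ and of $(Z,\Pi_Z^{14},0)$ (the latter being a summand of $h(F\times F)$) lifts this to an isomorphism in $\MM_{\rm rat}$, and applying $A^3_{hom}$ gives the proposition. If you want to salvage your route through $A^4_{(2)}(F)$, you would have to replace the symmetry argument by exactly this kind of motivic lifting, at which point the two proofs essentially coincide.
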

   
   \begin{proof} Let $F=F(Y)$ be the Fano variety of lines. As is well--known (cf.\ \cite{BD}), the universal line $L\in A^3(Y\times F)$ induces the isomorphism
     \[ L_\ast\colon\ \ H^4_{tr}(Y)\ \xrightarrow{\cong}\ H^2_{tr}(F)(1)\ .\]
     Proposition \ref{psi} gives a (correspondence-induced) isomorphism
     \[   \psi_\ast (pr_1^\ast -pr_2^\ast)    \colon\ \ H^2_{tr}(F)\ \xrightarrow{\cong}\ H^2_{tr}(Z)\ .\]
     Using hard Lefschetz, we know that there is a (correspondence-induced) isomorphism
      \[ H^2_{tr}(Z)\ \xrightarrow{\cong}\ H^{14}_{tr}(Z)(-6)\ .\]
     
   Composing these isomorphisms (and invoking Manin's identity principle \cite[Section 2.3]{Sc}), we obtain an isomorphism of homological motives
    \[  h^4(Y)\oplus \bigoplus \LLL(\ast)\ \xrightarrow{\cong}\ h^{14}(Z)(-5)\oplus \bigoplus \LLL(\ast)\ \ \ \hbox{in}\ \MM_{\rm hom}\ \]
    (where $\LLL$ denotes the Lefschetz motive). 
    
   Now let us consider the Chow motive $(Z,\Pi_Z^{14},0)$ constructed above. Since from the \splitting{} (\ref{eq-motivesplitting}) we get $H^{14}_{tr}(M^\prime)=0$, we have an isomorphism of homological motives
   $$ (Z,\Pi_Z^{14},0)\oplus \bigoplus \LLL(\ast)\cong h^{14}(Z).$$ Using finite-dimensionality of $h^4(Y)$ and of $(Z,\Pi_Z^{14},0)$, we thus obtain an isomorphism of Chow motives
   \[ h^4(Y)\oplus \bigoplus \LLL(\ast)\ \xrightarrow{\cong}\ (Z,\Pi_Z^{14},-5)\oplus \bigoplus \LLL(\ast)\ \ \ \hbox{in}\ \MM_{\rm rat}\ .\]
   Taking $A^3_{hom}()$ on both sides, this proves the proposition.
      \end{proof} 
  
  For technical reasons, later on we will need to work with some subprojectors of the $\left\{\Pi_{Z}^i\right\}$, which we construct by using the refined CK decomposition introduced in Section \ref{pi20}.

\begin{proposition}\label{prop-refinedCK}
Let $Y\subset\PP^5$ be a cyclic cubic fourfold not containing a plane, let $F=F(Y)$ be the Fano variety of lines and let $Z=Z(Y)$ be the associated \llsvs{} eightfold.
  
  \noindent
  (\rom1) There exist mutually orthogonal idempotents $\lbrace P_{F\times F}^i\rbrace\in A^8(F^4)$ which are subprojectors of the $\{\Pi_{F\times F}^i\}$ of Proposition \ref{prop:split}, such that
    \[ A^8_{(j)}(F\times F) = (P_{F\times F}^{16-j})_\ast A^8(F\times F)\ .\]   
    
 Moreover, the idempotent ${}^t P_{F\times F}^{16-k}$ acts on cohomology as projector on $\sym^{k/2} H^2_{tr}(F\times F)$.    
   
   (\rom2) There exist mutually orthogonal idempotents $\lbrace P_{Z}^i\rbrace\in A^8(Z\times Z)$ which are subprojectors of the $\{\Pi_{Z}^i\}$ of Proposition \ref{prop:split}, such that
    \[ A^8_{(j)}(Z) = (P_{Z}^{16-j})_\ast A^8(Z)\ .\]     
    
    Moreover, the idempotent ${}^t P_Z^{14}$ acts as projector on $H^2_{tr}(Z)$.
\end{proposition}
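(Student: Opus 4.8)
The plan is to refine the product Chow--K\"unneth decomposition (\ref{proddec}) on $F\times F$, isolating the transcendental part of each factor, and then to push the resulting projectors forward to $Z$ along Voisin's map $\psi$, following the proof of Proposition~\ref{prop:split}(\rom2) line by line. First I would refine the Shen--Vial decomposition $\{\Pi_F^i\}$ of Theorem~\ref{sv}. Since $F=F(Y)$ is a hyperk\"ahler fourfold of $K3^{[2]}$--type with finite-dimensional motive (Theorem~\ref{findim}), it satisfies the hypotheses of \cite{V4} used in the proof of Theorem~\ref{pi20}, so \cite[Theorems~1 and 2]{V4} provide a refinement $\Pi_F^i=\sum_j\pi_F^{i,j}$ into mutually orthogonal idempotents, with $\pi_F^{i,j}$ acting on cohomology as the projector onto the $j$-th graded of the niveau filtration and, for $j\geq1$, supported on a product of subvarieties of $F$ too small to act nontrivially on $A_0(F)$ (the dimension count of the last line of the proof of Theorem~\ref{pi20}). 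I will only need the self-dual subprojectors $\pi_F^{8,0}:=\Pi_F^8$, $\pi_F^{6,0}$, $\pi_F^{4,0}$ of $\Pi_F^8,\Pi_F^6,\Pi_F^4$: for $a\in\{0,2,4\}$ one has $(\pi_F^{8-a,0})_\ast A^4(F)=(\Pi_F^{8-a})_\ast A^4(F)=A^4_{(a)}(F)$, and ${}^t\pi_F^{8-a,0}$ acts on cohomology as the projector onto the transcendental part $H^a_{tr}(F)$, which -- using the $K3^{[2]}$--type relation $\sym^{\bullet}H^2(F)\twoheadrightarrow H^{\bullet}(F)$ given by cup product -- I would identify with $\sym^{a/2}H^2_{tr}(F)$.

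Next I would set, parallel to (\ref{proddec}),
\[ P_{F\times F}^{16-j}\ :=\ \sum_{\substack{a+b=j\\ a,b\leq4}}\pi_F^{8-a,0}\times\pi_F^{8-b,0}\ \in\ A^8(F^4)\ .\]
These are mutually orthogonal idempotents, each a subprojector of the corresponding $\Pi_{F\times F}^{16-j}$ of (\ref{proddec}), and, since the differences $\Pi_F^{8-a}-\pi_F^{8-a,0}$ act trivially on $0$-cycles, comparison with (\ref{proddec}) and Proposition~\ref{prop:split}(\rom1) gives $(P_{F\times F}^{16-j})_\ast A^8(F\times F)=\bigoplus_{a+b=j}A^4_{(a)}(F)\otimes A^4_{(b)}(F)=A^8_{(j)}(F\times F)$; this is part~(\rom1). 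The cohomological claim follows from Step~1, since then ${}^tP_{F\times F}^{16-j}$ is the projector onto $\bigoplus_{a+b=j}\sym^{a/2}H^2_{tr}(F)\otimes\sym^{b/2}H^2_{tr}(F)$, which equals $\sym^{j/2}H^2_{tr}(F\times F)$ because $H^2_{tr}(F\times F)=H^2_{tr}(F)\oplus H^2_{tr}(F)$.

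For part~(\rom2) I would repeat the proof of Proposition~\ref{prop:split}(\rom2) with $P_{F\times F}^i$ in place of $\Pi_{F\times F}^i$: the motive $(F\times F,P_{F\times F}^i,0)$ is a direct summand of $h(F)\otimes h(F)$, hence finite-dimensional by Theorem~\ref{findim}, so \cite[Lemma~3.6]{V3} applied to $\bar\Gamma_\psi\colon(F\times F,P_{F\times F}^i,0)\to h(Z)$ yields a direct summand $M_i^P\subseteq h(Z)$, with defining idempotent $P_Z^i$, onto which $\bar\Gamma_\psi$ restricts to a split epimorphism; since $(F\times F,P_{F\times F}^i,0)$ is a submotive of $h^i(F\times F)$, the summand $M_i^P$ is a submotive of $M_i=(Z,\Pi_Z^i,0)$, so $P_Z^i$ is a subprojector of $\Pi_Z^i$, and mutual orthogonality of the $P_Z^i$ on $A^8(Z)$ follows from the same commutative-diagram argument as in Proposition~\ref{prop:split}(\rom2). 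One then computes, as in the proof of Corollary~\ref{cor:split}, that $(P_Z^{16-j})_\ast A^8(Z)=\psi_\ast (P_{F\times F}^{16-j})_\ast A^8(F\times F)=\psi_\ast A^8_{(j)}(F\times F)=A^8_{(j)}(Z)$, which is part~(\rom2). Finally, the statement on ${}^tP_Z^{14}$ follows from the commutative square relating $\psi^\ast$, $\psi_\ast$, ${}^tP_{F\times F}^{14}$ and ${}^tP_Z^{14}$, together with $\psi_\ast\psi^\ast=6\,\ide$ on $H^{\ast}_{tr}(Z)$, the injectivity of $\psi^\ast$, the compatibility of $\psi^\ast$ with the splittings $H^2=\NS\oplus H^2_{tr}$, and the fact that ${}^tP_{F\times F}^{14}$ is the identity on $H^2_{tr}(F\times F)$ and vanishes on $\NS(F\times F)$: these force ${}^tP_Z^{14}$ to be the projector onto $H^2_{tr}(Z)$.

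The part I expect to require the most care is the transport in (\rom2): one must run the summand-extraction of \cite[Lemma~3.6]{V3} functorially, so that the summand of $h(Z)$ produced from $(F\times F,P_{F\times F}^i,0)$ genuinely lies inside the summand $M_i$ already produced from $h^i(F\times F)$ -- equivalently, so that $P_Z^i$ can be chosen with $P_Z^i=\Pi_Z^i\circ P_Z^i\circ\Pi_Z^i$ -- rather than applying the lemma as a black box. A secondary point is the claim in Step~1 that the lower-niveau pieces $\pi_F^{i,j}$ ($j\geq1$) act trivially on $A_0(F)$, which relies on $H^{\ast}(F)$ having the expected Hodge-theoretic shape and hence on the hypothesis that $Y$ contains no plane.
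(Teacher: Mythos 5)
Your overall architecture coincides with the paper's: refine the Shen--Vial projectors on $F$, take products to obtain subprojectors of the $\Pi^i_{F\times F}$ of Proposition \ref{prop:split}(\rom1), and transport them to $Z$ by re-running the splitting construction of Proposition \ref{prop:split}(\rom2) together with the commutative diagrams and $\psi_\ast\psi^\ast=6\,\ide$ on $H^2_{tr}(Z)$; your part (\rom2) and the statement about ${}^tP_Z^{14}$ are essentially the paper's argument, and for the pieces not involving degree $4$ your refinement amounts (up to self-duality of Vial's decomposition) to using ${}^t\Pi_F^{2,0}$, exactly as in the paper's explicit formulas for $P^{14}_{F\times F}$ and $P^{16}_{F\times F}$.

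The genuine gap is in your refinement of the middle piece $\Pi^4_F$. You take Vial's niveau-graded projector $\pi_F^{4,0}$ from \cite{V4} and assert that (its transpose) projects onto a space you then identify with $\sym^2H^2_{tr}(F)$. This identification is false: $\sym^2H^2_{tr}(F)$ is \emph{not} a complement of $\wt N^1H^4(F)$, because it contains a nonzero algebraic class, namely the Hodge class given by the Beauville--Bogomolov form on the transcendental part, which for $K3^{[2]}$-type fourfolds is a rational combination of $c_2(F)$ and products of divisor classes and hence lies in the deepest level of the niveau filtration. Consequently $\sym^2H^2_{tr}(F)\cap \wt N^1H^4(F)\neq 0$, the graded piece $\gr^0_{\wt N}H^4(F)$ has strictly smaller dimension than $\sym^2H^2_{tr}(F)$, and no projector realizing that graded piece (nor its transpose) can have cohomological image equal to $\sym^2H^2_{tr}(F)$. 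So your $P^{8}_{F\times F}$, $P^{10}_{F\times F}$, $P^{12}_{F\times F}$ do not satisfy the ``moreover'' statement of (\rom1), which is precisely the property used downstream in Theorem \ref{main} (Claims \ref{claim6}-type arguments for $j=4,6,8$). This is why the paper does not use a niveau piece of $\Pi^4_F$ at all: it sets $\Pi_F^{4,0,0}:=\Delta^{sm}_F\circ(\Pi^{2,0}_F\times\Pi^{2,0}_F)\circ\Psi$, with $\Psi$ a section of the multiplication map $h^2(F)\otimes h^2(F)\to h^4(F)$ lifted to $\MM_{\rm rat}$ by finite-dimensionality, exactly so that the cohomological image is the full $\sym^2H^2_{tr}(F)$, algebraic classes included; and the agreement of this projector with $\Pi^4_F$ on zero-cycles is then not obtained by a support/dimension count (the difference $\Pi^4_F-\Pi^{4,0,0}_F$ is not a niveau piece with controlled support) but is quoted from \cite[Proof of Proposition 3.8]{nonsymp3}. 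To repair your argument you would have to replace $\pi_F^{4,0}$ by such a multiplicatively constructed projector, at which point your ``too small to act on $A_0$'' justification no longer applies to the degree-$4$ piece and the input from \cite{nonsymp3} becomes necessary.
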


\begin{proof}

\noindent
(\rom1)
We now construct subprojectors of the $\{\Pi_{F\times F}^i\}$ . We start from the refined CK decomposition of Theorem \ref{pi20} $\Pi^2_F=\Pi^{2,0}_F+\Pi^{2,1}_F$ in $A^4(F^2)$ and we consider $\Pi_F^{4,0,0}:= \Delta^{sm}_F\circ (\Pi^{2,0}_F\times \Pi^{2,0}_F)\circ \Psi$, where $\Psi$ is the inverse of $\Delta^{sm}_F:h^2(F)\otimes h^2(F)\rightarrow h^4(F)$ in $\mathcal{M}_{\rm hom}$ (by finite dimensionality we get that it is also an inverse in $\MM_{\rm rat}$).  Recall that $(\Pi_F^{4,0,0})_*(A^4(F))=(\Pi_F^4)_*(A^4(F))=A^4_{(4)}(F)$.

Define now:
\begin{equation*}
\begin{split}
P_{F\times F}^8=&\Pi_F^{4,0,0}\times \Pi_F^{4,0,0},\\ P_{F\times F}^{10}=&{}^t(\Pi^{4,0,0}_F\times \Pi^{2,0}_F+ \Pi^{2,0}_F\times \Pi^{4,0,0}_F ),\\ P_{F\times F}^{12}=&{}^t(\Pi^{4,0,0}_F\times \Pi^{0}_F+ \Pi^{0}_F\times \Pi^{4,0,0}_F +\Pi^{2,0}_F\times \Pi^{2,0}_F),\\ P_{F\times F}^{14}=&{}^t(\Pi^0_F\times \Pi^{2,0}_F+ \Pi^{2,0}_F\times \Pi_F^0),\\ P_{F\times F}^{16}=&\Pi^8_F\times \Pi^8_F={}^t(\Pi^0_F\times \Pi_F^0).
\end{split}
\end{equation*}

To check that the action of $P_{F\times F}^j$ on 0-cycles coincides with the action of $\Pi_{F\times F}^j$, one reduces to the same statement for
$\{\Pi_{F}^j\}_{j=2,4}$ versus $\{ \Pi_F^{2,0},\Pi_F^{4,0,0}\}$. This last statement is proven in \cite[Proof of Proposition 3.8]{nonsymp3}.

Idempotency follows from the idempotency and orthogonality of $\Pi_F^0$, $\Pi_F^{2,0}$ and $\Pi_F^{4,0,0}$. Clearly $P_{F\times F}^{16-j}$ is a subprojector of $\Pi_{F\times F}^{16-j}$, so they are also mutually orthogonal.

Finally, by definition $(\Pi_F^{2,0})_*H^*(F)=H^2_{tr}(F)$, hence also $(\Pi_F^{4,0,0})_*H^*(F)=\sym^2 H^2_{tr}(F)$. As a consequence, we obtain that $({}^t P_{F\times F}^{16-k})_*H^*(F\times F)=\sym^{k/2} H^2_{tr}(F\times F)$.

\noindent
(\rom2)
The $P_Z^i$ are constructed by applying the ``splitting construction'' of Proposition \ref{prop:split}(\rom2) to the $P_{F\times F}^i$.
Since ${}^t P_{F\times F}^{14}$ acts as projector on $H^2_{tr}(F\times F)$, and $\psi_\ast\psi^\ast=6\ide$ on $H^2_{tr}(Z)$, it follows
 that $P_{Z}^{14}$ acts as projector on $\psi_\ast H^2_{tr}(F\times F)=H^2_{tr}(Z)$.
\end{proof}

 \subsection{Pfaffian cubics}\label{subsec-Pfaffians}
 
 We consider now the family of Pfaffian cubic fourfolds. For the general element $Y$ in this family, Beauville and Donagi \cite{BD} have shown that there is an isomorphism between $F(Y)$ and the Hilbert scheme $S^{[2]}$ of the associated K3 surface $S$ of degree 14.
 
 \begin{proposition}\label{K3iso} Let $Y\subset\PP^5$ be a Pfaffian cubic fourfold not containing a plane. Let $S$ be the associated K3 surface of degree $14$ and let $Z=Z(Y)$ be the \llsvs{} eightfold.
 There is an isomorphism of motives 
   \[ h(Z)\cong h(S^{[4]})\ \ \ \hbox{in}\ \MM_{\rm rat}\ .\]
   This induces an isomorphism of Chow rings $A^\ast(Z)\cong A^\ast(S^{[4]})$.
  \end{proposition}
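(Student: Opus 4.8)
The plan is to reduce the statement to the birationality $Z\sim_{\mathrm{bir}}S^{[4]}$ and then to invoke the birational invariance of the Chow motive and of the Chow ring for hyperk\"ahler manifolds. For the birationality, recall (Subsection \ref{construction llsvs}, \cite{AL}) that $Z=Z(Y)$ is a hyperk\"ahler manifold of $K3^{[4]}$--type. By \cite{LLMS} (see also \cite{LPZ}, \cite{Chen}) there is an isomorphism $Z\cong M_\sigma(\mathcal A_Y,v)$ with a moduli space of $\sigma$--stable objects in the Kuznetsov component $\mathcal A_Y\subset D^b(Y)$, for a suitable Bridgeland stability condition $\sigma$ and a primitive Mukai vector $v$ with $\langle v,v\rangle=6$. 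When $Y$ is Pfaffian, Kuznetsov's theorem provides an equivalence $\mathcal A_Y\simeq D^b(S)$ with $S$ the associated degree--$14$ K3 surface; transporting $(\sigma,v)$ along the induced isometry of Mukai lattices identifies $Z$ with a Bridgeland moduli space $M_{\sigma'}(S,v')$, with $v'$ primitive of square $6$. By Bayer--Macr\`i, $M_{\sigma'}(S,v')$ is a smooth projective hyperk\"ahler manifold of $K3^{[4]}$--type which is birational to $M_H(S,(1,0,-3))=S^{[4]}$, the two models being joined by a finite chain of wall--crossings. (Alternatively, one argues Hodge--theoretically: using Proposition \ref{psi} and the Beauville--Donagi isomorphism $F(Y)\cong S^{[2]}$ \cite{BD} one matches the weight--two Hodge structures of $Z$ and $S^{[4]}$ together with their Beauville--Bogomolov forms, and then applies Markman's Torelli theorem for manifolds of $K3^{[n]}$--type.)

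Granting a birational map $Z\dashrightarrow S^{[4]}$, it decomposes into a finite chain of elementary flops. For each flop $X\dashrightarrow X'$ there is a common smooth projective resolution dominating both sides; applying the blow--up formula for Chow motives to the two contractions produces an isomorphism $h(X)\oplus A\cong h(X')\oplus A$ in $\MM_{\rm rat}$, where $A$ is a sum of Tate twists of the motive of the (common) flopping locus. Since the flopping loci are projective bundles over lower--dimensional moduli spaces — in the Mukai--flop case simply projective spaces — the summand $A$ is cancellable: its building blocks are Tate motives, which are always cancellable, and, in the more degenerate configurations, motives that are finite--dimensional (when $Y$ is moreover cyclic this finite--dimensionality follows at once from Theorem \ref{findim} via $F(Y)\cong S^{[2]}$, but for the cancellation of the relevant summands the Tate case suffices). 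Cancelling and composing along the chain we obtain $h(Z)\cong h(S^{[4]})$. Moreover, by Rie\ss's theorem on the Chow rings of birational symplectic varieties the correspondences realizing these flops respect intersection products, so the isomorphism just constructed is one of algebra objects in $\MM_{\rm rat}$, and it therefore induces a graded ring isomorphism $A^\ast(Z)\cong A^\ast(S^{[4]})$.

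The genuinely delicate step is the birationality $Z\dashrightarrow S^{[4]}$: since $Z(Y)$ is not a priori a moduli space of sheaves on $S$, one must go through the derived--categorical description of \cite{LLMS} together with Kuznetsov's Pfaffian equivalence (or else through the Torelli theorem). Everything else is formal, modulo the by now standard facts that the Chow motive and the Chow ring are birational invariants of projective hyperk\"ahler manifolds.
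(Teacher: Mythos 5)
The overall strategy (birationality of $Z$ with $S^{[4]}$ plus birational invariance of motives and Chow rings for hyperk\"ahler manifolds) is the same as the paper's, but your execution of the motivic step has a genuine gap. The paper simply quotes Rie\ss\ \cite{Rie}: for birational projective hyperk\"ahler manifolds her correspondence already induces an isomorphism of Chow motives \emph{and} of Chow rings, so no flop decomposition is needed. Your replacement --- decompose $Z\dashrightarrow S^{[4]}$ into elementary flops with projective-bundle centers, apply the blow-up formula on a common resolution, then ``cancel'' the extra summands --- does not work as written. First, it is not established (and not known in the generality you need) that this birational map is a chain of Mukai-type flops with such centers; Bayer--Macr\`i wall-crossings can be stratified and you have no description of a common resolution to feed into the blow-up formula. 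Second, direct-sum cancellation in $\MM_{\rm rat}$ is not a general fact (Krull--Schmidt type statements fail for Chow motives), and the assertion that Tate summands are ``always cancellable'' is unjustified; cancellation is available under Kimura finite-dimensionality, but for a general (non-cyclic) Pfaffian cubic neither $h(Z)$ nor $h(S^{[4]})$ is known to be finite-dimensional --- finite-dimensionality of K3 motives is open --- so the crutch you mention is not available for the statement as given. Finally, the closing appeal to Rie\ss\ is misplaced: her theorem does not say that the particular correspondences arising from your flops are compatible with products; it directly produces the isomorphism of rings (and motives), which makes the whole decomposition machinery superfluous.

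Concerning the birationality itself: the paper relies on Addington--Lehn \cite{AL}, which requires a genericity hypothesis (the degree-$14$ K3 should contain no line), and then extends to \emph{all} Pfaffian cubics not containing a plane by a spreading argument (\cite[Lemma 3.2]{Vo}); this last step is absent from your proposal. Your derived-categorical route via \cite{LLMS}, \cite{LPZ} and Kuznetsov's equivalence could in principle cover all such cubics directly, but as written it is incomplete: you must compute the Mukai vector transported under the equivalence and show that the resulting Bridgeland moduli space is birational to $S^{[4]}$ itself, not merely deformation equivalent to it --- this requires a lattice-theoretic argument (or Bayer--Macr\`i's birationality criterion), which you only assert. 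The Torelli alternative you sketch is also gappy: for manifolds of $K3^{[n]}$-type with $n\ge 2$, a Hodge isometry of $H^2$ built from Proposition \ref{psi} and \cite{BD} implies birationality only if it is a parallel-transport operator in the sense of Markman, a monodromy condition you do not verify.
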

  
  \begin{proof} For a {\em general\/} Pfaffian cubic $Y$, it is known (cf.\ \cite{AL}) that there is a birational map $Z\dashrightarrow S^{[4]}$ (precisely, the condition on $Y$ is that $Y$ does not contain a plane and $S$ does not contain a line). Thus, for a {\em general\/} Pfaffian cubic the isomorphism of motives and of Chow rings follows from Rie\ss's result \cite{Rie}. A standard spreading argument (cf.\ \cite[Lemma 3.2]{Vo}) then shows that these isomorphisms extend to {\em all\/} Pfaffian cubics.
  \end{proof}

 For cyclic cubics that are Pfaffian, which exist as shown in \cite[Proposition 5.1]{BCS3}, one can improve on Proposition \ref{prop:split}(\rom2):
 
  \begin{proposition}\label{pfaf} Let $Y\subset\PP^5$ be a Pfaffian cyclic cubic fourfold not containing a plane and let $Z=Z(Y)$ be the \llsvs{} eightfold. There exist mutually orthogonal idempotent correspondences $R_Z^8,R_Z^{10},\ldots,R_Z^{16}\in A^8(Z\times Z)$, such that
    \[  (R_Z^{16-j})_\ast A^8(Z)= A^8_{(j)}(Z)\ .\]
    
    Moreover, the idempotent ${}^t R_Z^{16-k}$ acts on cohomology as projector on $\sym^{k/2} H^2_{tr}(Z)$.   
  \end{proposition}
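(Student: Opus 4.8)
The plan is to transport the required structure from the Hilbert scheme $S^{[4]}$, exploiting that $h(Z)\cong h(S^{[4]})$ in $\MM_{\rm rat}$ when $Y$ is Pfaffian (Proposition~\ref{K3iso}).

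First I would note that $Z$ has finite-dimensional motive in this case. Beauville--Donagi's isomorphism $F(Y)\cong S^{[2]}$ holds for the general Pfaffian cubic, and spreading it out (as in the proof of Proposition~\ref{K3iso}) gives $h(F(Y))\cong h(S^{[2]})$ for every Pfaffian $Y$ not containing a plane. By Theorem~\ref{findim}, $h(F(Y))$ is finite-dimensional, hence so is $h(S^{[2]})$; since $h(S)(-1)$ occurs as a direct summand of $h(S^{[2]})$ (it is the contribution of the partition $(2)$ in the de Cataldo--Migliorini decomposition of $h(S^{[2]})$), the K3 surface $S$ has finite-dimensional motive, and therefore so do $S^{[4]}$ and $Z$.

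Next I would construct refined projectors on $S^{[4]}$ and transport them. By Vial's Theorem~\ref{K3m}, $S^{[4]}$ has an MCK decomposition $\{\pi^i_{S^{[4]}}\}$. I would refine the projectors relevant for $A^8$ using the de Cataldo--Migliorini decomposition
  \[ h(S^{[4]})\ \cong\ \bigoplus_{\mu=(1^{a_1}2^{a_2}\cdots)\,\vdash\, 4}\ \Bigl(\,\textstyle\bigotimes_i \sym^{a_i} h(S)\,\Bigr)\bigl(-\textstyle\sum_i(i-1)a_i\bigr)\ ,\]
  together with the Chow--K\"unneth decomposition $h(S)=\mathbf 1\oplus h^2_{alg}(S)\oplus t_2(S)\oplus\LLL^2$ of the (finite-dimensional) K3 motive, where $(t_2(S))_\ast H^\ast(S)=H^2_{tr}(S)$. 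A codimension count shows that the only summands in the formula whose Chow groups reach codimension $8$ are the summands $\bigl(\sym^c t_2(S)\bigr)(-(8-2c))\subset\sym^4 h(S)$ for $c=0,\dots,4$; in particular $A^8(S^{[4]})=A^8(\sym^4 h(S))=\sym^4 CH_0(S)_\QQ$. Letting $R^{16-2c}_{S^{[4]}}\in A^8(S^{[4]}\times S^{[4]})$ be the idempotent projecting onto the $c$-th such summand, and using $CH_0(S)=\QQ o_S\oplus T(S)$ (Beauville--Voisin), one gets $(R^{16-2c}_{S^{[4]}})_\ast A^8(S^{[4]})=\sym^c T(S)=A^8_{(2c)}(S^{[4]})$, while $({}^tR^{16-k}_{S^{[4]}})_\ast$ is the projector onto $\sym^{k/2}H^2_{tr}(S^{[4]})$ by the cohomological description of $t_2(S)$. (These refined projectors can alternatively be extracted from Vial's \cite{V6}.) Transporting along the isomorphism $\Gamma\colon h(Z)\xrightarrow{\sim}h(S^{[4]})$ of Proposition~\ref{K3iso} and setting $R_Z^{16-k}:=\Gamma^{-1}\circ R^{16-k}_{S^{[4]}}\circ\Gamma$, these are mutually orthogonal idempotents in $A^8(Z\times Z)$; since $\Gamma$ induces a Hodge isometry on cohomology matching $H^2_{tr}(S^{[4]})$ with $H^2_{tr}(Z)$, the idempotent ${}^tR_Z^{16-k}$ acts on cohomology as the projector onto $\sym^{k/2}H^2_{tr}(Z)$.

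The main obstacle will be to identify $(R_Z^{16-j})_\ast A^8(Z)$ with $A^8_{(j)}(Z)$ \emph{as defined in Corollary~\ref{cor:split}} --- i.e.\ with the splitting coming from the $\psi$-transported projectors $\Pi_Z$ --- rather than with some a priori different splitting of $A^8(Z)$. The two decompositions of $h(Z)$ are obtained by genuinely different routes, so matching them on $0$-cycles (and not just on cohomology) is the real content. I would handle this by showing that the complementary sub-projectors $P_Z^{16-j}-R_Z^{16-j}$, with $P_Z^{16-j}$ the refined projectors of Proposition~\ref{prop-refinedCK}, are supported on a product of proper subvarieties of $Z$ and hence act as zero on $A^8(Z)$ for dimension reasons (exactly as in \cite[Proof of Proposition~3.8]{nonsymp3}); then $(R_Z^{16-j})_\ast A^8(Z)=(P_Z^{16-j})_\ast A^8(Z)=A^8_{(j)}(Z)$, where finite-dimensionality of $Z$ and Kimura's nilpotence theorem (Theorem~\ref{nilp}) are used to compare the two families of projectors and to arrange $R_Z^{16-j}$ compatibly with $P_Z^{16-j}$. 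An alternative would be to check that, under $F(Y)\cong S^{[2]}$ and $Z\cong S^{[4]}$, Voisin's map $\psi$ is compatible as a correspondence with the natural degree-$6$ rational map $S^{[2]}\times S^{[2]}\dashrightarrow S^{[4]}$, and that the Fourier decomposition of Theorem~\ref{sv} on $F(Y)\cong S^{[2]}$ coincides with the MCK decomposition of $S^{[2]}$; this forces $\psi_\ast A^8_{(2i)}(F\times F)$ and the transported $A^8_{(2i)}(S^{[4]})$ to agree inside $A^8(Z)$.
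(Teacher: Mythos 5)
The construction half of your argument is sound and runs parallel to the paper: finite-dimensionality of $Z$ is obtained exactly as in the paper (via $F(Y)\cong S^{[2]}$ and the K3 surface $S$), and the projectors you extract from the de Cataldo--Migliorini decomposition of $h(S^{[4]})$ refined by $t_2(S)$ are essentially Vial's projectors from \cite{V6}, transported to $Z$ through Proposition \ref{K3iso}; the paper builds the same objects slightly differently (from the refined piece $\pi_Z^{2,0}$ following \cite[Proof of Proposition 3.8]{nonsymp3}, and checks their effect on zero-cycles by lifting to $A^8(S^4)^{\Sy_4}$).

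The genuine gap is at the step you yourself single out as the real content: proving $(R_Z^{16-j})_\ast A^8(Z)=A^8_{(j)}(Z)$, where $A^8_{(j)}(Z)$ is the $\psi$-defined piece of Corollary \ref{cor:split}. Neither of your two suggested strategies is an argument. For the first, there is no reason why $P_Z^{16-j}-R_Z^{16-j}$ should be supported on a product of proper subvarieties: $P_Z^{16-j}$ is transported from $F\times F$ through $\psi$, while $R_Z^{16-j}$ is transported from $S^{[4]}$ through an unrelated (spread-out) motivic isomorphism, so no support property links the two constructions --- the situation is not analogous to \cite[Proposition 3.8]{nonsymp3}, where the compared projectors differ by a term $\pi^{2,1}$ whose support is known by construction. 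Worse, for general $k$ the difference is not even known to be homologically trivial (the remark following Proposition \ref{pfaf} in the paper notes that it is unknown whether ${}^tP_Z^{k}$ acts on cohomology as projector onto $\sym^{k/2}H^2_{tr}(Z)$), and Kimura nilpotence applied to a difference of two non-commuting idempotents does not by itself kill its action on $A^8$. Your second alternative (that Voisin's $\psi$ matches the degree-$6$ addition map $S^{[2]}\times S^{[2]}\dashrightarrow S^{[4]}$ under the Addington--Lehn birational map, and that the Fourier decomposition on $F\cong S^{[2]}$ coincides with the MCK one) rests on substantial unproven geometric claims not available in the cited literature. What is missing is an intrinsic invariant pinning down both splittings of $A^8(Z)$ at once; the paper uses Voisin's filtration $S_\ast$ for this: Corollary \ref{cor:split} identifies the $\psi$-splitting with the graded pieces of $S_\ast$, while by \cite[End of Section 4.1]{V14} the splitting of $A_0$ coming from the MCK decomposition of Theorem \ref{K3m} (equivalently, from your de Cataldo--Migliorini projectors) computes the same filtration. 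Inserting this bridge would repair your proof; without it, you have produced \emph{a} splitting of $A^8(Z)$ with the right cohomological behaviour, but not the one asserted in the proposition.
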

  
  \begin{proof} A first observation  is that the Fano variety $F(Y)$ is isomorphic to a Hilbert square $S^{[2]}$, where $S$ is a K3 surface \cite{BD}. Also, we know that $F(Y)\cong S^{[2]}$ has finite-dimensional motive (Theorem \ref{findim}). This implies that $S$ also has finite-dimensional motive and thus the eightfold $Z\cong S^{[4]}$ has finite-dimensional motive.
  
 Theorem \ref{K3m} now ensures that $Z$ has a self-dual MCK decomposition $\{\pi^j_Z\}$. This induces the same \splitting{} on $0$-cycles:
 
     \begin{lemma} Let $Z$ be as in Proposition \ref{pfaf}.
 The \splitting{} of $A^8(Z)$ induced by the MCK decomposition of Theorem \ref{K3m} coincides with the \splitting{} of Corollary \ref{cor:split}:
  \[ A^8_{(j)}(Z):=(\Pi_Z^{16-j})_\ast A^8(Z)= (\pi^{16-j}_Z)_\ast A^8(Z)\ .\]
 \end{lemma}
 
 \begin{proof}  
For any Hilbert scheme $S^{[m]}$ of a K3 surface $S$, Vial's \splitting{} of $A_0(S^{[m]})$ as in Theorem \ref{K3m} coincides with Voisin's filtration $S_\ast$ (this is observed in \cite[End of Section 4.1]{V14}).
\end{proof}
 
 Next, let us consider the decomposition $\pi^2_Z=\pi^{2,0}_Z+\pi^{2,1}_Z$ of Theorem \ref{pi20}. Using the argument of \cite[Proof of Proposition 3.8]{nonsymp3}, this decomposition induces
 idempotents $T_Z^k\in A^8(Z\times Z)$, $k=0,\ldots,8$, acting on cohomology as projectors on $\sym^{k/2} H^2_{tr}(Z)$. We define $R_Z^k$ to be the transpose
 $R_Z^k:={}^t T_Z^{16-k}$. The support condition for $\pi_Z^{2,1}$ in Theorem \ref{pi20} implies that $(R_Z^{14})_\ast  =(\pi_Z^{14})_\ast\colon A^8(Z)\to A^8(Z)$.
 For $k=8,\ldots, 12$, we use the fact that $A^8(Z)=A^8(S^{[4]})=A^8(S^4)^{\Sy_4}$ (cf.\ \cite{dCM}) to lift the computations to $S^4$, since by construction of Vial \cite{V6} we have a commutative diagram
 \[
 \xymatrix{
 A^8(S^4)^{\Sy_4}\ar@{->}[r]^{(\pi_{S^4}^k)_*}&A^8(S^4)^{\Sy_4}\\
 A^8(Z)\ar@{->}[u]^{\simeq}\ar@{->}[r]^{(\pi_{Z}^k)_*}&A^8(Z)\ar@{->}[u]^{\simeq}
 }
 \]
 and a similar diagram holds for $R_{S^4}^k$ and $R_{Z}^k$ by \cite{nonsymp3}. On $S^4$ it is readily checked that $R_{S^4}^k$ and $\pi_{S^4}^k$ differ only by terms involving $\pi^{2,1}_S$ which is not acting on zero-cycles because of its support property, thus we conclude again that $(R_{S^4}^k)_\ast  =(\pi_{S^4}^k)_\ast\colon A^8({S^4})\to\ A^8({S^4})$
   and so the $R_Z^k$'s do the job.  
  \end{proof}
  
  \begin{remark}
   It remains an open question whether the projectors $R_Z^k$ coincide with the projectors $P_Z^k$ of Proposition \ref{prop-refinedCK}, since it is not clear whether $P_Z^k$ acts on cohomology as projector on $\sym^{k/2} H^2_{tr}(Z)$.   
  \end{remark}

  \begin{remark}
   All the results that we have obtained for Pfaffian cubic fourfolds go through for all other special cubic fourfolds $Y$, in the sense of Hassett \cite{Hassett}, for which the Fano variety of lines $F(Y)$ is isomorphic to $S^{[2]}$, with $S$ the associated K3 surface.
  \end{remark}

  \section{Proofs of main results}\label{section - Main results}
  
 We now prove the main results stated in Section \ref{sec: intro}.

  \begin{proof}[Proof of Theorem \ref{main}] Let $F=F(Y)$ denote the Fano variety of lines on $Y$. We have seen that there is a commutative diagram
    \[ \begin{array}   [c]{ccc} F\times F & \dashrightarrow & Z\\
    &&\\
         \ \ \ \ \  \downarrow{\scriptstyle (g,g)} && \downarrow{\scriptstyle g}\\
         &&\\
          F\times F & \dashrightarrow & Z\\  
          \end{array}\]
          (where the horizontal arrows are Voisin's rational map $\psi$).

  Let us use the shorthand notation
    \[ \Delta_G:= {1\over 3} \sum_{i=1}^3 \Gamma_{g^i}\ \ \ \in\ A^8\bigl((F\times F)\times (F\times F)\bigr)\ ,\] 
    where $G\cong\ZZ_3$ is the group generated by the diagonal action of $g$ on $F\times F$.

    Since $A^8_{(j)}(Z)= \psi_\ast A^8_{(j)}(F\times F)$, the theorem follows once we prove that
    \[ (\Delta_G)_\ast=\ide\colon\ \ A^8_{(j)}(F\times F)\ \to\ A^8(F\times F) \ \ \ \hbox{for}\ j=0\ ,\]     
    and
      \[ (\Delta_G)_\ast=0\colon\ \ A^8_{(j)}(F\times F)\ \to\ A^8(F\times F) \ \ \ \hbox{for}\ j\in\{2,4\}\ .\]     
      In view of Proposition \ref{prop-refinedCK}, this is equivalent to proving that
      \begin{equation}\label{goal}  
         (\Delta_G\circ P_{F\times F}^{16-j})_\ast = \begin{cases}    (P_{F\times F}^{16-j})_\ast\colon\ \ \ A^8(F\times F)\ \to\ A^8(F\times F) & \ \hbox{for}\ j=0\ ,\\
                                                               0\colon\ \ \     A^8(F\times F)\ \to\ A^8(F\times F) & \ \hbox{for}\ j\in\{2,4\}\ .\\
                                                               \end{cases}
                                                               \end{equation}
                                                               
         We first establish a lemma:
         
         \begin{lemma}\label{idem} For any $j$, the correspondence $\Delta_G\circ P^{16-j}$ acts idempotently:
         \[ \bigl((\Delta_G\circ P_{F\times F}^{16-j})^{\circ 2}\bigr){}_\ast=(\Delta_G\circ P_{F\times F}^{16-j})_\ast\colon\ \ \ A^8(F^2)\ \to\ A^8(F^2)\ .\]
         \end{lemma}
         
         \begin{proof} The action of $g$ on $A^4(F)$ respects the Fourier decomposition $A^4_{(j)}(F)$, i.e.
         \[ g_\ast A^4_{(j)}(F)\ \ \subset\ A^4_{(j)}(F)\ \ \ \forall j \]
         by \cite[Proposition 3.2]{nonsymp3}. This implies that
         \[   (P_{F\times F}^{16-j}\circ\Delta_G\circ P_{F\times F}^{16-j})_\ast=(\Delta_G\circ P_{F\times F}^{16-j})_\ast\colon\ \ \ A^8(F^2)\ \to\ A^8(F^2)\ .\]   
         Since $\Delta_G$ is obviously idempotent, this proves the lemma.
               \end{proof}

    Let us start by proving the desired result (\ref{goal}) for $j=0$. Since the projector $P_{F\times F}^{16}$ is just the CK projector $\Pi^{8}_{F}\otimes \Pi^8_F$ of Theorem \ref{sv}, and $g$ acts as the identity on $H^8(F)$, we know that
    \[ \Delta_G\circ P_{F\times F}^{16}=\Delta_G\circ ( \Pi^{8}_{F}\otimes \Pi^8_F)=\Pi^{8}_{F}\otimes \Pi^8_F= P_{F\times F}^{16}\ \ \ \hbox{in}\ H^{16}(F^4)\ .\]
    That is, we have
    \[ \Delta_G\circ P_{F\times F}^{16} - P_{F\times F}^{16}\ \ \ \in\ A^8_{hom}(F^4)\ .\]
    Because $F\times F$ has finite-dimensional motive, there exists $N\in\NN$ such that
    \[ \bigl( \Delta_G\circ P_{F\times F}^{16} - P_{F\times F}^{16}\bigr)^{\circ N} =0\ \ \ \in\ A^8_{}(F^4)\ .\]
    But $P_{F\times F}^{16}$ is idempotent (Theorem \ref{sv}), and $\Delta_G\circ P_{F\times F}^{16}$ acts idempotently on $0$-cycles (Lemma \ref{idem}), hence (taking $N$ odd) we find that
    \[ (\Delta_G\circ P_{F\times F}^{16})_\ast = (P_{F\times F}^{16})_\ast\colon\ \ \ A^4_{}(F^2)\ \to\ A^4(F^2)\ ,\]
    proving (\ref{goal}) for $j=0$.  
    
     Next, let us prove the desired result (\ref{goal}) for $j=2$. We consider the correspondence $ P_{F\times F}^{14}\circ \Delta_G$, and we make the following claim:
    
       \begin{claim}\label{claim2} There is equality
      \[ \Delta_G\circ {}^tP_{F\times F}^{14}=0\ \ \ \hbox{in}\ H^{16}(F^4)\ .\]
      \end{claim}  
      
       The claim implies that $ \Delta_G\circ {}^tP_{F\times F}^{14}  \in A^8_{hom}(F^4)$, and so its transpose $P_{F\times F}^{14}\circ \Delta_G $ is also in $A^8_{hom}(F^4)$. In particular, the composition 
       \[ P_{F\times F}^{14}\circ \Delta_G \circ P_{F\times F}^{14}\]
       is in $A^8_{hom}(F^4)$.
               Using finite-dimensionality, this means that there exists $N\in\NN$ such that
       \[ \bigl(   P_{F\times F}^{14}\circ \Delta_G\circ P_{F\times F}^{14} \bigr)^{\circ N}=0\ \ \ \hbox{in}\ A^8(F^4)\ .\]
       Looking at the action on $0$-cycles, and using the fact that 
       \[ ( P_{F\times F}^{14}\circ \Delta_G\circ P_{F\times F}^{14})_\ast     =     ( \Delta_G\circ P_{F\times F}^{14})_\ast\colon\ \ \ A^8(F^2)\ \to\ A^8(F^2) \]
       (proof of Lemma \ref{idem}), this implies that
       \[          ( \Delta_G\circ P_{F\times F}^{14})_\ast= 0\colon\ \ \ A^8(F^2)\ \to\ A^8(F^2) \ ,\]    
       therefore we have established the desired result (\ref{goal}) for $j=2$.
       
       It remains to prove Claim \ref{claim2}. Since ${}^tP_{F\times F}^{14}$ acts on cohomology as projector on $ H^2_{tr}(F\times F)$, 
       it will suffice to prove that
       \[ (\Delta_G)_\ast (a)=0\ \ \ \hbox{in}\ H^2(F\times F)\ \ \ 
        \ \forall a  \in H^2_{tr}(F\times F)\ .\]
        To this end, we consider the eigenspace decomposition of complex-valued cohomology, and define a subspace
        \[ H:=\bigl\{ a\in H^2_{}(F\times F)\ \vert\  a_\C\in  H^2(F\times F,\C)^\nu \oplus H^2(F\times F,\C)^{\nu^2}\bigr\}\ .\]
        The subspace $H$, together with its complexification, is a Hodge structure containing $p_1^\ast(\omega_F)$ and $p_2^\ast(\omega_F)$. But $H^2_{tr}(F\times F)$ is the smallest Hodge substructure containing these two classes, and so $H^2_{tr}(F\times F)\subset H$. It follows that
        \begin{equation}\label{onH2} a_\C\in H^2(F\times F,\C)^\nu \oplus H^2(F\times F,\C)^{\nu^2}\ \ \forall a\in H^2_{tr}(F\times F)\ ,\end{equation}  
        and Claim \ref{claim2} is proven.  
    
    Let us now do the case $j=4$. We make the following claim:
    
    \begin{claim}\label{claim3} There exists $\Gamma\in A^8(F^4)$ such that
      \[ \Delta_G\circ ( \Delta_{F\times F} -\Gamma)\circ {}^tP_{F\times F}^{12} =0\ \ \ \hbox{in}\ H^{16}(F^4)\ ,\]
      and $\Gamma$ factors as $\Gamma=\Upsilon^{\prime\prime}\circ \Upsilon^\prime $, where $\Upsilon^\prime\in A^7(F^2\times T)$, $\Upsilon^{\prime\prime}\in A^{3}(T\times F^2)$ and $T$ is a (not necessarily connected) surface.
      \end{claim}    
      
     This is sufficient: using the finite-dimensionality of $F^2$, there exists $N\in\NN$ such that
     \[  \Bigl(    \Delta_G\circ ( \Delta_{F\times F} -\Gamma)\circ {}^tP_{F\times F}^{12}        \Bigr)^{\circ N}=0\ \ \ \hbox{in}\ A^{8}(F^4)\ .\]    
     Developing (and using Lemma \ref{idem}), we find that
     \[ \Bigl(  \Delta_G\circ {}^tP_{F\times F}^{12} + \Gamma^\prime\Bigr){}_\ast =0\colon\ \ A^8(F^2)\ \to\ A^8(F^2)\ ,\]
     where $\Gamma^\prime$ is a correspondence that factors over $T$. But $\Gamma^\prime$ does not act on $0$-cycles for dimension reasons, and so
     \[ ( \Delta_G\circ {}^tP_{F\times F}^{12})_\ast =0\ \colon\     A^8(F^2)\ \to\ A^8(F^2)\ .\]
     
     It remains to prove Claim \ref{claim3}. First of all we remark that $H^4(F\times F)^+=H^4(F)^+\otimes H^0(F)\oplus (H^2(F)\otimes H^2(F))^+\oplus H^0(F)\otimes H^4(F)^+$, and by Abel--Jacobi isomorphism we also have $H^2(F)_{}\otimes H^2(F)_{}\simeq H^4(Y)_{}^{\otimes 2}$. Moreover, since $F$ is a fourfold of $K3^{[2]}$-type, we have $H^2(F)^{\otimes 2} = H^4(F)$, thus finally we reduce the study of $H^4(F\times F)^+$ to the study of $H^4(F)^+\simeq (H^4(Y)^{\otimes 2})^+$.
     
 We now apply Shioda--Katsura's construction \cite[Proposition 2.4 and Remark 1.10]{SK}. This gives a rational map from $Y\times Y$ to a cubic eightfold $Y_8$, defined by the equation
 $f(x_0,\ldots,x_4) + f(x_5,\ldots,x_9)=0$, and an injection
   \[ \phi= (\phi_{1},\phi_2)\colon\ \   H^8(Y\times Y)^+ \ \hookrightarrow\ H^{8}(Y_8)\oplus W\ ,\]
   where $W\subset H^{2\ast}(Y_H)^{\oplus 2}$ and $Y_H\subset Y$ is the cubic threefold defined by $f(x_0,\ldots,x_4)=0$. 
   Since the construction of loc. cit. is geometric in nature, $\phi_{}$ and its left-inverse are induced by correspondences $(\Phi_1,\Phi_2)$ resp. $(\Omega_1,\Omega_2)$. It is known that $A^j(Y_8)=\QQ$ for $j\ge 6$ \cite{Otw}, which means that there exists a surface $\Sigma$ such that $\Delta_{Y_8}=\Psi_1\circ \Theta_1 $, where $\Psi_1$ (resp. $\Theta_1$) is a correspondence from $Y_8$ to $\Sigma$ (resp. from $\Sigma$ to $Y_8$). Analogously, since $H^{2\ast}(Y_H)$ is algebraic, we obtain a correspondence $\Psi_2$ (resp. $\Theta_2$) from the copies of $Y_H$ giving rise to $W$ to a finite number of points (resp. vice versa). We can now define 
   \[\widetilde{\Upsilon}^{\prime}=\Psi_1\circ \Phi_1+\Psi_2\circ \Phi_2\in A^\ast(Y^2\times \widetilde{T}), \quad \widetilde{\Upsilon}^{\prime\prime}=\Omega_1\circ \Theta_1+\Omega_2\circ \Theta_2\in A^{\ast}(\widetilde{T}\times Y^2)\]
   where $\widetilde{T}$ is the union of several copies of $\Sigma$ and of copies of $\mathbb{P}^2$ representing the points above.
   Finally, composing with the Abel--Jacobi correspondence we obtain correspondences $\Upsilon^\prime\in A^7(F^2\times T)$, $\Upsilon^{\prime\prime}\in A^{3}(T\times F^2)$. Defining
   $\Gamma:=\Upsilon^{\prime\prime}\circ \Upsilon^\prime $, we have that
   \[      \begin{split} \Bigl(\Delta_G\circ ( \Delta_{F\times F} -\Gamma)\circ {}^tP_{F\times F}^{12}    \Bigr){}_\ast H^\ast(F\times F) &=    \Bigl(\Delta_G\circ ( \Delta_{F\times F} -\Gamma)\Bigr){}_\ast H^2(F\times F)^{\otimes 2}\\
                   &=   (\Delta_G){}_\ast  ( H^\perp )\ ,\\
                       \end{split}\]
                    where $H^\perp\subset  H^2(F\times F)^{\otimes 2}$ is the complement to $( H^2(F\times F)^{\otimes 2})^+$. Since the complexification of $H^\perp$ is the direct sum of the eigenspaces 
                    $ (H^2(F\times F,\C)^{\otimes 2})^\nu$ and     $ (H^2(F\times F,\C)^{\otimes 2})^{\nu^2}$, we see that $\Delta_G$ acts as zero, as claimed. \qedhere                                                                      
  \end{proof}

  \begin{proof}[Proof of Theorem \ref{main2}]
  The first thing to prove is that the involution $\iota$ respects the \splitting{} $A^8(Z)=\oplus A^8_{(j)}(Z)$ (this is the analogue of Lemma \ref{idem}).
  
  \begin{lemma}\label{OK} Let $Y\subset\PP^5$ be a cyclic cubic not containing a plane and let $Z=Z(Y)$ be the associated \llsvs{} eightfold.  Let $\iota\in\aut(Z)$ be the involution of Remark \ref{rmk: compatibility involution}. Then
    \[ \iota^\ast A^8_{(j)}(Z)\ \ \subset\ A^8_{(j)}(Z)\ \ \ \forall j\ .\]
    \end{lemma}
    
    \begin{proof} We have seen (Section \ref{ss:psi}, Equation \eqref{rmk: compatibility involution}) that there is a commutative diagram
      \[ \begin{array}[c]{ccc} 
            F\times F & \dashrightarrow & Z\\
            &&\\
         \ \ \ \ \  \downarrow{\scriptstyle \iota_F} && \downarrow{\scriptstyle \iota}\\
         &&\\
          F\times F & \dashrightarrow & Z\\  
          \end{array}\]
          where the horizontal arrows are Voisin's rational map $\psi$ and $\iota_F\colon F\times F\to F\times F$ is the map exchanging the two factors.    
          
          We have also seen that
          \[ A^8_{(j)}(Z)=\psi_\ast \left( \bigoplus_{k+\ell=j} A^4_{(k)}(F)\otimes A^4_{(\ell)}(F)\right) \ .\]
          Since $\iota_F$ respects the \splitting{} on the right-hand side, it follows that $\iota$ respects the \splitting{} on the left-hand side. 
     \end{proof}
   
   The proof of Theorem \ref{main2} is now similar to the one of Theorem \ref{main}. We introduce correspondences
    \[    \Gamma_j:= {1\over 2}\, \bigl( \Gamma_{\iota}-(-1)^{j/2}\Delta_{Z}\bigr)\circ R_Z^{16-j}\ \ \ \in A^8(Z\times Z)\ \ \ \ (j=0,2,4,6,8)\ ,      \]
    where the $R_Z^k$'s are as in Proposition \ref{pfaf}.
    One proceeds to check that each $\Gamma_j$ is homologically trivial (this is easy, as one only needs to understand the action of $\iota$ on $\sym^\ast H^2_{tr}(Z)$). Then, using finite-dimensionality of $Z$, one finds that
     \[ (\Gamma_j)^{\circ N}=0\ \ \ \hbox{in}\ A^8(Z\times Z)\ ,\]
     for some $N\in\NN$. Using the fact that 
     \[  (\Gamma_j)_\ast(\Gamma_j)_\ast=(\Gamma_j)_\ast\colon\ \ A^8(Z)\ \to\ A^8(Z) \]
     (proof of Lemma \ref{OK}), we find that
     \[ (\Gamma_j)_\ast=0\colon\ \ A^8(Z)\ \to\ A^8(Z) \ ,\]
     which proves Theorem \ref{main2}.
      \end{proof}
  
  \begin{remark}\label{general} It makes sense to ask whether Theorem \ref{main2} can be extended to the general \llsvs{} eightfold. To obtain such an extension, the first problem is to construct a ``good'' \splitting{} of $A^8(Z)$ (in the sense that it is related to the filtration of \cite{V14}, and comes from a CK decomposition). The second problem is to show that $\iota$ acts in the expected way (this seems difficult without knowing $Z$ has finite-dimensional motive).
  \end{remark}

  \section{Some consequences}\label{section - Consequences}

\subsection{Constant cycle subvarieties}

The results of Sections \ref{ss:vois} and \ref{section - Main results} give interesting consequences on the existence of {\em constant cycle subvarieties\/} in some LLSvS variety $Z$, which are by definition subvarieties whose points are all rationally equivalent in $Z$ to a given $0$-cycle (see \cite{Huybrechts} and \cite{V14} for further details).

\begin{enumerate}
\item It follows from Corollary \ref{cor:split} that $A^8_{(0)}(Z)\cong\QQ$ and so there is a ``canonical $0$-cycle'' on $Z$. We remark that on the Fano variety of lines of any cubic fourfold $X$ there is a well-known constant cycle surface, constructed by Voisin in \cite[Lemma 2.2]{V12}: it is enough to choose $W=X\cap H$ to be a hyperplane section of $X$ with five nodes and to consider the Fano surface $F(W)$ of lines on $W$. It follows from \cite[Corollary 4.9]{V14} that the closure in $Z$ of the image via $\psi$ of $(F(W)\times F(W))\setminus I$ is again a constant cycle fourfold, which by construction is contained in the image $Z_H\subset Z$ via $u$ of the variety of twisted cubics contained in $W$.
 
 It is possible to be more precise: indeed,  for the eightfold $Z$ the canonical $0$-cycle is the degree $1$ generator of $\QQ[c_8(T_Z)]\subset A^8(Z)$, which is $\frac{1}{25650}c_8(T_Z)$ in $A^8(Z)$. 
 In view of Corollary \ref{cor:split}, any point $x\in S_4(Z)$ is such a degree $1$ generator (the locus $S_4(Z)$ is of dimension $4$, as proven by Voisin \cite[Corollary 4.9]{V14}).
 
 \item  Moreover, let $Z=Z(Y)$ be the \llsvs{} eightfold associated to a cyclic cubic fourfold $Y$ not containing a plane. As before, let $Z_H\subset Z$ denote the Lagrangian submanifold parametrizing twisted cubics contained in the hyperplane section $Y_H = Y\cap \{x_5=0\}$.
  Let us consider points
  \[ x\ \ \in\ Z_H\cap S_2(Z)\ .\]
  Since $x\in Z_H=\hbox{Fix}(g)$, Theorem \ref{main} implies that (the class of) $x$ is in $A^8_{(0)}(Z)\oplus A^8_{(6)}(Z)\oplus A^8_{(8)}(Z)$.
  Since $x\in S_2(Z)$, we know from Corollary \ref{cor:split} that (the class of) $x$ is in $\oplus_{j\le 2} A^8_{(2j)}(Z)$.
  It follows that $x\in A^8_{(0)}(Z)\cong\QQ$, i.e.\ irreducible components of $Z_H\cap S_2(Z)$ are constant cycle subvarieties.      
  
  Similarly, let $Z=Z(Y)$ be the \llsvs{} eightfold associated to a cyclic and Pfaffian cubic fourfold $Y$ not containing a plane. Theorems \ref{main} and \ref{main2}, combined with Corollary \ref{cor:split},
  imply that any point
   \[ x\ \ \in\ \hbox{Fix}(g)\cap \hbox{Fix}(\iota) \cap S_1(Z) \]
   has class in $A^8_{(0)}(Z)\cong\QQ$. 
\end{enumerate}

 \subsection{Intersection product}
 
 Recall 
 that the Chow groups of a quotient variety have an intersection product. In this subsection we study this product for certain quotients of the \llsvs{} eightfold $Z$.

 \begin{proof}[Proof of Corollary \ref{cor5}] The Pfaffian hypothesis ensures that the motive of $Z$ is isomorphic to the motive of the Hilbert scheme $S^{[4]}$, where $S$ is the associated K3 surface of degree $14$, as shown in Proposition \ref{K3iso}.
 In particular, it follows that the Chow ring of $Z$ has a bigrading given by an MCK decomposition (Theorem \ref{K3m}).

 Since the motive of $Z$ is isomorphic to the motive of the Hilbert scheme $S^{[4]}$, there is a ``hard Lefschetz type'' isomorphism
  \begin{equation}\label{hard}  \cdot h^3\colon\ \ \ A^2_{(2)}(Z)\ \xrightarrow{\cong}\ A^8_{(2)}(Z)\ ,\end{equation}
  for any ample divisor class $h\in A^1(Z)$ (see \cite[Corollary 3.2]{EPWcube}). Taking a $g$-invariant ample class $h$, this gives in particular an isomorphism
  \[  \cdot h^3\colon\ \ \ A^2_{(2)}(Z)^{\langle g \rangle}\ \xrightarrow{\cong}\ A^8_{(2)}(Z)^{\langle g \rangle}\ ,\] 
  and so Theorem \ref{main} implies that
  \[  A^2_{(2)}(Z)^{\langle g \rangle}=0\ .\]
  A similar reasoning also shows that
   \[ A^2(Z)^{\langle g \rangle}\ \subset\ A^2_{(0)}(Z)\ .\]
   Indeed, let $b\in A^2(Z)^{\langle g \rangle}$, and let $h\in A^1(Z)$ be once again a $g$-invariant ample class. Then $b\cdot h^3\in A^8(Z)^{\langle g \rangle}$ and so (in view of Theorem \ref{main}) we know that $b\in A^8_{(0)}(Z)$. Writing the decomposition $b=b_0+b_2$, where $b_j\in A^2_{(j)}(Z)$, and invoking the isomorphism (\ref{hard}), we see that $b_2=0$.
   
   The corollary is now readily proven. For instance, the image of the intersection map
   \[ \ima \bigl( A^4(Z)^{\langle g \rangle}\otimes A^2(Z)^{\langle g \rangle}\otimes A^2(Z)^{\langle g \rangle}\ \to\ A^8(Z)\bigr) \]   
   is contained in
   \[ \ima \left(  \left( \bigoplus_{j\le 4} A^4_{(j)}(Z)\right)\otimes A^2_{(0)}(Z)\otimes A^2_{(0)}(Z)\ \to\ A^8(Z)\right) \ \ \subset\  \bigoplus_{j\le 4} A^8_{(j)}(Z)\ .\]
   On the other hand, the image is obviously $\langle g \rangle$-invariant, and so
   \[     \ima \bigl( A^4(Z)^{\langle g \rangle}\otimes A^2(Z)^{\langle g \rangle}\otimes A^2(Z)^{\langle g \rangle}\ \to\ A^8(Z)\bigr)\ \ \subset\ A^8(Z)^{\langle g \rangle}\ \ \subset\ A^8_{(0)}(Z)\oplus A^8_{(6)}(Z) \oplus A^8_{(8)}(Z)\]   
   (where the second inclusion is Theorem \ref{main}). It follows that
     \[     \ima \bigl( A^4(Z)^{\langle g \rangle}\otimes A^2(Z)^{\langle g \rangle}\otimes A^2(Z)^{\langle g \rangle}\ \to\ A^8(Z)\bigr)\ \ \subset\ A^8_{(0)}(Z)\cong\QQ\ . \]   
     The other maps are treated similarly.
 \end{proof}
 
 We can also prove the following result concerning $1$-cycles.
 
  \begin{corollary}\label{cor5half} Let $Y\subset\PP^5$ be a Pfaffian cyclic cubic not containing a plane and let $Z=Z(Y)$ be the associated \llsvs{} eightfold. Let 
    $ Q:=Z/\langle g \rangle$ be the quotient, where
  $g\in\aut(Z)$ is the automorphism of order $3$ of Section \ref{sec: intro}. Then the image
   \[ \ima \bigl( A^2(Q)\otimes A^2(Q)\otimes A^2(Q)\otimes A^1(Q)\ \to\ A^7(Q)\bigr) \]
   injects into cohomology via the cycle class map.
                      \end{corollary}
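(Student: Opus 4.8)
The plan is to show that the image in question is contained in the degree-$(0)$ piece $A^7_{(0)}(Z)^{\langle g\rangle}$ of the bigrading, and that the cycle class map is injective there. Since $Q=Z/\langle g\rangle$ is a projective quotient variety, $A^\ast(Q)=A^\ast(Z)^{\langle g\rangle}$ and $H^\ast(Q)=H^\ast(Z)^{\langle g\rangle}$, compatibly with cycle class maps and with the intersection product (Proposition \ref{quot}), so it suffices to work inside $A^\ast(Z)$. As in the proof of Corollary \ref{cor5}, the Pfaffian hypothesis provides (via Proposition \ref{K3iso} and Theorem \ref{K3m}) a multiplicative bigrading $A^\ast_{(\ast)}(Z)$; moreover $A^1(Z)=A^1_{(0)}(Z)$ (for a hyperk\"ahler variety there are no homologically trivial divisors), and the argument in the proof of Corollary \ref{cor5} --- using the hard Lefschetz isomorphism (\ref{hard}) together with Theorem \ref{main} --- shows that $A^2(Z)^{\langle g\rangle}\subseteq A^2_{(0)}(Z)$. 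By multiplicativity of the bigrading I would then conclude
\[ \ima\bigl(A^2(Q)^{\otimes 3}\otimes A^1(Q)\ \to\ A^7(Q)\bigr)\ \subseteq\ A^2_{(0)}(Z)\cdot A^2_{(0)}(Z)\cdot A^2_{(0)}(Z)\cdot A^1_{(0)}(Z)\ \subseteq\ A^7_{(0)}(Z)^{\langle g\rangle}\ .\]

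It remains to check that the cycle class map is injective on $A^7_{(0)}(Z)$. Since $Z$ has the Chow motive of $S^{[4]}$ (Proposition \ref{K3iso}), there is, for any ample class $h\in A^1(Z)$, a ``hard Lefschetz type'' isomorphism
\[ \cdot\, h^6\colon\ \ A^1_{(0)}(Z)\ \xrightarrow{\ \cong\ }\ A^7_{(0)}(Z)\ ,\]
the codimension-$(1,7)$ analogue of (\ref{hard}) (cf.\ \cite[Corollary 3.2]{EPWcube}; it is obtained by lifting the cohomological hard Lefschetz isomorphism to Chow motives exactly as in the proof of Proposition \ref{piece2}, using finite-dimensionality of the motive of $Z$). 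Now let $\alpha\in A^7_{(0)}(Z)$ be homologically trivial and write $\alpha=h^6\cdot\beta$ with $\beta\in A^1_{(0)}(Z)=A^1(Z)$. Then $h^6\cup[\beta]=[\alpha]=0$ in $H^{14}(Z)$, so $[\beta]=0$ in $H^2(Z)$ by hard Lefschetz in cohomology; since $A^1_{hom}(Z)=0$ this forces $\beta=0$, hence $\alpha=0$. Therefore $A^7_{(0)}(Z)\hookrightarrow H^{14}(Z)$, and a fortiori the image computed above injects, via the cycle class map of $Q$, into $H^{14}(Z)^{\langle g\rangle}=H^{14}(Q)$.

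The two ingredients that require some care --- the inclusion $A^2(Z)^{\langle g\rangle}\subseteq A^2_{(0)}(Z)$ and the hard Lefschetz isomorphism $\cdot h^6\colon A^1_{(0)}(Z)\xrightarrow{\cong}A^7_{(0)}(Z)$ --- both appear, in entirely parallel form, in the proof of Corollary \ref{cor5} and in Proposition \ref{piece2}, so I do not expect a genuinely new obstacle; as elsewhere in this section, the Pfaffian hypothesis is used only to have the bigrading on all of $A^\ast(Z)$, while the cyclic hypothesis enters through Theorem \ref{main}. Note that one really needs the refinement $A^2(Q)\subseteq A^2_{(0)}(Z)$ (and not merely $A^2(Q)\subseteq A^2_{(0)}(Z)\oplus A^2_{(2)}(Z)$): the pieces $A^7_{(2)}(Z),A^7_{(4)}(Z),A^7_{(6)}(Z)$ need not inject into cohomology, so the whole point is to land in $A^7_{(0)}(Z)$.
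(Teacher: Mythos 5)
Your argument is correct, and its main reduction is exactly the paper's: since $A^1(Z)=A^1_{(0)}(Z)$ and (by the hard Lefschetz argument of Corollary \ref{cor5}) $A^2(Z)^{\langle g\rangle}\subset A^2_{(0)}(Z)$, multiplicativity of the bigrading coming from Proposition \ref{K3iso} and Theorem \ref{K3m} places the image inside $A^7_{(0)}(Z)$, and one then only needs injectivity of the cycle class map on $A^7_{(0)}(Z)$. The divergence is in how that last input is obtained: the paper simply quotes Vial's result that $A^{7}_{(0)}$ of a variety with the motive of $S^{[4]}$ injects into $H^{14}$ (\cite[Introduction]{V6}), whereas you reprove it via a Chow-level hard Lefschetz isomorphism $\cdot h^6\colon A^1_{(0)}(Z)\xrightarrow{\cong}A^7_{(0)}(Z)$ lifted from cohomology. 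Your route does work, but note that the lifting needs more than finite-dimensionality: to promote $\cup h^6\colon H^2(Z)\to H^{14}(Z)$ to an isomorphism of \emph{homological} motives $h^2(Z)\cong h^{14}(Z)(6)$ one must know the inverse Lefschetz operator is algebraic, i.e.\ the Lefschetz standard conjecture for $Z$ (available here by Charles--Markman \cite{CM}, since $Z$ is of $K3^{[4]}$-type); only then does Kimura nilpotence upgrade it to an isomorphism in $\MM_{\rm rat}$, exactly as in Proposition \ref{piece2} and \cite[Corollary 3.2]{EPWcube}. You should make that invocation explicit, and be aware that the crucial point is the \emph{surjectivity} of $\cdot h^6$ onto $A^7_{(0)}(Z)$, which is precisely what the motivic isomorphism buys (it is not formal from the grading). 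The trade-off: the paper's citation is shorter, while your argument is more self-contained and in fact yields the slightly stronger statement $A^7_{(0)}(Z)=h^6\cdot A^1(Z)$.
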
    
                      
              \begin{proof} Let $G:=\langle g \rangle$. As we have just seen (proof of Corollary \ref{cor5}), $Z$ has an MCK decomposition and $A^\ast(Z)$ has a bigrading. Since $A^i(Z)^{G}$ is equal to $A^i_{(0)}(Z)^{G}$ for $i=1,2$, there is an inclusion
  \[ \ima \bigl( A^2(Z)^G\otimes A^2(Z)^G\otimes A^2(Z)^G\otimes A^1(Z)^G\ \to\ A^7(Z)\bigr) \ \ \subset\ A^7_{(0)}(Z)\ .\]
 However, the subgroup $A^7_{(0)}(Z)\subset A^7(Z)$ is known to inject into $H^{14}(Z)$ via the cycle class map (cf.\ \cite[Introduction]{V6}).              
              \end{proof}

 \begin{remark}\label{pfaffian} Let $Y\subset\PP^5$ be any cyclic cubic fourfold (not necessarily Pfaffian) and let 
   $ Q:=Z(Y)/\langle g \rangle$
   be the quotient under the group generated by the order $3$ automorphism $g$. Conjecturally, the subring of $A^\ast(Q)$ generated by $A^1(Q), A^2(Q)$ (and the pushforwards of the Chern classes of $Z(Y)$) should inject into cohomology.
 
To prove this for cyclic non-Pfaffian cubics, we run into the problem that we don't know whether $Z(Y)$ has an MCK decomposition. To prove the full conjecture for Pfaffian cubics, we run into the problem that it is not known whether $A^i_{(0)}(Z(Y))$ injects into cohomology, except for $i=7,8$.
\end{remark}

 \begin{proof}[Proof of Corollary \ref{cor6}] Similar to Corollary \ref{cor5}. 
 \end{proof}

 \begin{remark} Let $Y\subset\PP^5$ be any (not necessarily cyclic, nor Pfaffian) cubic not containing a plane. Let $Z=Z(Y)$ be the associated \llsvs{} eightfold and let $\iota\in\aut(Z)$ be the anti-symplectic involution. One expects that the subring
   \[  \langle A^1(Z), A^2(Z)^\iota,  c_j(T_Z)\rangle\ \ \subset\ A^\ast(Z) \]
   injects into cohomology (this is the Beauville--Voisin conjecture for $Z$, extended by adding $A^2(Z)^\iota$ which is supposed to lie in $A^2_{(0)}(Z)$).
   It would be interesting to prove this for cases not covered by Corollary \ref{cor6}. 
  \end{remark}

 \vskip1cm
\begin{nonumberingt} The authors are grateful to the organisers of the ``Workshop on Algebraic Geometry'' held in Milano (November 2017), where the first idea of this project came up. We would also like to express our gratitude to Christian Lehn and to Manfred Lehn for various explanations concerning their work. Moreover, we would like to thank the Max Planck Institute of Mathematics in Bonn, where this paper was completed, for providing a warm working environment. Finally, the authors want to express their gratitude to the referee for his/her precious comments and corrections.

\end{nonumberingt}

\vskip1cm

\end{document}